\documentclass[10pt,reqno]{amsart}
\usepackage[british]{babel}
\usepackage{amsmath,amsfonts,amssymb,amstext,amsbsy,amsopn,amsthm,amsxtra,amscd} 
\usepackage{graphicx}
\usepackage{pdfpages}
\usepackage{caption}
\usepackage{subcaption}
\usepackage{color}
\usepackage{hyperref}
\usepackage{xcolor}
\usepackage{enumerate}


\numberwithin{equation}{section}

\newtheorem{theorem}{Theorem}[section]
\newtheorem{proposition}[theorem]{Proposition}
\newtheorem{corollary}[theorem]{Corollary}
\newtheorem{lemma}[theorem]{Lemma}
\newtheorem{definition}[theorem]{Definition}
\newtheorem{remark}[theorem]{Remark}
\newtheorem{claim}[theorem]{Claim}

\newtheorem{notation}[theorem]{Notation}
\newtheorem{conjecture}[theorem]{Conjecture}

\newcommand{\bb}[1]{\mathbb{#1}}

\DeclareMathOperator{\supp}{Supp}
\DeclareMathOperator{\conv}{Conv}
\DeclareMathOperator{\ann}{Ann}

\DeclareMathOperator{\dist}{Dist}
\DeclareMathOperator{\nexpl}{NEL}
\DeclareMathOperator{\nexpd}{ONED}
\DeclareMathOperator{\pat}{Pat}

\newcommand{\sob}[1]{\ensuremath{{\mathbin |}\raise-.4ex\hbox{$#1$}}}

\title[ON PER. DECOMP., ONE-SIDED NONEX. DIR. AND NIVAT'S CONJECTURE]{ON PERIODIC DECOMPOSITIONS, ONE-SIDED NONEXPANSIVE DIRECTIONS AND NIVAT'S CONJECTURE}
\author{Cleber Fernando Colle}
\address{Rio de Janeiro State University, Rio de Janeiro, Brazil}
\email{cleber.colle@ime.uerj.br}

\keywords{\(\bb{Z}^2\)-subshifts, Nonexpansive subdynamics, Complexity, Periodicity.}


\begin{document}
\begin{abstract}
Nivat's conjecture is a famous open problem in symbolic dynamics. The existence of nonexpansive lines that when endowed with a given orientation are one-sided nonexpansive directions is at the heart of some advances. In his Ph.D. thesis, Michal Szabados conjectured that for a minimal periodic decomposition the nonexpansive lines are exactly the lines that contain a period of some periodic configuration in such decomposition. In this paper, we provide conditions where (i) Szabados's conjecture holds and (ii) a given line is nonexpansive if and only if the same line endowed with a given orientation is a one-sided nonexpansive direction. As a corollary of our main result, we get that Nivat's conjecture holds for low convex complexity configurations if and only if it holds for low convex complexity configurations satisfying (i) and (ii).
	
\end{abstract}

\maketitle 

\section{Introduction and preliminaries}

\subsection{Periodicity and complexity} 

Let $\mathcal{A}$ be an alphabet with at least two elements. The elements of $\mathcal{A}^{\bb{Z}^d}$, called \emph{configurations}, have the form $\eta = (\eta_{g})_{g \in \bb{Z}^d}$, where $\eta_g \in \mathcal{A}$ for all $g \in \bb{Z}^d$. For each \(u \in \bb{Z}^d\), we define the \emph{shift application} by \((T^u\eta)_g = \eta_{g+u}\) for all \(g \in \bb{Z}^d\). 

A configuration $\eta \in \mathcal{A}^{\bb{Z}^d}$ is said to be \emph{periodic} if there exists a vector $h \in (\bb{Z}^d)^*$, called \emph{period of $\eta$}, such that $\eta_{g+h} = \eta_g$ for all $g \in \bb{Z}^d$. If \(\eta\) has \(d\) periods linearly independents over \(\bb{R}^d\), we say that \(\eta\) is \emph{fully periodic}.

From now on, we will suppose that $\mathcal{A}$ is a finite alphabet. Let $\mathcal{S} \subset \bb{Z}^d$ be a non-empty, finite set. For a configuration $\eta \in \mathcal{A}^{\bb{Z}^d}$, the \emph{$\mathcal{S}$-com\-plexity of $\eta$}, denoted by $P_{\eta}(\mathcal{S})$, is defined to be the number of distinct elements in $\mathcal{A}^{\mathcal{S}}$ of the form $(T^u\eta)\sob{\mathcal{S}}$, called \emph{\(\mathcal{S}\)-patterns of \(\eta\)}, where $\cdot \sob{\mathcal{S}}$ means the restriction to the set $\mathcal{S}$, i.e., for a con\-fig\-u\-ra\-tion $\vartheta \in \mathcal{A}^{\bb{Z}^d}$, \(\vartheta\sob{\mathcal{S}} := (\vartheta_g)_{g \in \mathcal{S}}\). If we define \[\pat(\mathcal{S},\eta) := \left\{(T^u\eta)\sob{\mathcal{S}} : u \in \bb{Z}^d\right\},\] then $P_{\eta}(\mathcal{S}) = |\pat(\mathcal{S},\eta)|$. We say that $\eta$ has \emph{low complexity} if $P_{\eta}(\mathcal{S}) \leq |\mathcal{S}|$ holds for some non-empty, finite set $\mathcal{S} \subset \bb{Z}^d$. If we have in addition that \(\mathcal{S}\) is a convex set, where by \emph{convex} we mean a subset of $\bb{Z}^d$ whose convex hull in $\bb{R}^d$, denoted by $\conv(\mathcal{S})$, is closed and $\mathcal{S} = \conv(\mathcal{S}) \cap \bb{Z}^d$, we say that $\eta$ has \emph{low convex complexity}. For $$I_{n_1} \times \cdots \times I_{n_d},$$ where $I_n := \{0,1,\ldots,n-1\}$ and $n \in \bb{N}$\footnote{We use $\bb{N} = \{1,2,\ldots\}$ and $\bb{Z}_+ = \bb{N} \cup \{0\}$.}, we write for short $P_{\eta}(n_1, \ldots ,n_d)$ instead of $P_{\eta}(I_{n_1} \times \cdots \times I_{n_d})$.

In the one-dimensional case, $I_n$-pat\-terns are called words and configurations are classically called sequences. Morse and Hedlund~\cite{hedlund} proved in 1938 one of the most famous results in symbolic dynamics which establishes a connection between periodic sequences and complexity:

\begin{theorem}[Morse-Hedlund]
Given a sequence $\xi \in \mathcal{A}^{\bb{Z}}$, if there exists $n \in \bb{N}$ such that $P_{\xi}(n) \leq n$, then \(\xi\) is periodic.
\end{theorem} 

If we naturally extend the notions of periodicity and complexity for sequences $\xi \in \mathcal{A}^{B}$, where $B = \{a,a+1,\ldots\}$ and \(a \in \bb{Z}\) (see \cite{van} for details), a version of Morse-Hedlund's Theorem also holds:

\begin{theorem}[Morse-Hedlund]\label{Morse-HedlundThm2}
Given a sequence $\xi \in \mathcal{A}^{B}$, with $B = \{a,a+1,\ldots\}$ and \(a \in \bb{Z}\), if there exists \(n \in \bb{N}\) such that \(P_{\xi}(n) \leq n\), then the sequence \((\xi_i)_{i \in B+n}\) is periodic.
\end{theorem}

Proposed by Maurice Nivat at ICALP 1997 in Bologna, the so-called Nivat's Con\-jecture \cite{nivat} is a generalization of the Morse-Hedlund Theorem for the two-di\-men\-si\-onal case:

\begin{conjecture}[Nivat]
Given a configuration $\eta \in \mathcal{A}^{\bb{Z}^2}$, if there exist $n,k \in$ $\bb{N}$ such that $P_{\eta}(n,k)  \leq nk$, then $\eta$ is periodic.
\end{conjecture}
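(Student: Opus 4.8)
The plan is to combine the algebraic decomposition machinery of Kari--Szabados with the two reduction theorems announced in the abstract and the semi-infinite Morse--Hedlund statement recalled above, driving the number of periodic components down to one. First I would lift the finite alphabet into $\bb{C}$ and regard $\eta$ as a formal power series $f(x,y) = \sum_{g \in \bb{Z}^2} \eta_g\, x^{g_1} y^{g_2}$, on which the Laurent ring $R = \bb{C}[x^{\pm 1}, y^{\pm 1}]$ acts by shifts in the two coordinates. A dimension count shows that the hypothesis $P_{\eta}(n,k) \le nk$ forces the annihilator ideal $\ann(f) = \{\, t \in R : tf = 0 \,\}$ to be nonzero. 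Factoring a suitable element of $\ann(f)$ into \emph{line polynomials} of the form $x^{a}y^{b} - 1$ yields a decomposition $\eta = \eta_1 + \cdots + \eta_m$ in which each $\eta_i$ is periodic with a period $v_i$ lying on a line $\ell_i$; among all such I would fix a \emph{minimal} one, so that $m$ is as small as possible.

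Next I would feed in the structural hypotheses. Because $P_{\eta}(n,k) \le nk$ places $\eta$ in the low convex pattern complexity regime, both main results of the paper apply without loss of generality for Nivat's Conjecture: first, a line $\ell$ is nonexpansive if and only if $\ell$ endowed with a suitable orientation is a one-sided nonexpansive direction; second, Szabados's Conjecture holds, so the nonexpansive lines are \emph{exactly} the lines $\ell_1, \ldots, \ell_m$ carrying the periods of the minimal decomposition. The geometry is thereby completely pinned down: finitely many nonexpansive lines, each matched to one component $\eta_i$, and each of them in fact \emph{oriented} nonexpansive.

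The core of the argument is to exploit those orientations. One-sided nonexpansiveness along $\ell_i$ means that the restriction of $\eta$ to a half-plane bounded by a line parallel to $\ell_i$ determines the pattern on the far side, so information propagates deterministically across $\ell_i$ in a single direction. Reading $\eta$ along the strips transverse to $\ell_i$ and invoking the semi-infinite version of Morse--Hedlund recalled in the preliminaries, the bound $P_{\eta}(n,k) \le nk$ forces eventual periodicity of $\eta$ on each such half-plane; the one-sided determinism then promotes ``eventual'' to genuine periodicity on the entire half-plane. Carrying this out for every $\ell_i$ at once, I would argue that the oriented directions cannot all propagate consistently ``outward'' about the origin unless the resulting transverse periods agree on overlaps, and each such agreement collapses two components $\eta_i,\eta_j$ into one, contradicting minimality unless $m = 1$. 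Once $m = 1$ the decomposition already displays a global period, so $\eta$ is periodic and the conjecture follows.

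I expect the decisive obstacle to be this final collapse when $m \ge 3$. For $m \le 2$ the orientations of the (at most two) lines can be played directly against one another: propagating in both transverse senses across two oppositely oriented nonexpansive lines recovers full periodicity and merges the components. When $m \ge 3$, however, the half-plane periodicities produced above overlap in several transverse directions simultaneously, and the real difficulty is to show that they are globally compatible rather than merely compatible on the overlapping wedges cut out by consecutive lines $\ell_i$. The orientation data furnished by the first reduction theorem is precisely the tool I would use to orient each propagation so that these wedges tile the plane coherently; controlling the interaction of all $m$ transverse periods at once, so that coherence on each wedge forces coherence everywhere, is the genuinely hard combinatorial heart of the proof.
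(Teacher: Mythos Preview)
The statement you are trying to prove is Nivat's Conjecture itself, and the paper does \emph{not} prove it: it is stated as an open conjecture, and the paper's main theorem (Theorem~\ref{main_theo}) is only a reduction showing that, within an inductive scheme, one may assume extra structure. In Section~\ref{sec4} the author says explicitly that the inductive step cannot be completed. So there is no ``paper's own proof'' to compare against, and your proposal is an attempt at an open problem.

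Beyond that, your argument contains a genuine error that would make the outlined approach collapse immediately. You write that one-sided nonexpansiveness along $\ell_i$ means the half-plane configuration \emph{determines} the far side, so that ``information propagates deterministically across $\ell_i$.'' This is the opposite of the definition: $\ell$ is a one-sided \emph{expansive} direction when $x\sob{\mathcal{H}(\ell)} = y\sob{\mathcal{H}(\ell)}$ implies $x=y$; one-sided \emph{nonexpansiveness} means precisely that there exist $x \neq y$ in $X_\eta$ agreeing on $\mathcal{H}(\ell)$. Hence along the lines $\ell_1,\dots,\ell_m$ you do \emph{not} get deterministic propagation, and the Morse--Hedlund step you describe (promoting eventual periodicity on a half-plane to full periodicity via one-sided determinism) has no engine behind it. The paper exploits nonexpansiveness in the opposite way: it produces $(\ell,\mathcal{S})$-ambiguous configurations and uses Proposition~\ref{prop_CyrKra} to get periodicity of those special accumulation points, not of $\eta$ itself.

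Finally, even granting everything up to your last paragraph, the ``collapse'' step for $m \ge 3$ is exactly the missing inductive step the paper cannot carry out. Your sketch (orient the wedges so they tile coherently and force the transverse periods to match) is not an argument but a restatement of the difficulty; nothing in the paper's toolkit---generating sets, semi-ambiguity, the $(\ell,\ell')$-region lemmas, or Proposition~\ref{main_prop}---is known to produce that global compatibility, and the paper stops precisely there.
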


Julien Cassaigne~\cite{cassaigne1} conjectured that the analogous of Nivat's conjecture holds for convex subsets of $\bb{Z}^2$.

The Nivat's conjecture has been extensively studied in the last 16 years. The first step towards a proof for the conjecture was given by Sander and Tijdeman~\cite{tijdeman}. They showed that if $P_{\eta}(n,2) \leq 2n$ for some $n \in \bb{N}$, then $\eta \in \mathcal{A}^{\bb{Z}^2}$ is periodic. Sander and Tijdeman \cite{sander} also found counter-examples to the analogue of Nivat's conjecture in higher dimensions, i.e., they showed that for $d\geq 3$ there exist non-periodic configurations $\eta \in$ $\{0,1\}^{\bb{Z}^d}$ such that $P_{\eta}(n, \ldots, n) = 2n^{d-1}+1$. Julien Cassaigne~\cite{julien06} showed that the Nivat's conjecture does not hold for $d \geq 3$ even if we suppose in addition that the configuration is repetitive, i.e., when the closure of its $\bb{Z}^d$-orbit is a minimal subshift. 

Let $\eta \in \mathcal{A}^{\bb{Z}^2}$ and suppose there exist $n,k \in \bb{N}$ such that $P_{\eta}(n,k) \leq \frac{1}{C}nk$. It was proved that $\eta$ is periodic for $C = 144$ in \cite{epifanio} and for $C = 16$ in \cite{quas}. Using the notion of expansive subspaces introduced by Boyle and Lind~\cite{boyle}, Bryna Kra and Van Cyr~\cite{van} shed a new light towards a proof for Nivat's conjecture by relating expansive subspaces to periodicity. They proved that $\eta$ is periodic for $C = 2$. Following the Cyr and Kra approach's, Colle and Garibaldi~\cite{colle} showed that, for a configuration $\eta \in \mathcal{A}^{\bb{Z}^2}$ that contains all letters of $\mathcal{A}$, if $P_{\eta}(\mathcal{S}) \leq \frac{1}{2}|\mathcal{S}|+|\mathcal{A}|-1$ for some non-empty, finite set $\mathcal{S} \subset$ $\mathbb{Z}^2$ whose convex hull on $\mathbb{R}^2$ is described by pairs of edges with identical size, then $\eta$ is periodic.

Employing results from algebraic geometry, Jarkko Kari and Michal Szabados~\cite{KariSzabados,kari} proved that any low complexity configuration $\eta \in \mathcal{A}^{\bb{Z}^d}$, with $\mathcal{A} \subset \bb{Z}$, has a rigid structure (see Theorem \ref{theorKS}). In the two-dimensional case, they showed that, if $P_{\eta}(n,k) \leq nk$ holds for infinitely many pairs $n,k \in \bb{N}$, then $\eta$ is periodic. One of the most important recent results related to Nivat's conjecture is due to Jarkko Kari and Etienne Moutot \cite{Moutot19}. They proved that any low convex complexity configuration has a periodic accumulation point, which solves Nivat's conjecture in the repetitive case.

Further partial results related to Nivat's conjecture are given in \cite{van1,durand,Szabados,Moutot,Moutot21}. 

\subsection{Expansiveness and one-sided expansiveness} 

Let $F$ be a subset of $\bb{R}^d$. For $g \in \bb{Z}^d$\!, we consider \[\dist(g,F) = \inf \{\|g-u\| : u \in F\},\] where $\| \cdot \|$ is the Euclidean norm in $\bb{R}^d$. Given $t > 0$, the $t$-neighbourhood of $F$ is defined as \[F^{t} := \{g \in \bb{Z}^d : \dist(g,F) \leq t\}.\] Let $X \subset \mathcal{A}^{\bb{Z}^d}$ be a subshift, i.e., a closed subset invariant by the $\bb{Z}^d$-action $T^u$, $u \in \bb{Z}^d$. Following Boyle and Lind \cite{boyle}, we say that a subspace $F \subset \bb{R}^d$ is \emph{expansive} on $X$ if there exists $t>0$ such that \[\forall \ x,y \in X, \quad x\sob{F^{t}} = y\sob{F^{t}} \implies x = y.\] Other\-wise, the subspace $F$ is called \emph{nonexpansive} on $X$. Boyle and Lind~\cite[Theorem~3.7]{boyle} showed that if $X \subset \mathcal{A}^{\bb{Z}^d}$ is an infinite sub\-shift, then, for each $0 \leq n < d$, there exists an $n$-dimensional subspace of $\bb{R}^d$ that is nonexpansive on $X$.

From now on, we will focus on the two-dimensional case and, for a given configuration $\eta \in \mathcal{A}^{\bb{Z}^2}$, we will consider the subshift $X_{\eta} :=  \overline{Orb\,(\eta)}$, where $Orb\,(\eta) := \{T^{u}\eta : u \in \bb{Z}^2\}$ de\-no\-tes the $\bb{Z}^2$-orbit of $\eta$ and the bar denotes the closure with respect to the product topology.

\begin{notation}
Given a configuration $\eta \in \mathcal{A}^{\bb{Z}^2}$, we use $\nexpl(\eta)$ to denote the set for\-med by the lines through the origin (one-dimensional subspaces) that are nonexpansive on $X_{\eta}$.
\end{notation}

When $\eta \in \mathcal{A}^{\bb{Z}^2}$ is a periodic configuration, but not fully periodic, the unique line $\ell \in \nexpl(\eta)$, which exists due to Boyle-Lind Theorem, must necessarily contain every period for $\eta$. Hence, any configuration $\eta \in \mathcal{A}^{\bb{Z}^2}$ where $\nexpl(\eta)$ has at least two elements can not be periodic.

In the sequel, we will revisit a more refined version of expansiveness called one-sided nonexpansiveness and introduced by Cyr and Kra in \cite{van}.

We recall that two vectors are parallel if they have the same direction and antiparallel if they have opposite directions. Two oriented objects in $\bb{R}^2$ are said to be \emph{(anti)parallel} if the adjacent vectors to their respective orientations are (anti)par- allel, where by object we mean an oriented line, an oriented line segment or a vector. In a slight abuse of notation, we view an oriented line also as a subset of $\bb{R}^2$.
 
Let $\pmb{\ell} \subset \bb{R}^2$ be an oriented line. The half plane determined by $\pmb{\ell}$ is defined as \[\mathcal{H}(\pmb{\ell}) := \{g \in \bb{Z}^2 : \langle g,(-u_2,u_1) \rangle \geq 0\},\] where $(u_1,u_2) \in \bb{R}^2$ is a non-zero vector parallel to $\pmb{\ell}$ and $\langle \cdot, \cdot \rangle$ denotes the standard inner product of $\bb{R}^2$. We remark that, following the orientation of $\pmb{\ell}$, the interior of $\mathcal{H}(\pmb{\ell})$ is on the left.

\begin{definition}
Let $X \subset \mathcal{A}^{\bb{Z}^2}$ be a subshift. An oriented line $\pmb{\ell} \subset \bb{R}^2$ through the origin is called a one-sided expansive direction on $X$ if \[\forall \ x,y \in X, \quad x\sob{\mathcal{H}(\pmb{\ell})} = y\sob{\mathcal{H}(\pmb{\ell})} \ \Longrightarrow \ x = y.\] Otherwise, the oriented line $\pmb{\ell}$ is called a one-sided non\-ex\-pan\-si\-ve direction on $X$. 
\end{definition}

\begin{notation}
Given a configuration $\eta \in \mathcal{A}^{\bb{Z}^2}$, we use $\nexpd(\eta)$ to denote the set formed by the oriented lines through the origin that are one-sided non\-ex\-pan\-si\-ve di\-rec\-tions on $X_{\eta}$.
\end{notation}

\begin{notation}
\medspace 	
\begin{enumerate}[(i)]\setlength{\itemsep}{5pt}
	\item Given a line $\ell \subset \bb{R}^2$, we use $\pmb{\ell}$ to denote the line $\ell$ endowed of a given o\-ri\-en\-ta\-tion.
	\item Given an oriented line $\pmb{\ell} \subset \bb{R}^2$, we use $-\pmb{\ell}$ to denote the oriented line antiparallel to $\pmb{\ell}$ that determines the same points of $\pmb{\ell}$ in $\bb{R}^2$. 
\end{enumerate}	
\end{notation}

We remark that $\ell \in \nexpl(\eta)$ if and only if $-\pmb{\ell} \in \nexpd(\eta)$ or $\pmb{\ell} \in \nexpd(\eta)$ (see \cite{van} for more details). As a consequence of a strong condition on the complexity function, in \cite{van} the authors got that $\ell \in \nexpl(\eta)$ if and only if $-\pmb{\ell},\pmb{\ell} \in \nexpd(\eta)$. Our main result (see Theorem \ref{main_thm}) allows us to assume that it holds for low convex complexity configurations. For periodic configurations we have the following result:

\begin{proposition}[Colle and Garibaldi \cite{colle}]\label{pps_par_antipar_exp}
Let $\eta \in \mathcal{A}^{\bb{Z}^2}$ be a periodic configuration. Then $\ell \in \nexpl(\eta)$ if and only if the antiparallel oriented lines $-\pmb{\ell},\pmb{\ell} \in \nexpd(\eta)$.
\end{proposition}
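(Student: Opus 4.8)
The plan is to derive the statement from the two results already available: Remark~\ref{remark_perio_expan}, which forbids a period from lying outside a nonexpansive line, and Lemma~\ref{lem_dir_exp}, which converts one-sided expansiveness of a rational oriented line into the existence of a finite convex set whose "last" intersection point with $\ell_{\mathcal S}$ is a singleton that is $\eta$-generated. The key structural fact to exploit is that, since $\eta$ is periodic but not doubly periodic, its group of periods is the cyclic group generated by a primitive vector $h_0 \in \bb{Z}^2$, and by Remark~\ref{remark_perio_expan} the only line through the origin that can possibly be nonexpansive — equivalently, the only line that can support a one-sided nonexpansive direction, via Lemma~\ref{lem_genset_noedge_expas} and the discussion following it — is the line $L_0 := \bb{R}h_0$. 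In particular the two oriented lines $\ell$ and $\lle$ in the statement must both be orientations of $L_0$ (otherwise both are trivially in the complement of $\nexpd(\eta)$ and there is nothing to prove), so it suffices to show: if one orientation of $L_0$ is a one-sided expansive direction, then so is the other.

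First I would fix the orientation-free picture: write $\ell$ for $L_0$ with one chosen orientation and $\lle$ for the reverse, and suppose for contradiction that $\ell \notin \nexpd(\eta)$ while $\lle \in \nexpd(\eta)$. Apply Lemma~\ref{lem_dir_exp} to the rational oriented line $\ell$: there is a non-empty finite convex set $\mathcal S \subset \bb{Z}^2$ with $\ell_{\mathcal S} \cap \mathcal S = \{g_0\}$ a singleton that is $\eta$-generated by $\mathcal S$. The heart of the argument is to leverage the period $h_0$ together with $\ell_{\mathcal S}$ being parallel to $L_0 = \bb{R}h_0$. Since $g_0$ is $\eta$-generated by $\mathcal S$, every $(\mathcal S \setminus \{g_0\})$-pattern of $\eta$ extends uniquely to an $\mathcal S$-pattern of $\eta$. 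Now translate $\mathcal S$ along $-h_0$ (i.e. replace $\mathcal S$ by $\mathcal S - h_0$, $\mathcal S - 2h_0$, \ldots): because $h_0$ is a period of $\eta$, the pattern set $\pat(\mathcal S,\eta)$ is unchanged under such translations, so $g_0 - k h_0$ remains $\eta$-generated by $\mathcal S - k h_0$ for every $k \ge 0$; moreover the line $\ell_{\mathcal S - k h_0}$ is the same line translated by $-k h_0$, which, since $h_0 \parallel \ell$, is exactly the same oriented line $\ell_{\mathcal S}$. Taking the union of the translates $\bigcup_{k \ge 0}(\mathcal S - k h_0)$ and using compactness of $X_\eta$ (Lemma~\ref{lem_dir_exp} is itself proved this way, per the remark after it), the one-sided determination propagates along the full half-plane $\mathcal H(\ell)$: knowing $x$ on $\mathcal H(\ell) \setminus \ell$ forces $x$ on all of $\mathcal H(\ell)$, hence forces $x$ entirely — but this is precisely one-sided expansiveness, and by the symmetric role of the two half-planes $\mathcal H(\ell)$ and $\mathcal H(\lle)$ (they are both "$L_0$ together with one side"), the same closing-up argument applied to $\lle$ yields a contradiction with $\lle \in \nexpd(\eta)$. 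The clean way to package this is to show directly that $\ell \notin \nexpd(\eta)$ iff $L_0 \notin \nexpl(\eta)$ iff $\lle \notin \nexpd(\eta)$, using that the generating set produced by Lemma~\ref{lem_dir_exp} can, after translating by multiples of $\pm h_0$, be centred so that it straddles $L_0$ symmetrically.

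I expect the main obstacle to be the bookkeeping in the propagation step: one must check carefully that translating the generating set $\mathcal S$ by $-h_0$ does not move the line $\ell_{\mathcal S}$ off itself and does not enlarge $\mathcal S \cap \ell_{\mathcal S}$ beyond a singleton — both of which hold exactly because $h_0$ is parallel to $\ell$ and because $P_\eta(\mathcal S - k h_0) = P_\eta(\mathcal S)$ by periodicity — and then that the nested union of these translates, together with compactness of $X_\eta$, genuinely upgrades "$g_0$ is $\eta$-generated by $\mathcal S$" to "$\mathcal H(\ell) \setminus \ell$ determines $\mathcal H(\ell)$". Everything else (the reduction to $L_0 = \bb{R}h_0$ via the cyclic period group and Remark~\ref{remark_perio_expan}, and the symmetry between $\ell$ and $\lle$ once one works with the bi-infinite line $L_0$) is formal. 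A cleaner alternative, if one wants to avoid re-deriving the compactness argument, is to invoke directly the sentence following Lemma~\ref{lem_dir_exp} — "$\ell \in \nexpl(\eta)$ iff at least one of $\ell,\lle$ is in $\nexpd(\eta)$" — combined with the observation that for a singly periodic $\eta$ the only candidate nonexpansive line is $L_0$, so $L_0 \in \nexpl(\eta)$ forces (by the argument above, or by the next results of the paper) that it is witnessed from \emph{both} sides; this turns the proposition into a two-line corollary, at the cost of relying on that "iff" statement in the narrative as a cited fact.
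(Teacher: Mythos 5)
Your reduction to the single line $L_0=\bb{R}h_0$ is fine: by Remark~\ref{remark_perio_expan} every line not containing the period is expansive on $X_{\eta}$, and an expansive line has both of its orientations one-sided expansive, so off $L_0$ the equivalence holds vacuously. The gap is in the only case that matters, the two orientations of $L_0$. Having assumed $\ell\notin\nexpd(\eta)$ and invoked Lemma~\ref{lem_dir_exp}, you hold a finite convex set $\mathcal{S}$ whose $\eta$-generated singleton lies on the $\ell_{\mathcal{S}}$ side; such a set propagates information only \emph{out of} $\mathcal{H}(\ell)$, i.e.\ it merely re-derives the hypothesis that $\ell$ is one-sided expansive, and says nothing about determination in the opposite transverse direction. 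Neither of your two moves crosses this bridge. First, translating $\mathcal{S}$ by multiples of $-h_0$ uses the period vacuously: $P_{\eta}(\mathcal{S}+u)=P_{\eta}(\mathcal{S})$ and ``$g_0+u$ is $\eta$-generated by $\mathcal{S}+u$'' hold for \emph{every} $u\in\bb{Z}^2$ and every configuration, since patterns are counted over all translates $T^v\eta$; moreover a translation parallel to $\ell$ can never move the generated singleton to the $\lle_{\mathcal{S}}$ side, which is what a witness for $\lle\notin\nexpd(\eta)$ would require. Second, the appeal to ``the symmetric role of the two half planes'' is exactly the statement to be proved: Lemma~\ref{lem_dir_exp} applied to $\lle$ would hand you the needed set only under the hypothesis $\lle\notin\nexpd(\eta)$, so the closing-up argument for $\lle$ is circular (and the ``cleaner alternative'' in your last sentences likewise asserts, rather than proves, that nonexpansiveness of $L_0$ is witnessed from both sides). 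A symptom of the problem is that your argument never uses ``periodic but not doubly periodic'' in an essential way, whereas the conclusion is false for general configurations — that one may \emph{assume} the pairing of one-sided nonexpansive directions is precisely the content of Theorem~\ref{main_theo}.

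For comparison, the paper itself does not reprove the proposition: it cites \cite{colle} and only records that the statement follows from Lemma~\ref{lem_dir_exp} together with Remark~\ref{remark_perio_expan}. A correct assembly of those ingredients has to exploit the $h_0$-periodicity transversally to $L_0$, for instance as follows. Every $x\in X_{\eta}$ is $h_0$-periodic, so its restriction to each lattice line parallel to $\ell$ belongs to a \emph{finite} alphabet of periodic rows, and $x$ is coded by its bi-infinite sequence of rows. If one orientation of $L_0$ were one-sided expansive, the generating set from Lemma~\ref{lem_dir_exp}, slid along the row of its generated point (here the periodicity of the rows is what makes finitely many preceding rows suffice), yields a finite-memory rule determining each row from the preceding ones; hence the row sequence of $\eta$ has eventually constant block complexity, and the Morse--Hedlund theorem forces it to be periodic, making $\eta$ doubly periodic — a contradiction. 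Some argument of this kind, converting one-sided determination into a one-dimensional complexity bound on the row coding, is the missing core; the translation bookkeeping you carried out cannot substitute for it.
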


One of the most important recent results related to Nivat's conjecture is due to Jarkko Kari and Etienne Moutot. Using the algebraic viewpoint, they proved the following theorem:

\begin{theorem}[Kari and Moutot \cite{Moutot19}]\label{EtienneMainThm}
Let $\eta \in \mathcal{A}^{\bb{Z}^2}$,\! with \(\mathcal{A} \subset \bb{Z}\), be a configuration with a non-trivial annihilator. Then there exists a configuration \(x \in X_{\eta}\) such that, for every line $\ell \subset \bb{R}^2$ through the origin, $-\pmb{\ell} \not\in \nexpd(x)$ whenever $\pmb{\ell} \not\in \nexpd(x)$.
\end{theorem}

\subsection{The algebraic viewpoint}

Classically in symbolic dynamics configurations are understood as elements of $\mathcal{A}^{\bb{Z}^d}$, but the symbols in the alphabet $\mathcal{A}$ do not matter. From now on, we will suppose $\mathcal{A} \subset R$, where $R$ is $\bb{Z}$ or some finite filed. Following the algebraic viewpoint introduced by Kari and Szabados \cite{KariSzabados}, we may represent any configuration $\eta \in R^{\bb{Z}^d}$ as a formal power series in $d$ variables $x_1, \ldots, x_d$ with coefficients in $R$, i.e., as an element of \[R[[X^{\pm 1}]] = \left\{\sum a_gX^g : g \in \bb{Z}^d, \ a_g \in R\right\},\] where $g = (g_1,\ldots, g_d) $ and $X^g$ is a shorthand for $x_1^{g_1} \cdots x_d^{g_d}$. Here, a formal power series is a representation of a configuration on \(R^{\mathbb{Z}^d}\), but it is important to keep in mind that a formal power series may represent a configuration defined on an infinite alphabet if \(R = \bb{Z}\). Let \(R[X^{\pm 1}]\) denote the Laurent polynomials in \(d\) variables with coefficients in $R$.

Let $\eta \in \mathcal{A}^{\bb{Z}^d}$, with $\mathcal{A} \subset R$, be a configuration. A Laurent polynomial $\varphi(X) = a_1X^{u_1}+\cdots+a_nX^{u_n}$, with $a_i \in R$ and $u_i \in \bb{Z}^d$, \emph{annihilates $\eta$} (in the sense that $\varphi \eta = 0$) if and only if 
\begin{equation}\label{eq_annihilator}
a_1\eta_{-u_1+g}+\cdots+a_n\eta_{-u_n+g} = 0 \quad \forall g \in \bb{Z}^d.
\end{equation}  
We remark that the operations in (\ref{eq_annihilator}) are the binary operations of $(R,+,\cdot)$, the multiplicative and additive identities on $R$ are denoted by $1$ and $0$, respectively, and, for any $x,y \in R$, $x-y := x+(-y)$, where $-y \in R$ denotes the inverse additive of $y$. The set of Laurent polynomials $\varphi \in R[X^{\pm 1}]$ that annihilates $\eta \in \mathcal{A}^{\bb{Z}^d}$\! is de\-no\-ted by $\ann_R(\eta)$. In this algebraic setting, $\eta \in R[[X^{\pm 1}]]$ is periodic of period $h \in \bb{Z}^d$ if and only if $(X^{ h}-1)\eta = 0$.

Kari and Szabados proved the following theorem:

\begin{theorem}[Kari and Szabados \cite{KariSzabados}]\label{theorKS}
Let $\eta \in \mathcal{A}^{\bb{Z}^d}$, with $\mathcal{A} \subset \bb{Z}$, be a configuration.
\begin{enumerate}[(i)]\setlength{\itemsep}{6pt}
	\item If $\eta$ has low complexity, then there exist vectors $h_1,\ldots,h_m \in \bb{Z}^d$ in pairwise distinct directions such that $(X^{h_1}-1) \cdots (X^{h_m}-1) \in \ann_{\bb{Z}}(\eta)$.
	\item If $(X^{h_1}-1) \cdots (X^{h_m}-1) \in \ann_{\bb{Z}}(\eta)$, where $h_1,\ldots,h_m \in \bb{Z}^d$ are vectors in pairwise distinct directions, then there exist periodic configurations $\eta_1,\ldots,\eta_m$ $\in \bb{Z}[[X^{\pm 1}]]$, with $h_i$ a period for $\eta_i$, such that $\eta = \eta_1+\cdots+\eta_m$.   
\end{enumerate} 
\end{theorem} 

The configurations $\eta_i \in \bb{Z}[[X^{\pm 1}]]$ in the Theorem \ref{theorKS} may be defined on infinite alphabets (see \cite[Example 17]{KariSzabados}).

Let $\eta \in \mathcal{A}^{\bb{Z}^d}$ and suppose $\eta_1, \ldots, \eta_m \in R[[X^{\pm 1}]]$ are periodic configurations such that $\eta = \eta_1+ \cdots+\eta_m$. We call $\eta = \eta_1+ \cdots+\eta_m$ a \emph{$R$-periodic decomposition}. If $m \leq n$ for every $R$-pe\-ri\-o\-dic de\-com\-po\-si\-tion $\eta = \vartheta_1+ \cdots+\vartheta_{n}$, we call $\eta = \eta_1+\cdots+\eta_m$ a \emph{$R$-mi\-ni\-mal periodic decomposition} and the number $m$ is called the \emph{order of $\eta$}. 

If \(\eta\) is a non-periodic configuration and $\eta = \eta_1+ \cdots+\eta_m$ is a \(R\)-minimal periodic decomposition, then any two periods for $\eta_i$ and $\eta_j$, with $i \neq j$, are in distinct direc- tions.

Due to Theorem \ref{theorKS}, to prove Nivat's conjecture it is enough to prove the following conjecture:

\begin{conjecture}\label{second_mainconj}
Let $\eta \in \mathcal{A}^{\bb{Z}^2}$, with $\mathcal{A} \subset \bb{Z}$, and suppose $\eta = \eta_1+\cdots+\eta_m$ is a $\bb{Z}$-minimal periodic decomposition with order \(m \geq 2\). Then $\eta$ does not have low convex complexity.
\end{conjecture} 

We remark that Conjecture \ref{second_mainconj} holds for configurations with order $2$:

\begin{theorem}[Szabados \cite{Szabados}]\label{main_theor_szabados}
Let $\eta \in \mathcal{A}^{\bb{Z}^2}$, with $\mathcal{A} \subset \bb{Z}$, and suppose $\eta = \eta_1+\eta_2$ is a $\bb{Z}$-minimal periodic decomposition. Then $\eta$ does not have low convex complexity.
\end{theorem}


If $\eta = \eta_1+\cdots+\eta_m$ is a $\bb{Z}$-minimal periodic decomposition, it is easy to see that every nonexpansive line on $X_{\eta}$ contains a period for some $\eta_i$, with $1 \leq i \leq m$ (see Remark~\ref{rem_minperdecomandperiods}). In his Ph.D. thesis \cite{Szabadosthesis}, Michal Szabados conjectured that the conversely also holds:

\begin{conjecture}[Szabados]
Let $\eta \in \mathcal{A}^{\bb{Z}^2}$, with $\mathcal{A} \subset \bb{Z}$, be a configuration. If $\eta$ is not fully periodic and $\eta = \eta_1+\cdots+\eta_m$ is a $\bb{Z}$-minimal periodic decomposition, then, for each $1 \leq i \leq m$, a line $\ell \subset \bb{R}^2$ contains a period for $\eta_i$ if and only if $\ell \in \nexpl(\eta)$.
\end{conjecture}

\subsection{Our contributions}

\begin{theorem}\label{secondary_thm}
Let $\eta \in \mathcal{A}^{\bb{Z}^2}$, with $\mathcal{A} \subset \bb{Z}$, be a configuration with a non-trivial annihilator. If $-\pmb{\ell} \not\in \nexpd(\eta)$ or $\pmb{\ell} \not\in \nexpd(\eta)$ for all lines $\ell \subset \bb{R}^2$ through the origin, then $\eta$ is fully periodic. 
\end{theorem}

\begin{theorem}\label{main_thm}
Let $\eta \in \mathcal{A}^{\bb{Z}^2}$, with $\mathcal{A} \subset \bb{Z}$, be a low convex complexity con\-fig\-u\-ra\-tion. If \(\eta\) is not fully periodic and all non-periodic configurations in $X_{\eta}$ have the same order, then
\begin{enumerate}[(i)]\setlength{\itemsep}{6pt}
	\item Szabados's conjecture holds;
	\item $\ell \in \nexpl(\eta)$ if and only if the antiparallel oriented lines $-\pmb{\ell},\pmb{\ell} \in \nexpd(\eta)$.
\end{enumerate}
\end{theorem}

In the previous theorem, if \(\eta\) is periodic, then \(X_{\eta}\) does not have any non-pe\-ri\-o\-dic con\-fig\-u\-ra\-tion and so by emptiness all non-periodic configurations in $X_{\eta}$ have the same order. 

We remark that the properties (i) and (ii) in the previous theorem hold for low convex complexity configurations with order \(3\) (see Corollary \ref{SzabadosConjem=3}).    

As an application of the previous theorem, we have the following result:

\begin{corollary}\label{main_cor}
Nivat's conjecture holds for low convex complexity configurations if and only if it holds for low convex complexity configurations $\eta \in \mathcal{A}^{\bb{Z}^2}$, with $\mathcal{A} \subset \bb{Z}$, such that
\begin{enumerate}[(i)]\setlength{\itemsep}{6pt}
	\item Szabados's conjecture holds;
	\item $\ell \in \nexpl(\eta)$ if and only if the antiparallel oriented lines $-\pmb{\ell},\pmb{\ell} \in \nexpd(\eta)$.
\end{enumerate}
\end{corollary}
\begin{proof}
The idea is to argue that Conjecture~\ref{second_mainconj} holds if Nivat's conjecture holds for any low convex complexity configuration satisfying conditions (i) and (ii). 

Suppose Nivat's conjecture holds for any low convex complexity configuration satisfying conditions (i) and (ii). Let \(\vartheta \in \mathcal{A}^{\bb{Z}^2}\), with \(\mathcal{A} \subset \bb{Z}\), be a configuration and suppose \(\vartheta = \vartheta_1 +\cdots+\vartheta_m\) is a \(\bb{Z}\)-minimal periodic decomposition with order \(m \geq 2\). Since Conjecture~\ref{second_mainconj} holds for \(m = 2\) (see Theorem~\ref{main_theor_szabados}), we may assume \(m \geq 3\). Suppose, by induction hypothesis, that Conjecture~\ref{second_mainconj} holds for configurations with order $n \leq m-1$. To approach the inductive step, suppose, by contradiction, that \(\vartheta\) has low convex complexity. Hence, as by induction hypothesis all non-pe\-ri\-o\-dic configurations in $\overline{Orb\,(\vartheta)}$ have the same order $m$, Theorem~\ref{main_thm} implies that 
\begin{enumerate}[(i)]\setlength{\itemsep}{6pt}
	\item Szabados's conjecture holds;
	\item $\ell \in \nexpl(\vartheta)$ if and only if the antiparallel oriented lines $-\pmb{\ell},\pmb{\ell} \in \nexpd(\vartheta)$.
\end{enumerate} 
Therefore, our initial assumption implies that \(\varphi\) is periodic, which is con\-tra\-dic\-tion. So the inductive step is done and Conjecture~\ref{second_mainconj} holds.
\end{proof}

The global strategy for the proof of Theorem \ref{main_thm} consists  basically in showing the existence of a non-periodic accumulation point that is fully periodic on an un\-bound\-ed region, and then in arguing that the boundary geometry of this region has a rigid structure (see Subsection \ref{subsec} for a detailed picture). 

The rest of the paper is organized as follows: in Section~\ref{sec2}, we review key concepts and results. In Section~\ref{sec3}, we prove preliminary results and Theorem~\ref{secondary_thm} and, in Section~\ref{sec4}, we prove preliminary results and Theorem~\ref{main_thm}. 

\section{Background}
\label{sec2}
In this section, we will revisit notions, known results or immediate applications of known results.   

\subsection{Geometric notations}

Let $\mathcal{S} \subset \bb{Z}^2$ be a convex set. A point $g \in \mathcal{S}$ is called a \emph{vertex} of $\mathcal{S}$ when $\mathcal{S} \backslash \{g\}$ is still a convex set. If $\conv(\mathcal{S})$ has positive area, an edge of the convex polygon $\conv(\mathcal{S})$ is called an \emph{edge} of $\mathcal{S}$. We use $V(\mathcal{S})$ and $E(\mathcal{S})$ to denote, respectively, the sets of vertices and edges of $\mathcal{S}$. 

If $\mathcal{S} \subset \bb{Z}^2$ is a convex set (possibly infinite) such that $\conv(\mathcal{S})$ has positive area, our standard convention is that the boundary of $\conv(\mathcal{S})$ is positively oriented. With this convention, each edge $w \in E(\mathcal{S})$ inherits a natural orientation from the boundary of $\conv(\mathcal{S})$. If \(|E(\mathcal{S})| < \infty\), our convention endows each \(w \in E(\mathcal{S})\) with a well-defined successor edge \(w_s \in E(\mathcal{S})\) and a well-defined predecessor edge \(w_p \in E(\mathcal{S})\). We use \(w_p \prec w\) to indicate that \(w_p \in E(\mathcal{S})\) is the predecessor edge of \(w \in E(\mathcal{S})\).   

\begin{definition}\label{retabase}
Let $\pmb{\ell} \subset \bb{R}^2$ be an oriented line and suppose $\mathcal{S} \subset \bb{Z}^2$ is a non-empty set such that $\mathcal{S}+u \subset \mathcal{H}(\pmb{\ell})$ for some $u \in \bb{Z}^2$. The \emph{support line of $\mathcal{S}$} determined by $\pmb{\ell}$, denoted by $\pmb{\ell}_{\mathcal{S}}$, is defined as the oriented line $\pmb{\ell}'$ parallel to $\pmb{\ell}$ such that $\mathcal{S} \subset \mathcal{H}(\pmb{\ell}')$ and $\pmb{\ell}' \cap \mathcal{S} \neq \emptyset$.
\end{definition}

Let $\pmb{\ell} \subset \bb{R}^2$ be an oriented line. If $\mathcal{S} \subset \bb{Z}^2$ is a non-empty, finite, convex set, then $\pmb{\ell}_{\mathcal{S}}$ is defined and either $\mathcal{S} \cap \pmb{\ell}_{\mathcal{S}}$ is a vertex of $\mathcal{S}$ or $\conv(\mathcal{S} \cap \pmb{\ell}_{\mathcal{S}})$ is an edge of $\mathcal{S}$ parallel to $\pmb{\ell}$ (see Figure \ref{fig1}). The support line of \(\mathcal{S}\) determined by \(-\pmb{\ell}\), the oriented line \(-\pmb{\ell}_{\mathcal{S}}\), is also defined and \(-\pmb{\ell}_{\mathcal{S}} \cap \pmb{\ell}_{\mathcal{S}} = \emptyset\) if \(\mathcal{S}\) has positive area. 

\begin{figure}[ht]
\centering
\def\svgwidth{6cm}
\includegraphics[width=6cm]{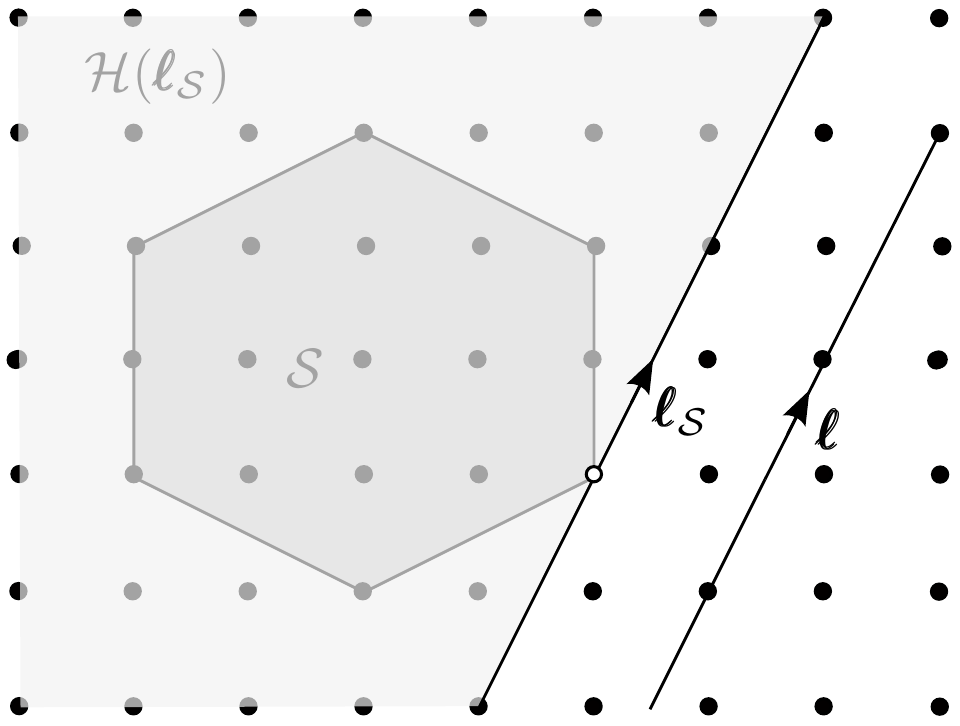}\\
\caption{The set $\mathcal{S}$ and the oriented lines $\pmb{\ell}$ and $\pmb{\ell}_{\mathcal{S}}$.} 
\label{fig1}
\end{figure}

\subsection{Generating sets}
The notion of generating set was developed in \cite{van} and studied in a more general setting on \cite{frankskra}. 

\begin{definition}
Let $\eta \in \mathcal{A}^{\bb{Z}^2}$ be a configuration and suppose $\mathcal{S} \subset \bb{Z}^2$ is a finite set. A point $g \in \mathcal{S}$ is said to be \emph{$\eta$-generated by $\mathcal{S}$} if $P_{\eta}(\mathcal{S}) = P_{\eta}(\mathcal{S} \backslash \{g\})$. A non-empty, finite, convex subset of $\bb{Z}^2$ for which each vertex is $\eta$-generated is called an \emph{$\eta$-generating set}.
\end{definition}

As highlighted in \cite{van}, the geometry of an \(\eta\)-generating set is related to the one-sided nonexpansive directions on \(X_{\eta}\). Actually, as an immediate corollary from Lemma \ref{lem_genset_noedge_expas}, which is similar to Lemma 2.13 of \cite{van}, one has that all one-sided nonexpansive directions on $X_{\eta}$ are parallel to some edge of every $\eta$-generating set whose convex hull has positive area. In particular, lines in $\nexpl(\eta)$ or oriented lines in \(\nexpd(\eta)\) are rational, where by \emph{rational} we mean a line or an oriented line in $\bb{R}^2$ with a rational slope.

\begin{lemma}\label{lem_genset_noedge_expas}
Let $\eta \in \mathcal{A}^{\bb{Z}^2}$\! and suppose $\pmb{\ell} \subset \bb{R}^2$ is an oriented line through the ori\-gin and $\mathcal{S} \subset \bb{Z}^2$ is a finite, convex set such that $\mathcal{S} \cap \pmb{\ell}_{\mathcal{S}} = \{g_0\}$ is $\eta$-generated by $\mathcal{S}$. Then $\pmb{\ell} \not\in \nexpd(\eta)$.
\end{lemma}
\begin{proof}
It is straightforward from definitions.
\end{proof}

The next lemma shows, in particular, that every low convex complexity configuration has a generating set:

\begin{lemma}[Cyr and Kra \cite{van}]\label{lemma_generatingset}
Let $\eta \in \mathcal{A}^{\bb{Z}^2}$ be a low convex complexity con\-fig\-u\-ra\-tion. Then there exists an $\eta$-generating set $\mathcal{S} \subset \mathbb{Z}^2$ such that $$P_{\eta}(\mathcal{S})-P_{\eta}(\mathcal{S} \backslash \pmb{\ell}_{\mathcal{S}}) \leq |\mathcal{S} \cap \pmb{\ell}_{\mathcal{S}}|-1$$ for all oriented lines $\pmb{\ell} \subset \bb{R}^2$.
\end{lemma}

Let $\varphi(X) = \sum_{i} a_iX^{u_i}$, with $a_i \in R$ and $u_i \in \bb{Z}^2$, be a Laurent polynomial. The \emph{support} of $\varphi$ is defined as $$\supp(\varphi) := \{u_i \in \bb{Z}^2 : a_i \neq 0\}.$$ The \emph{reflected convex support of $\varphi$} is defined as $\mathcal{S}_{\varphi} := \conv(-\supp(\varphi)) \cap \bb{Z}^2$.

\begin{lemma}[Szabados \cite{Szabados}]\label{lemma_supportgenerating}
Let $\eta \in \mathcal{A}^{\bb{Z}^2}$\! be a configuration. If the Laurent polynomial $\varphi \in \ann_R(\eta)$, then $\mathcal{S}_{\varphi}$ is an $\eta$-generating set.	
\end{lemma}
\begin{proof}
Given $g \in \bb{Z}^2$ and $u_0 \in \supp(\varphi)$, the knowledge of the configuration $\eta$ on $-u+g$ for $u \in \supp(\varphi)$, with $u \neq u_0$, determines the configuration $\eta$ on $-u_0+g$ by applying equation (\ref{eq_annihilator}). This means that $-u_0 \in -\supp(\varphi)$ is $\eta$-generated by $-\supp(\varphi)$.  
\end{proof}

\begin{lemma}[Szabados \cite{Szabados}]\label{lem_periods_annihilator}
Let $\eta \in \mathcal{A}^{\bb{Z}^2}$ be a configuration and suppose $\varphi(X) = (X^{h_1}-1) \cdots (X^{h_m}-1) \in \ann_R(\eta)$, where \(h_1, \ldots, h_m \in \bb{Z}^2\) are vectors in pairwise distinct directions. If $\ell \in \nexpl(\eta)$, then \(\ell\) contains some vector $h_i$, with $1 \leq i \leq m$.
\end{lemma}
\begin{proof}
Note that
\begin{equation*}
\supp(\varphi) = \{(0,0)\} \cup \{h_{i_1}+\cdots+h_{i_r} : 1 \leq i_1 < \cdots < i_r \leq m, \ 1 \leq r \leq m\}.
\end{equation*}
Then \(|E(\mathcal{S}_{\varphi})| = 2m\) and, for each \(w \in E(\mathcal{S}_{\varphi})\), \(w\) is either parallel or antiparallel to some vector \(h_i\), with \(1 \leq i \leq m\). Therefore, since $\mathcal{S}_{\varphi}$ is an \(\eta\)-generating set and $\ell \in \nexpl(\eta)$ if and only if $-\pmb{\ell} \in \nexpd(\eta)$ or $\pmb{\ell} \in \nexpd(\eta)$, the result follows from Lemma~\ref{lem_genset_noedge_expas} applied to \(\mathcal{S}_{\varphi}\).
\end{proof}

Of course, we may consider a one-sided nonexpansive direction instead of a non- expansive line in the previous lemma. 

\begin{remark}\label{rem_minperdecomandperiods}
If $\eta = \eta_1+\cdots+\eta_m$ is a $R$-periodic decomposition and $\ell \in \nexpl(\eta)$, then $\ell$ contains a period for some $\eta_i$, with $1 \leq i \leq m$. Indeed, if $h_i \in \bb{Z}^2$ denotes a period for $\eta_i$, the Laurent polynomial $\varphi(X) = (X^{h_1}-1) \cdots (X^{h_m}-1) \in \ann_R(\eta)$.
\end{remark}

To conclude this section, we highlight that, according to Corollary 24 of \cite{KariSzabados}, the ge\-om\-e\-try of $\mathcal{S}_{\psi}$ has a rigid structure if \(\psi \in \ann_{\bb{Z}}(\eta)\):    

\begin{proposition}[Kari and Szabados \cite{KariSzabados}]\label{prop_geom_convexset}
Let $\eta \in \mathcal{A}^{\bb{Z}^2}$, with $\mathcal{A} \subset \bb{Z}$, be a non-periodic configuration. Suppose $\eta = \eta_1+\cdots+\eta_m$ is a $\bb{Z}$-minimal periodic decomposition, $h_i \in \bb{Z}^2$ is a period for $\eta_i$, with \(1 \leq i \leq m\), and $\varphi(X) = (X^{h_1}-1) \cdots (X^{h_{m}}-1)$. If $\psi \in \ann_{\bb{Z}}(\eta)$, then, for every edge $w \in E(\mathcal{S}_{\varphi})$, there exists an edge $w' \in E(\mathcal{S}_{\psi})$ parallel to $w$.
\end{proposition}
 
Pro\-po\-si\-tion~\ref{prop_geom_convexset} provides a bound for the number of periodic configurations of a $\bb{Z}$-minimal periodic decomposition:

\begin{remark}
Let $\eta \in \mathcal{A}^{\bb{Z}^2}$\!, with $\mathcal{A} \subset \bb{Z}$, and suppose there exist $n,k \in \bb{N}$ such that $P_{\eta}(n,k) \leq nk$. If $\eta = \eta_1+\cdots+\eta_m$ is a $\bb{Z}$-minimal periodic decomposition, then $m \leq \min\{n,k\}$. In fact, following the approach of \cite{kari}, there exist a Laurent polynomial $\sigma \in \bb{Z}[X^{\pm 1}]$, with \(-\supp(\sigma) \subset I_n \times I_k\), and \(c \in \bb{Z}\) such that $\sigma \eta = c$. Since $\mathcal{S}_{\sigma}$ is contained in $I_n \times I_k$, then $\mathcal{S}_{\sigma}$ has at most $2\min\{n,k\}$ edges. For an appropriated $u \in \bb{Z}^2$, the reflected convex support of $(X^{u}-1)$ $\sigma(X) \in \ann_{\bb{Z}}(\eta)$ has at most $2\min\{n,k\}$ edges. So Proposition~\ref{prop_geom_convexset} applied to $\psi(X) = (X^{u}-1)\sigma(X)$ allows us to conclude that $m \leq \min\{n,k\}$.
\end{remark}

\subsection{Periodicity}

In this section, we state results related to periodicity. To begin, we revisit the notion of ambiguity of \cite{van}. Let $\pmb{\ell} \subset \bb{R}^2$ be an oriented line through the origin and suppose that $\mathcal{S} \subset \bb{Z}^2$ is a non-empty, finite, convex set. For each $\gamma \in \pat(\mathcal{S} \backslash \pmb{\ell}_{\mathcal{S}},\eta)$, we define
\begin{equation*}\label{eq_dfn_cont}
N_{\mathcal{S}}(\pmb{\ell},\gamma) := \left|\{\gamma' \in \pat(\mathcal{S},\eta) : \gamma'\sob{\mathcal{S} \backslash \pmb{\ell}_{\mathcal{S}}} = \gamma\}\right|.
\end{equation*} 
Notice that $N_{\mathcal{S}}(\pmb{\ell},\gamma) = 1$ means that 
$\gamma'\sob{\mathcal{S} \backslash \pmb{\ell}_{\mathcal{S}}} = \gamma = \gamma''\sob{\mathcal{S} \backslash \pmb{\ell}_{\mathcal{S}}}$ implies $\gamma' = \gamma''$ for any $\gamma',\gamma'' \in \pat(\mathcal{S},\eta)$. Let $\pmb{\ell} \subset \bb{R}^2$ be a rational oriented line through the origin.  A configuration $x \in X_{\eta}$ is said to be \emph{$(\pmb{\ell},\mathcal{S})$-ambiguous} if, 
\begin{equation*}\label{ambiguous}
\forall \  \mathcal{S} \backslash \pmb{\ell}_{\mathcal{S}}\textrm{-pattern} \ \gamma = (T^{t\vec{v}_{\pmb{\ell}}}x)\sob{\mathcal{S} \backslash \pmb{\ell}_{\mathcal{S}}}, \ t \in \bb{Z}, \ \ \text{one has} \ N_{\mathcal{S}}(\pmb{\ell},\gamma) > 1,  
\end{equation*}
where $\vec{v}_{\pmb{\ell}} \in \pmb{\ell} \cap \bb{Z}^2$ denotes the non-zero vector parallel to $\pmb{\ell}$ of minimum norm. A configuration $x \in X_{\eta}$ is said to be \emph{$(\pmb{\ell},\mathcal{S},\pm)$-semi-ambiguous} if there exists $\tau \in \bb{Z}$ such that,
\begin{equation*}\label{semi-ambiguous}
\forall \  \mathcal{S} \backslash \pmb{\ell}_{\mathcal{S}}\textrm{-pattern} \ \gamma = (T^{\pm t\vec{v}_{\pmb{\ell}}}x)\sob{\mathcal{S} \backslash \pmb{\ell}_{\mathcal{S}}}, \ t \geq \tau, \ \ \text{one has} \ N_{\mathcal{S}}(\pmb{\ell},\gamma) > 1.
\end{equation*}
  
If $x \in X_{\eta}$ is an $(\pmb{\ell},\mathcal{S},+)$-semi-ambiguous configuration, then each \(\mathcal{S} \backslash \pmb{\ell}_{\mathcal{S}}\)-pattern of \(x\) arising on the half strip \(\{g+t\vec{v}_{\pmb{\ell}} : g \in \mathcal{S} \backslash \pmb{\ell}_{\mathcal{S}}, \ t \geq \tau\}\) extends to at least two distinct \(\mathcal{S}\)-patterns of \(\eta\). Similarly, if $x \in X_{\eta}$ is an $(\pmb{\ell},\mathcal{S},-)$-semi-ambiguous configuration, then each \(\mathcal{S} \backslash \pmb{\ell}_{\mathcal{S}}\)-pattern of \(x\) arising on the half strip \(\{g-t\vec{v}_{\pmb{\ell}} : g \in \mathcal{S} \backslash \pmb{\ell}_{\mathcal{S}}, \ t \geq \tau\}\) extends to at least two distinct \(\mathcal{S}\)-patterns of \(\eta\). It is not difficult to see that there always exist $(\pmb{\ell},\mathcal{S})$-ambiguous configurations in $X_{\eta}$ if $\pmb{\ell} \in \nexpd(\eta)$ and $\mathcal{S} \subset \bb{Z}^2$ is an $\eta$-gene\-ra\-ting set.

The next result is a summary of two results of \cite{van}, namely, Lemma 2.24 and a piece of the proof of Proposition 4.8.

\begin{proposition}[Cyr and Kra \cite{van}]\label{prop_CyrKra}
Let $\eta \in \mathcal{A}^{\bb{Z}^2}$ be a configuration and suppose $\pmb{\ell} \in \nexpd(\eta)$. If there exists an $\eta$-generating set $\mathcal{S} \subset \bb{Z}^2$, with $|\mathcal{S} \cap \pmb{\ell}_{\mathcal{S}}| \leq |\mathcal{S} \cap -\pmb{\ell}_{\mathcal{S}}|$, such that \(P_{\eta}(\mathcal{S})-P_{\eta}(\mathcal{S} \backslash \pmb{\ell}_{\mathcal{S}}) \leq |\mathcal{S} \cap \pmb{\ell}_{\mathcal{S}}|-1\), then, for any $(\ell,\mathcal{S})$-ambiguous con\-fig\-u\-ra\-tion $x \in X_{\eta}$, the restriction of $x$ to the half plane \(\mathcal{H}(-\pmb{\ell}_{\mathcal{S}})\) is periodic with period parallel to \(\pmb{\ell}\).
\end{proposition}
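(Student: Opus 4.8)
We write $\vec{v}:=\vec{v}_{\ell}$ and $\mathcal{S}':=\mathcal{S}\setminus\ell_{\mathcal{S}}$, and set $k:=|\ell_{\mathcal{S}}\cap\mathcal{S}|$, $k':=|\lle_{\mathcal{S}}\cap\mathcal{S}|$, so $k\le k'$ by hypothesis. By Lemma~\ref{lem_genset_noedge_expas} we have $k\ge 2$ (otherwise $\ell\notin\nexpd(\eta)$), and by the discussion following it $\ell$ is parallel to an edge of $\mathcal{S}$, the two edges of $\mathcal{S}$ parallel to $\ell$ being $\ell_{\mathcal{S}}\cap\mathcal{S}$ and $\lle_{\mathcal{S}}\cap\mathcal{S}$. (If $\conv(\mathcal{S})$ has zero area the assertion is immediate: then $\mathcal{S}$ is a segment parallel to $\ell$ of $k$ lattice points, $\mathcal{S}'=\emptyset$, and the hypothesis reads $P_{\eta}(\mathcal{S})\le k$, so by Morse--Hedlund's Theorem every fibre of $\eta$ parallel to $\ell$ is periodic and $x$ itself is $\ell$-periodic; so assume $\conv(\mathcal{S})$ has positive area.) Choose coordinates in which $\ell$ is vertical, $\vec{v}=(0,1)$, the columns met by $\mathcal{S}$ are $a_{\min},\dots,a_{\max}$ with $\lle_{\mathcal{S}}\cap\mathcal{S}$ the leftmost (having $k'$ lattice points) and $\ell_{\mathcal{S}}\cap\mathcal{S}$ the rightmost (having $k$ lattice points); then $\mathcal{H}(\lle_{\mathcal{S}})$ consists of the lattice points in the columns $\ge a_{\min}$. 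Since the upper boundary of the convex set $\mathcal{S}$ is concave and its lower boundary convex, the number of lattice points of $\mathcal{S}$ in any column $c\in[a_{\min},a_{\max}]$ is at least the value at $c$ of the affine function equal to $k'$ at $a_{\min}$ and to $k$ at $a_{\max}$; as $k\le k'$ this value is $\ge k$, so every column of $\mathcal{S}$ contains at least $k$ lattice points --- this is the one place where $|\ell_{\mathcal{S}}\cap\mathcal{S}|\le|\lle_{\mathcal{S}}\cap\mathcal{S}|$ enters.

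\emph{Step 1 (periodicity near $\mathcal{S}$).} Decomposing $P_{\eta}(\mathcal{S})=\sum_{\gamma\in\pat(\mathcal{S}',\eta)}N_{\mathcal{S}}(\ell,\gamma)$ and using the hypothesis, $\sum_{\gamma}(N_{\mathcal{S}}(\ell,\gamma)-1)=P_{\eta}(\mathcal{S})-P_{\eta}(\mathcal{S}')\le k-1$, so the set $G:=\{\gamma:N_{\mathcal{S}}(\ell,\gamma)\ge 2\}$ has $|G|\le k-1$. As $x$ is $(\ell,\mathcal{S})$-ambiguous, $(T^{t\vec{v}}x)\sob{\mathcal{S}'}\in G$ for every $t\in\bb{Z}$. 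Fix a column $c$ with $a_{\min}\le c\le a_{\max}-1$; its intersection with $\mathcal{S}'$ is a segment of at least $k$ lattice points, so every block of $k$ consecutive entries of the $c$-th column of $x$ occurs as a restriction of some $\gamma\in G$. Hence that column has at most $|G|\le k-1<k$ distinct blocks of length $k$, and Morse--Hedlund's Theorem makes it $\ell$-periodic of period at most $k-1$. Taking least common multiples, $x$ restricted to the strip of columns $a_{\min},\dots,a_{\max}-1$ is $\ell$-periodic of a period bounded in terms of $k$ alone.

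\emph{Step 2 (propagation to $\mathcal{H}(\lle_{\mathcal{S}})$).} For $j\ge 0$ the translate $\mathcal{S}^{(j)}:=\mathcal{S}+(j,0)$ is $\eta$-generating and, by translation invariance, satisfies $P_{\eta}(\mathcal{S}^{(j)})-P_{\eta}(\mathcal{S}^{(j)}\setminus\ell_{\mathcal{S}^{(j)}})\le k-1$. The $\eta$-generatedness of the endpoints of the edge $\ell_{\mathcal{S}^{(j)}}\cap\mathcal{S}^{(j)}$ yields deterministic recursions expressing the $(a_{\max}+j)$-th column of $x$ from its own $k-1$ preceding entries together with the columns $a_{\min}+j,\dots,a_{\max}+j-1$ of $x$. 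Proceeding by induction on $j$, with the columns $<a_{\max}+j$ already $\ell$-periodic, one distinguishes whether $x$ is $(\ell,\mathcal{S}^{(j+1)})$-ambiguous --- in which case Step~1 applied to $\mathcal{S}^{(j+1)}$, whose set $\mathcal{S}^{(j+1)}\setminus\ell_{\mathcal{S}^{(j+1)}}$ contains the $(a_{\max}+j)$-th column with at least $k$ lattice points, forces that column to be $\ell$-periodic of period $\le k-1$ --- or $x$ is merely $(\ell,\mathcal{S}^{(j+1)},+)$- or $(\ell,\mathcal{S}^{(j+1)},-)$-semi-ambiguous, or neither, in which case the one-sided (half-line) version of Morse--Hedlund's Theorem, applied along the translates for which the relevant window stays in $G$, together with the recursions, shows that the $(a_{\max}+j)$-th column is pinned down by the periodic data on its left. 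Either way, column $a_{\max}+j$ is $\ell$-periodic, and a common period $h$ of all these columns witnesses that $x\sob{\mathcal{H}(\lle_{\mathcal{S}})}$ is $\ell$-periodic.

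The main obstacle is this last step. The recursions by themselves bound the period of each new column only by a fixed multiple of the periods of the earlier ones, so a direct induction produces periods growing geometrically in $j$. The point to exploit is the dichotomy: whenever $x$ is $(\ell,\mathcal{S}^{(j)})$-ambiguous, Step~1 forces the relevant periods back down to $\le k-1$, and whenever it is not, the one-sided Morse--Hedlund argument makes the new column \emph{inherit} rather than enlarge the period of the strip to its left. Organising these two cases so that a single bound, and hence a single period $h$, serves the whole half plane $\mathcal{H}(\lle_{\mathcal{S}})$ is the heart of the proof.
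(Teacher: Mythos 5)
This proposition is quoted in the paper from Cyr--Kra (Lemma~2.24 and part of the proof of Proposition~4.8 of \cite{van}); the paper gives no proof of its own, so your attempt can only be measured against the cited argument and against internal correctness. Your Step~1 is sound and is exactly the standard counting-plus-Morse--Hedlund argument (it also reappears in the paper's proof of Lemma~\ref{lem_subrll'egion}); the only slip there is the claim that every column of $\mathcal{S}$ meets at least $k$ lattice points --- concavity of the width function only guarantees $k-1$ points per column (e.g.\ $\mathcal{S}=\conv\{(0,0),(0,1),(3,5),(3,6)\}\cap\bb{Z}^2$ has interior columns of a single point while $k=k'=2$), but $k-1$ still suffices for Morse--Hedlund since $|G|\le k-1$.

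The genuine gap is Step~2, and you have in effect flagged it yourself by declaring that ``organising these two cases so that a single bound, and hence a single period $h$, serves the whole half plane'' is the heart of the proof: that organisation \emph{is} the content of the proposition beyond Step~1, and it is not supplied. Concretely: (a) your dichotomy is indexed so that the non-ambiguous branch does not address the column it is meant to produce. Non-ambiguity of $(\ell,\mathcal{S}^{(j+1)})$ gives a window of $\mathcal{S}^{(j+1)}\setminus\ell_{\mathcal{S}^{(j+1)}}$ (columns $a_{\min}+j+1,\dots,a_{\max}+j$) with a \emph{unique} extension, which constrains column $a_{\max}+j+1$, not column $a_{\max}+j$; so in the case ``$x$ is $(\ell,\mathcal{S}^{(j)})$-ambiguous but not $(\ell,\mathcal{S}^{(j+1)})$-ambiguous'' neither of your branches yields column $a_{\max}+j$. (b) The tool you invoke in the non-ambiguous branch (a one-sided Morse--Hedlund along translates staying in $G$) is not what closes it. What works is: a single uniquely-extending window, propagated up and down via the $\eta$-generated top and bottom vertices of the edge $\ell_{\mathcal{S}^{(j)}}\cap\mathcal{S}^{(j)}$, shows that the entire new column is a translation-equivariant \emph{function} of the columns to its left, hence inherits their period exactly, with no growth. (c) In the remaining case (ambiguous at level $j$), one still needs a uniform bound for column $a_{\max}+j$; this comes from noting that all $\mathcal{S}^{(j)}$-patterns along the orbit lie in a set of at most $2(k-1)$ patterns, and that the $k$-window of the new column evolves forward- and backward-deterministically over the period-$p$ strip refreshed by Step~1, giving period at most $2(k-1)p$ independently of $j$; only then does a single $h$ serve all of $\mathcal{H}(\lle_{\mathcal{S}})$. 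Without (a)--(c) the geometric growth of periods you worry about is not actually ruled out, so the proposal does not yet prove the statement.
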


Of course, a configuration is periodic with period parallel to \(\pmb{\ell}\) if and only if it is periodic with period parallel to \(-\pmb{\ell}\). 

We will need a definition of periodicity for configurations restricted to sets:

\begin{definition}
Let $\eta \in \mathcal{A}^{\bb{Z}^2}$ be a configuration and suppose $\mathcal{U} \subset \bb{Z}^2$ is a non-empty set. We say that \(\eta\sob{\mathcal{U}}\), the restriction of $\eta$ to $\mathcal{U}$, is \emph{periodic} if there exists a vector $h \in (\bb{Z}^2)^*$, called \emph{period of \(\eta\sob{\mathcal{U}}\)}, such that $\eta_{g+h} = \eta_{g}$ for all $g \in \mathcal{U} \cap (\mathcal{U}-h)$. If \(\eta\sob{\mathcal{U}}\) has two periods linearly independents over \(\bb{R}^2\), we say that \(\eta\sob{\mathcal{U}}\) is fully periodic.  
\end{definition}

The next result is a particular case of \cite[Lemma 39]{KariSzabados}.

\begin{proposition}[Kari and Szabados \cite{KariSzabados}]\label{prop_dec_period_half_plane}
Let $\eta \in \mathcal{A}^{\bb{Z}^2}$ be a configuration and sup\-pose  $\eta = \eta_1+\cdots+\eta_m$ is a $R$-periodic decomposition, $h_i \in \bb{Z}^2$ is a period for $\eta_i$ and $h_1, \ldots, h_m \in \bb{Z}^2$ vectors in pairwise distinct directions. If $\eta\sob{\mathcal{H}(\pmb{\ell})}$ is pe\-ri\-o\-dic with period parallel to some oriented line $\pmb{\ell} \subset \bb{R}^2$, then $\eta$ is periodic with period parallel to \(\pmb{\ell}\).
\end{proposition}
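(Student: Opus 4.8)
The plan is to prove the statement by induction on $m$, after isolating one elementary geometric observation, which I will call the \textbf{sliding lemma}. If $\mathcal{H}\subset\bb{R}^2$ is a closed half-plane (intersected with $\bb{Z}^2$) and $w$ is a nonzero vector parallel to the boundary line $\partial\mathcal{H}$, then $\mathcal{H}+w=\mathcal{H}$; hence, if $\zeta\in R[[X^{\pm 1}]]$ is periodic of period $h$ with $h$ \emph{not} parallel to $\partial\mathcal{H}$ and $\zeta\sob{\mathcal{H}}=0$, then $\zeta=0$, since for every $g\in\bb{Z}^2$ the line $g+\bb{Z}h$ meets $\mathcal{H}$ and so $\zeta_g=\zeta_{g+th}=0$ for a suitable $t$. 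Two immediate by-products, obtained from $\mathcal{H}+w=\mathcal{H}$, are: ``$\eta\sob{\mathcal{H}}$ is $v$-periodic'' is equivalent to ``$(X^{v}-1)\eta$ vanishes on $\mathcal{H}$'' (for $v\parallel\partial\mathcal{H}$), and if $\eta\sob{\mathcal{H}}$ is $v$-periodic then it is $kv$-periodic for every $k\geq 1$.

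To run the induction I will prove the slightly more flexible statement: if $\eta=\eta_1+\cdots+\eta_m$ is an $R$-periodic decomposition with pairwise linearly independent periods $h_i$, if $\mathcal{H}$ is any closed half-plane and $v\neq 0$ is parallel to $\partial\mathcal{H}$, and if $\eta\sob{\mathcal{H}}$ is $v$-periodic, then $\eta$ has a period parallel to $\partial\mathcal{H}$. Taking $\mathcal{H}=\mathcal{H}(\ell)$ then yields the proposition, because $\partial\mathcal{H}(\ell)$ is parallel to $\ell$ and $\ell$ passes through the origin, so a period of $\eta$ parallel to $\partial\mathcal{H}(\ell)$ lies on $\ell$. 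For the base case $m=1$: if $h_1\parallel\ell$ there is nothing to prove; otherwise $(X^{v}-1)\eta$ vanishes on $\mathcal{H}$, is $h_1$-periodic, and $h_1\not\parallel\partial\mathcal{H}$, so the sliding lemma gives $(X^{v}-1)\eta=0$.

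For the inductive step ($m\geq 2$): pairwise independence forces at most one of the $h_i$ to be parallel to $\ell$, so I may pick an index $j$ with $h_j\not\parallel\partial\mathcal{H}$. Put $\zeta:=(X^{h_j}-1)\eta=\sum_{i\neq j}(X^{h_j}-1)\eta_i$; each $(X^{h_j}-1)\eta_i$ is periodic of period $h_i$ (the relevant factors commute in $R[X^{\pm1}]$), so this is an $R$-periodic decomposition of $\zeta$ into $m-1$ summands with pairwise independent periods. Moreover $\zeta\sob{\mathcal{H}'}$ is $v$-periodic where $\mathcal{H}':=\mathcal{H}\cap(\mathcal{H}+h_j)$, and since $h_j\not\parallel\partial\mathcal{H}$ this $\mathcal{H}'$ is again a closed half-plane with $\partial\mathcal{H}'$ parallel to $\partial\mathcal{H}$. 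By the induction hypothesis $\zeta$ has a period $v'$ parallel to $\partial\mathcal{H}$. Choosing a nonzero vector $V$ parallel to $\partial\mathcal{H}$ that is a common integer multiple of $v$ and $v'$, we get that $\eta\sob{\mathcal{H}}$ is $V$-periodic and $\zeta$ is $V$-periodic. Finally set $F:=(X^{V}-1)\eta$: from $(X^{h_j}-1)F=(X^{V}-1)\zeta=0$ the configuration $F$ is $h_j$-periodic, and $F$ vanishes on $\mathcal{H}$ because $\eta\sob{\mathcal{H}}$ is $V$-periodic; as $h_j\not\parallel\partial\mathcal{H}$, the sliding lemma forces $F=0$, i.e.\ $\eta$ is $V$-periodic with $V$ parallel to $\partial\mathcal{H}$, completing the induction.

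The routine verifications are: the two equivalences quoted in the first paragraph; that $\mathcal{H}\cap(\mathcal{H}+h_j)$ is a closed half-plane with boundary parallel to $\partial\mathcal{H}$ exactly when $h_j\not\parallel\partial\mathcal{H}$; and that the summands of $\zeta$ form a genuine periodic decomposition with pairwise independent periods. The only place that needs real care — and the closest thing to an obstacle — is keeping the half-planes aligned through the induction: $\partial\mathcal{H}'$ need not pass through the origin, which is precisely why the induction is carried out on the flexible statement above rather than on the proposition as literally stated; once that is set up correctly, every step is formal.
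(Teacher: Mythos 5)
Your proof is correct, and it takes a genuinely different route from the paper's. The paper argues dynamically: it splits on whether $\ell\in\nexpd(\eta)$; the expansive case is immediate, and otherwise it uses Remark~\ref{rem_minperdecomandperiods} to single out the summand $\eta_m$ whose period lies on $\ell$, subtracts it, invokes Lemma~\ref{lem_periods_annihilator} to see that $\ell$ is one-sided expansive on $X_{\eta-\eta_m}$, and then promotes the half-plane period of $(\eta-\eta_m)\sob{\mathcal{H}(\ell)}$ to a global period by expansiveness, finally reducing the case $R=\bb{Z}$ modulo a prime so that the subtracted configuration lives over a finite alphabet and the subshift machinery applies. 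Your induction on $m$ replaces all of this with the sliding lemma plus the identity $(X^{h_j}-1)F=(X^{V}-1)\zeta=0$: no expansiveness, no compactness, no finiteness of the alphabet, no mod-$p$ step, and the statement you actually prove (arbitrary half-plane, conclusion a period parallel to its boundary) is slightly more flexible than the proposition as stated. The deferred verifications are indeed routine, and the genuinely delicate point --- that $\mathcal{H}\cap(\mathcal{H}+h_j)$ is again a half-plane with boundary parallel to $\partial\mathcal{H}$ when $h_j$ is transverse, so the induction must be run over half-planes rather than lines through the origin --- is precisely the one you flag and resolve by strengthening the induction hypothesis. What the paper's route buys is brevity given machinery it needs anyway (Lemma~\ref{lem_periods_annihilator} and the one-sided expansiveness formalism); what yours buys is a self-contained, purely algebraic argument of wider applicability (e.g.\ it is indifferent to infinite alphabets among the $\eta_i$). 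One incidental remark: you read $\mathcal{H}(\ell)$ as the half-plane bounded by a line parallel to $\ell$, which is the intended Cyr--Kra convention and the one the paper uses throughout, even though the displayed formula defining $\mathcal{H}(\ell)$ takes $u$ parallel to $\ell$ rather than normal to it.
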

\begin{proof}
Translating $\eta$ if necessary, we may assume that $\pmb{\ell}$ through the origin. The case where $\pmb{\ell} \not\in \nexpd(\eta)$ is straightforward. Indeed, let $h \in \bb{Z}^2$ be a period for $\eta \sob{\mathcal{H}(\pmb{\ell})}$ parallel to \(\pmb{\ell}\). Since the restrictions of $\eta$ and $T^h\eta$ to the half plane $\mathcal{H}(\pmb{\ell})$ coincide, by expansiveness, one has $T^h\eta = \eta$, which concludes this case. From now on, we will suppose $\pmb{\ell} \in \nexpd(\eta)$. In particular, as $(X^{h_1}-1) \cdots (X^{h_m}-1) \in \ann_R(\eta)$, from Lemma~\ref{lem_periods_annihilator} we get that $\pmb{\ell}$ contains some vector $h_j$, with $1 \leq j \leq m$. If $m = 1$ there is nothing to argue. Thus we may consider $m \geq 2$ and, renaming if necessary, we may assume $j = m$. Suppose initially that $R$ is a finite field. Since $(X^{h_1}-1) \cdots (X^{h_{m-1}}-1) \in \ann_R(\eta-\eta_m)$ and \(\pmb{\ell}\) does not contain any of the vectors $h_1, \ldots, h_{m-1} \in \bb{Z}^2$, from Lemma~\ref{lem_periods_annihilator} we conclude that $\pmb{\ell} \not\in \nexpd(\eta-\eta_m)$. Hence, as $(\eta-\eta_m)\sob{\mathcal{H}(\pmb{\ell})}$ is periodic with period parallel to $\pmb{\ell}$, by expansiveness results that $\eta-\eta_m$ is periodic with period parallel to $\pmb{\ell}$ and so that $\eta$ is periodic with period parallel to $\pmb{\ell}$. The case where $R = \bb{Z}$ follows from the previous case: changing the alphabet if necessary, let $p \in \bb{N}$ be a prime number such that $\mathcal{A} \subset \bb{Z}_p$ and consider the $\bb{Z}_p$-pe\-ri\-o\-dic decomposition $\eta = \overline{\eta}_1+\cdots+\overline{\eta}_m$, where $(\overline{\eta}_i)_g := (\eta_i)_g \mod p$ for all $g \in \bb{Z}^2$.
\end{proof}

\section{Preliminaries and proof of Theorem \ref{secondary_thm}}
\label{sec3}

\begin{definition}\label{ll'maximalperconf}
Let $\pmb{\ell}, \pmb{\ell}' \subset \bb{R}^2$ be rational oriented lines in distinct directions. A convex set $\mathcal{R} \subset$ $\bb{Z}^2$ (positively oriented) with two semi-infinite edges parallels to $\pmb{\ell}$ and $\pmb{\ell}'$, respectively, is called an \emph{$(\pmb{\ell},\pmb{\ell}')$-region} if \(w \prec \cdots \prec w'\), where \(w \in E(\mathcal{R})\) is the edge parallel to $\pmb{\ell}$ and \(w' \in E(\mathcal{R})\) is the edge parallel to $\pmb{\ell}'$.
\end{definition}

In the previous definition, \(w \prec \cdots \prec w'\) means that, following the orientation of \(\mathcal{R}\), the edge of \(\mathcal{R}\) parallel to $\pmb{\ell}$ comes first than the edge of \(\mathcal{R}\) parallel to $\pmb{\ell}'$ (see Figure~\ref{llmaxreg}). 

\begin{figure}[!htbp]
	\centering
	\begin{minipage}[b]{0.49\linewidth}
		\centering\includegraphics[width=5cm]{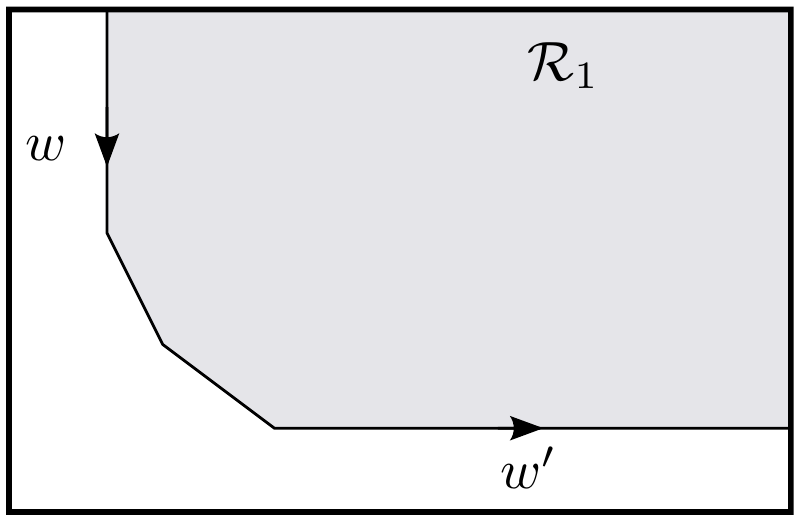}\\
		\small{(A) $\mathcal{R}_1$ is an $(\pmb{\ell},\pmb{\ell}')$-region.}
	\end{minipage} 
	\hfill
	\begin{minipage}[b]{0.49\linewidth}
		\centering\includegraphics[width=5cm]{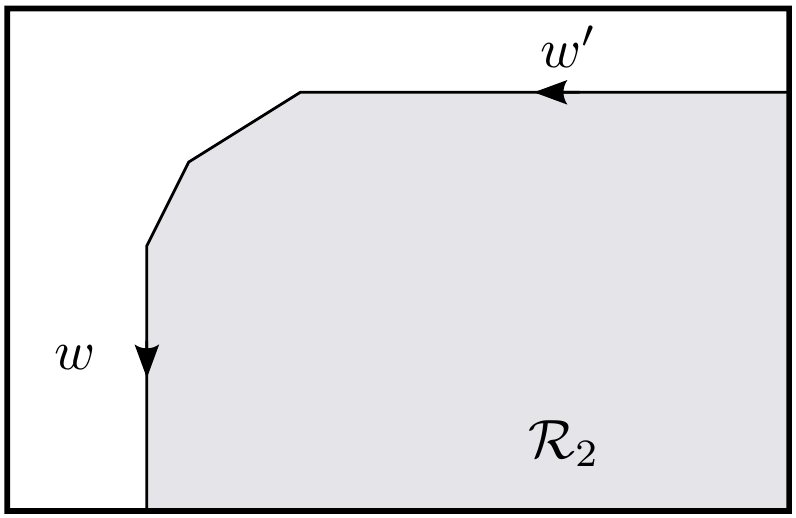}\\
		\small{(B) $\mathcal{R}_2$ is an $(-\pmb{\ell}'\!,\pmb{\ell})$-region.}
	\end{minipage}
	\hfill
	\vspace{0.4cm}
	\begin{minipage}[b]{0.49\linewidth}
		\centering\includegraphics[width=5cm]{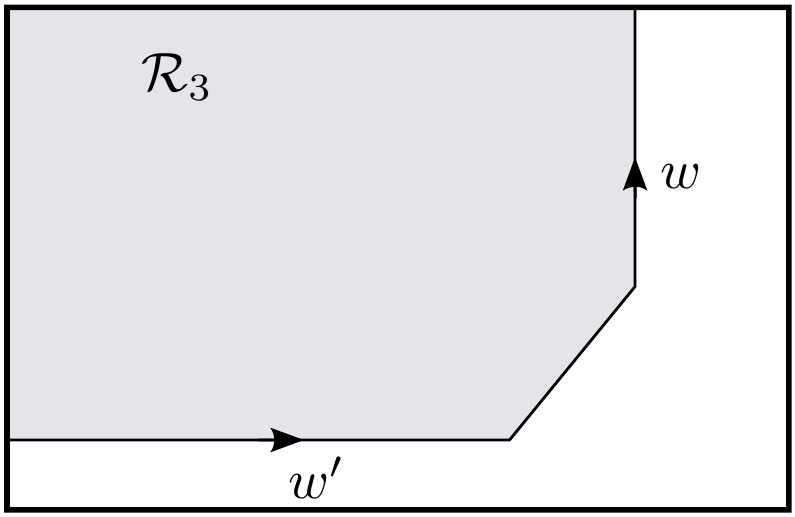}\\		
		\small{(C) $\mathcal{R}_3$ is an $(\pmb{\ell}'\!,-\pmb{\ell})$-region.}
	\end{minipage}
	\hfill
	\begin{minipage}[b]{0.49\linewidth}
		\centering\includegraphics[width=5cm]{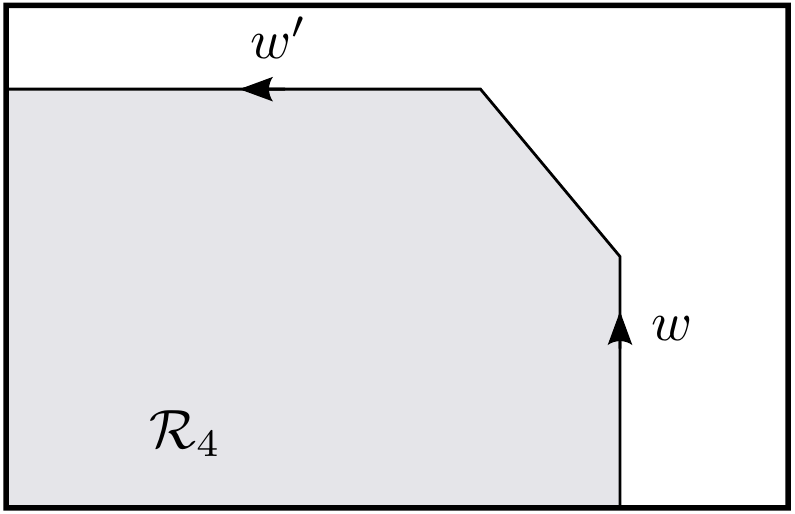}\\
		\small{(D) $\mathcal{R}_4$ is an $(-\pmb{\ell},-\pmb{\ell}')$-region.}
	\end{minipage}
	\caption{Fixed two oriented lines, we have the four types of infinite convex regions. In each case, the semi-infinite edges $w,w' \in E(\mathcal{R}_i)$, $ i = 1, 2, 3, 4 $, are parallel to the corresponding oriented lines.}
	\label{llmaxreg}
\end{figure}

\begin{definition}
Let $\mathcal{U} \subset \bb{Z}^2$ be a finite, convex set such that $\conv(\mathcal{U})$ has positive area. A convex set $\mathcal{T} \subset \bb{Z}^2$ is said to be weakly $E(\mathcal{U})$-enve\-loped if, for every edge $\varpi \in E(\mathcal{T})$, there exists an edge $w \in E(\mathcal{U})$ parallel to $\varpi$ with $|w \cap \mathcal{U}| \leq |\varpi \cap \mathcal{T}|$. A set $\mathcal{T} \subset \bb{Z}^2$ weakly $E(\mathcal{U})$-enveloped where $|E(\mathcal{T})| = |E(\mathcal{U})|$ is said to be $E(\mathcal{U})$-en\-ve\-lo\-ped.
\end{definition}

\begin{notation}\label{def_next_line}
Let $\pmb{\ell} \subset \bb{R}^2$ be a rational oriented line with $\pmb{\ell} \cap \bb{Z}^2 \neq \emptyset$. We use $\pmb{\ell}^{(-)}$\! to denote the oriented line $\pmb{\ell}'  \subset \bb{R}^2$ parallel to $\pmb{\ell}$ closest of $\mathcal{H}(\pmb{\ell})$ such that $\pmb{\ell}' \cap \bb{Z}^2 \neq \emptyset$ and $\mathcal{H}(\pmb{\ell}) \cap \pmb{\ell}' = \emptyset$ (see Figure \ref{fig3}).
\end{notation}

\begin{figure}[ht]
	\centering\includegraphics[width=6cm]{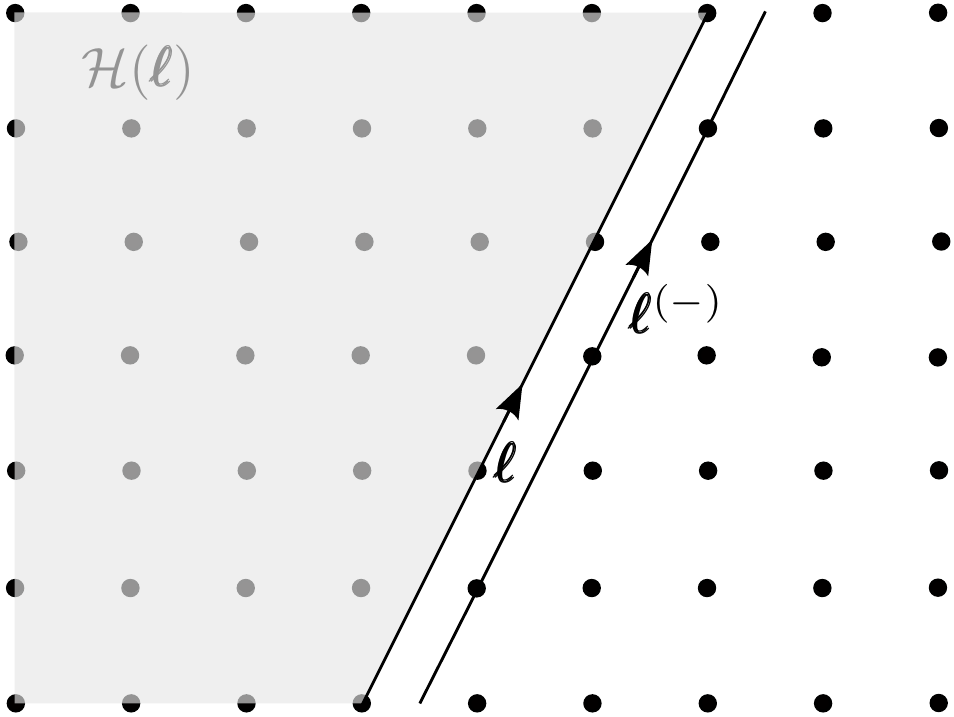}\\
	\caption{The oriented lines $\pmb{\ell}$ and $\pmb{\ell}^{(-)}$ and the half plane $\mathcal{H}(\pmb{\ell})$.}
	\label{fig3}
\end{figure}

\begin{definition}
Let \(\pmb{\ell} \subset \bb{R}^2\) be a rational oriented line and suppose $B \subset \bb{Z}^2$ is a non-empty, finite, convex set. The \emph{half-strip from $B$ in the direction of $\pmb{\ell}$} is defined as \[H_{B}(\pmb{\ell}) := \{g+t\vec{v}_{\pmb{\ell}} \in \bb{Z}^2 : g \in B, \ t \in \bb{Z}_+\}.\]
\end{definition}

For a rational oriented line \(\pmb{\ell} \subset \bb{R}^2\) and a non-empty, finite, convex set \(B \subset \bb{Z}^2\), we may consider the half strips \(H_B(\pmb{\ell})\) and \(H_B(-\pmb{\ell})\) (see Figure \ref{def_halfstrip}).

\begin{figure}[ht]
	\centering\includegraphics[width=12.4cm]{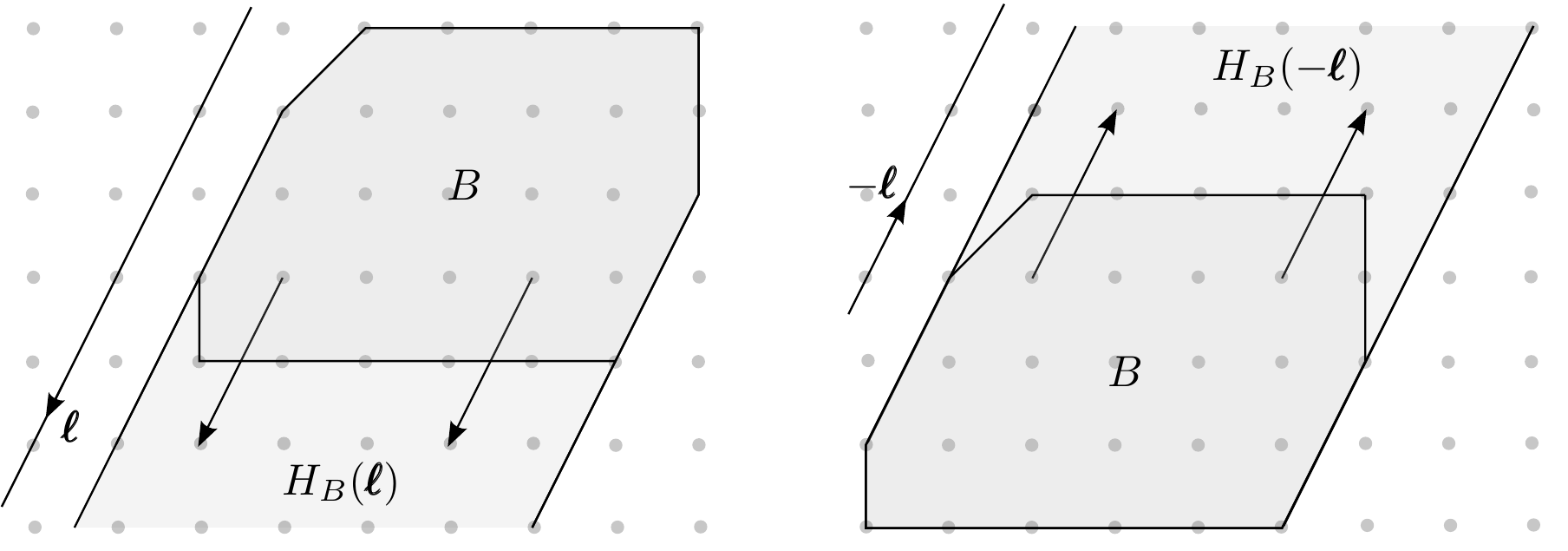}\\
	\caption{The set \(B \subset \bb{Z}^2\), the oriented lines \(\pmb{\ell}\) and \(-\pmb{\ell}\) and the half strips \(H_B(\pmb{\ell})\) and \(H_B(-\pmb{\ell})\).}
	\label{def_halfstrip}
\end{figure}

In the next lemma, \((\pmb{\ell}_{\iota})_{B'} = (\pmb{\ell}_{\iota})_{B}\) means that the support lines of \(B'\) and \(B\) de\-ter\-mi\-ned by \(\pmb{\ell}_{\iota}\) coincide.     

\begin{lemma}\label{tech_lemma}
Let $\eta \in \mathcal{A}^{\bb{Z}^2}$ be a configuration with \(\varphi(X) = (X^{h_1}-1) \cdots (X^{h_m}-1) \in\) \(\ann_{R}(\eta)\), where \(h_1, \ldots, h_m \in \bb{Z}^2\), with \(m \geq 2\), are vectors in pairwise distinct directions. Suppose $\pmb{\ell}_1,\ldots,\pmb{\ell}_{2m} \subset \bb{R}^2$ is an enumeration of the oriented lines through the origin parallels to the edges of $\mathcal{S}_{\varphi}$ where the edge parallel to $\pmb{\ell}_{i+1}$ is a successor of the edge parallel to $\pmb{\ell}_{i}$ and indices are taken modulo $2m$. Given $1 \leq \iota \leq 2m$, suppose \(x_{per} \in X_{\eta}\) is a periodic configuration with period parallel to \(\pmb{\ell}_{\iota}\).
\begin{enumerate}[(i)]\setlength{\itemsep}{5pt}
	\item If for each $E(\mathcal{S}_{\varphi})$-enveloped set $B' \subset \bb{Z}^2$ there exist an $E(\mathcal{S}_{\varphi})$-enveloped set $B \supset B'$, with \((\pmb{\ell}_{\iota})_{B'} = (\pmb{\ell}_{\iota})_{B}\), and $u \in \bb{Z}^2$ such that $$(T^u\eta)\sob{B} = x_{per}\sob{B}, \ \ \text{but} \ \ (T^u\eta)\sob{H_{B}(\pmb{\ell}_{\iota})} \neq x_{per}\sob{H_{B}(\pmb{\ell}_{\iota})},$$ then there exist an \((\pmb{\ell}_{\iota},\pmb{\ell}_J)\)-region \(\hat{A}_{\infty}\), with \(\iota+1 \leq J \leq \iota+m-1\), a configuration \(\vartheta \in X_{\eta}\) and \(\epsilon \in \bb{Z}_+\) such that
	\begin{equation}\label{eq_existence_tau_lem1}
		\vartheta\sob{\hat{A}_{\infty}^{(\epsilon)}} = \hat{x}_{per}\sob{\hat{A}_{\infty}^{(\epsilon)}}, \ \ \textrm{but} \ \ \vartheta\sob{\hat{A}_{\infty}^{(\epsilon+1)}} \neq \hat{x}_{per}\sob{\hat{A}_{\infty}^{(\epsilon+1)}},
	\end{equation}
	where $\hat{x}_{per} := T^{k\vec{v}_{\ell}}x_{per}$ for some \(k \in \bb{Z}_+\), for each $\epsilon \in \bb{Z}_+$, \[\hat{A}_{\infty}^{(\epsilon)} := \{g+t\vec{v}_{\pmb{\ell}_{J-1}} : g \in \hat{A}_{\infty}, \ t \in \bb{Z}_+, \ \dist(g+t\vec{v}_{\pmb{\ell}_{J-1}},\pmb{\ell}_J) \leq d_{\epsilon}\}\] is an \((\pmb{\ell}_{\iota},\pmb{\ell}_J)\)-region and $0 = d_0 < d_1 < \cdots < d_n < \cdots$ is the sequence where, for each $g \in \mathcal{H}(\pmb{\ell}_J)$, there exists $i \in \bb{Z}_+$ such that $\dist(g,\pmb{\ell}_J) = d_i$.

	\item If for each $E(\mathcal{S}_{\varphi})$-enveloped set $B' \subset \bb{Z}^2$ there exist an $E(\mathcal{S}_{\varphi})$-enveloped set $B \supset B'$, with \((\pmb{\ell}_{\iota})_{B'} = (\pmb{\ell}_{\iota})_{B}\), and $u \in \bb{Z}^2$ such that $$(T^u\eta)\sob{B} = x_{per}\sob{B}, \ \ but \ \ (T^u\eta)\sob{H_{B}(-\pmb{\ell}_{\iota})} \neq x_{per}\sob{H_{B}(-\pmb{\ell}_{\iota})},$$ then there exist an \((\pmb{\ell}_J,\pmb{\ell}_{\iota})\)-region \(\hat{A}_{\infty}\), with \(\iota+m+1 \leq J \leq \iota+2m-1\), a configuration \(\vartheta \in X_{\eta}\) and \(\epsilon \in \bb{Z}_+\) such that
	\begin{equation}\label{eq_existence_tau_lem2}
		\vartheta\sob{\hat{A}_{\infty}^{(\epsilon)}} = \hat{x}_{per}\sob{\hat{A}_{\infty}^{(\epsilon)}}, \ \ \textrm{but} \ \ \vartheta\sob{\hat{A}_{\infty}^{(\epsilon+1)}} \neq \hat{x}_{per}\sob{\hat{A}_{\infty}^{(\epsilon+1)}},
	\end{equation}
	where $\hat{x}_{per} := T^{k\vec{v}_{\ell}}x_{per}$ for some \(k \in \bb{Z}_+\), for each $\epsilon \in \bb{Z}_+$, \[\hat{A}_{\infty}^{(\epsilon)} := \{g+t\vec{v}_{\pmb{\ell}_{J-1}} : g \in \hat{A}_{\infty}, \ t \in \bb{Z}_+, \ \dist(g+t\vec{v}_{\pmb{\ell}_{J-1}},\pmb{\ell}_J) \leq d_{\epsilon}\}\] is an \((\pmb{\ell}_J,\pmb{\ell}_{\iota})\)-region and $0 = d_0 < d_1 < \cdots < d_n < \cdots$ is the sequence where, for each $g \in \mathcal{H}(\pmb{\ell}_J)$, there exists $i \in \bb{Z}_+$ such that $\dist(g,\pmb{\ell}_J) = d_i$. 
\end{enumerate} 
\end{lemma}
\begin{proof}
We will prove item (i). The proof of item (ii) is similar. To simplify the notation, we will write \(\pmb{\ell} = \pmb{\ell}_{\iota}\). Let $B' \subset \bb{Z}^2$, with $(0,0) \in B'$, be an $E(\mathcal{S}_{\varphi})$-enve- loped such that the support line of \(B'\) determined by \(\pmb{\ell}\) coincides with $\pmb{\ell}^{(-)}$ and let $A_0 \subset B'$ be a non-empty set. The assumption on item (i) allows us to construct a sequence of $E(\mathcal{S}_{\varphi})$-enveloped sets \[B' \subset B_1 \subset A_{1} \subset B_2 \subset A_2 \subset \cdots \subset B_i \subset A_i \subset \cdots\] such that, for each $i \in \bb{N}$,

\medbreak 
\begin{enumerate}[(i)]\setlength{\itemsep}{5pt}
	\item $B_i \subset \bb{Z}^2$ is an $E(\mathcal{S}_{\varphi})$-enveloped set with $B_i \cap \pmb{\ell}_{ B_i} \subset \pmb{\ell}^{(-)}$,
	\item $B_i$ contains both $A_{i-1}$ and $[-i+1,i-1]^2 \cap \mathcal{H}(\pmb{\ell}^{(-)})$,
	\item $(T^{u_i}\eta)\sob{B_i} = x_{per}\sob{B_i}$ for some $u_i \in \bb{Z}^2$, but $(T^{u_i}\eta)\sob{H_{B_i}(\pmb{\ell})} \neq x_{per}\sob{H_{B_i}(\pmb{\ell})}$,
	\item fixed a sequence $(u_i)_{i \in \bb{N}} \subset \bb{Z}^2$ fulfilling the previous item, $A_i$ is a maximal set with respect to partial ordering by inclusion among all $E(\mathcal{S}_{\varphi})$-enveloped sets $\mathcal{T} \subset \bb{Z}^2$ such that $B_i \subset \mathcal{T} \subset H_{B_i}(\pmb{\ell})$ and $(T^{u_i}\eta)\sob{\mathcal{T}} = x_{per}\sob{\mathcal{T}}$ (see Fi\-gu\-re~\ref{maxsetsAi}).
\end{enumerate}

\begin{figure}[ht]
	\centering\includegraphics[width=7.4cm]{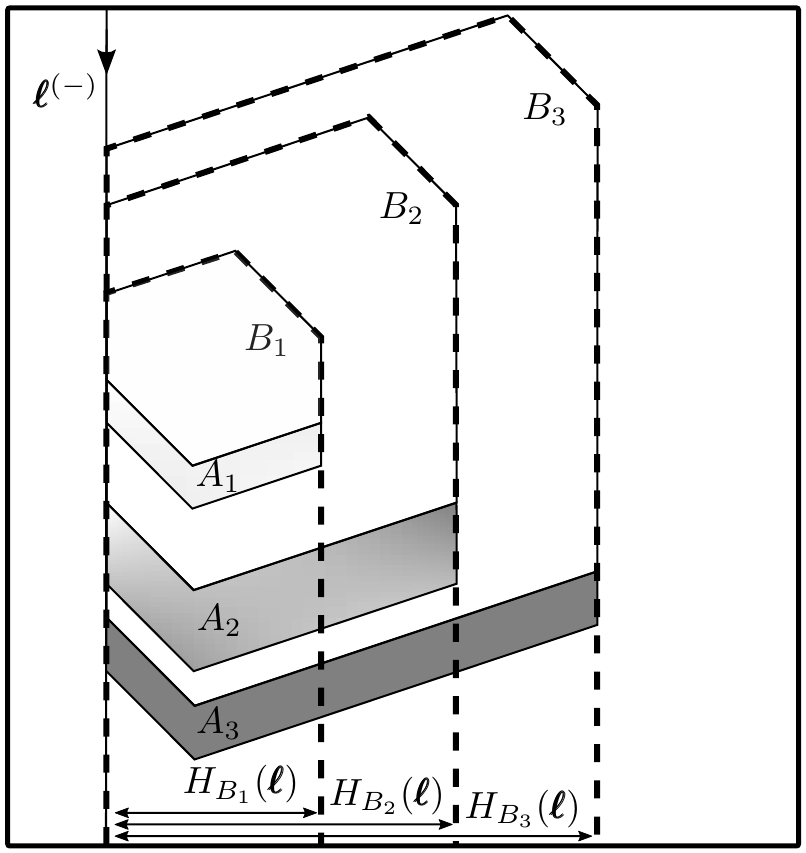}\\
	\caption{The sets $B_1 \subset A_1 \subset B_2 \subset A_2 \subset \cdots$.}
	\label{maxsetsAi}
\end{figure}

Let $g_1 \in A_1$ denote the final point of $A_1 \cap \pmb{\ell}^{(-)}$ with respect to the orientation of $\pmb{\ell}^{(-)}$ and, for each $i > 1$, let $k_i \in \bb{N}$ be such that the final point of $(A_i-k_i\vec{v}_{\pmb{\ell}}) \cap \pmb{\ell}^{(-)}$ with respect to the orientation of $\pmb{\ell}^{(-)}$ coincides with $g_1$. Setting $\hat{A}_i := A_i-k_i\vec{v}_{\pmb{\ell}}$ for all $i \in \bb{N}$, where $k_1 = 0$, then \[(T^{k_i\vec{v}_{\pmb{\ell}}+u_i}\eta)\sob{\hat{A}_i} = (T^{u_i}\eta)\sob{A_i} = x_{per}\sob{A_i} = (T^{k_i\vec{v}_{\pmb{\ell}}}x_{per})\sob{\hat{A}_i}.\] It is easy to see that each $\hat{A}_i$ is a maximal set among all $E(\mathcal{S}_{\varphi})$-enveloped sets $\mathcal{T} \subset \bb{Z}^2$ such that \[B_{i}-k_i\vec{v}_{\pmb{\ell}} \subset \mathcal{T} \subset H_{B_i}(\pmb{\ell})-k_i\vec{v}_{\pmb{\ell}} \quad \text{and} \quad (T^{k_i\vec{v}_{\pmb{\ell}}+u_i}\eta)\sob{\mathcal{T}} = (T^{k_i\vec{v}_{\pmb{\ell}}}x_{per})\sob{\mathcal{T}}.\] Since $x_{per} \in X_{\eta}$ is a periodic configuration with period parallel to $\pmb{\ell}$, then there exists an integer $k \geq 0$ such that $T^{k\vec{v}_{\pmb{\ell}}}x_{per} = T^{k_i\vec{v}_{\pmb{\ell}}}x_{per}$ for infinitely many $i$. By passing to a subsequence, we can as\-sume this holds for all $i$.

Let $w_i(j) \in E(\hat{A}_i)$ denote the edge of $\hat{A}_i$ that is parallel to the oriented line $\pmb{\ell}_j$. Since $\bigcup_{i=1}^{\infty} A_i = \mathcal{H}(\pmb{\ell}^{(-)})$, let $\iota+1 \leq J \leq \iota+m-1$ be the smallest integer such that \[\left|\hat{A}_i \cap w_i(J)\right| < \left|\hat{A}_{i+1} \cap w_{i+1}(J)\right|\] for infinity many $i$. By passing to a subsequence, we can assume that this holds for all $i$. If $J > \iota+1$, we also may assume that \[|\hat{A}_i \cap w_i(j)| = |\hat{A}_{i+1} \cap w_{i+1}(j)|\] for every $\iota+1 \leq j \leq J-1$ and all $i$. In particular, $\hat{A}_{\infty} := \bigcup_{i=1}^{\infty} \hat{A}_i$ is a weakly $E(\mathcal{S}_{\varphi})$-enveloped set (see Figure \ref{maxsetsAihat}), with two semi-infinite edges, one of which is parallel to $\pmb{\ell}$ and the other one is parallel to \(\pmb{\ell}_J\). Actually, \(\hat{A}_{\infty}\) is an \((\pmb{\ell},\pmb{\ell}_J)\)-region. 

\begin{figure}[ht]
	\centering\includegraphics[width=7cm]{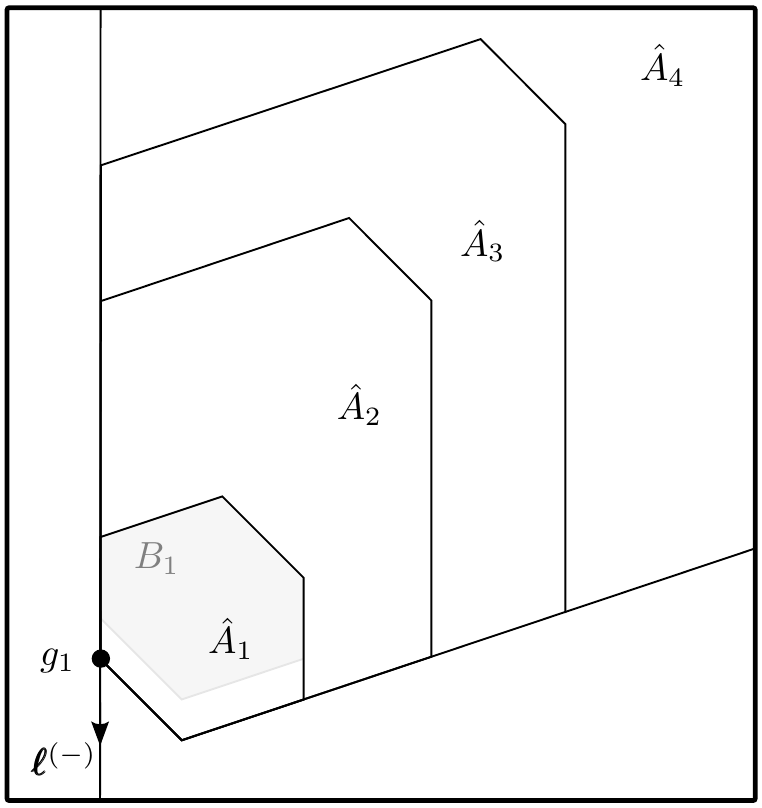}\\
	\caption{The sets $B_1 \subset \hat{A}_1 \subset \hat{A}_2 \subset \cdots \subset \hat{A}_{\infty}$.}
	\label{maxsetsAihat}
\end{figure}

We define $\vartheta_i := T^{k\vec{v}_{\pmb{\ell}}+u_i}\eta$ for all $i$ and $\hat{x}_{per} := T^{k\vec{v}_{\pmb{\ell}}}x_{per}$. Since $\vartheta_{i'}\sob{\hat{A}_i} = \vartheta_i\sob{\hat{A}_i}$ for all $i \leq i'$, by compactness of $X_{\eta}$, the sequence $(\vartheta_i)_{i \in \bb{N}}$ has an accumulation point $\vartheta \in X_{\eta}$ such that $\vartheta\sob{\hat{A}_{\infty}} = \hat{x}_{per}\sob{\hat{A}_{\infty}}$. In particular, one has that  $\vartheta\sob{\hat{A}_{\infty}}$ is a perio\-dic con\-fig\-ur\-ation with period parallel to $\pmb{\ell}$.

We claim that there exists $\epsilon \in \bb{Z}_+$ such that
\begin{equation*}
	\vartheta\sob{\hat{A}_{\infty}^{(\epsilon)}} = \hat{x}_{per}\sob{\hat{A}_{\infty}^{(\epsilon)}}, \ \ \textrm{but} \ \ \vartheta\sob{\hat{A}_{\infty}^{(\epsilon+1)}} \neq \hat{x}_{per}\sob{\hat{A}_{\infty}^{(\epsilon+1)}}.
\end{equation*}
Indeed, suppose, by contradiction, that $\vartheta\sob{\hat{A}_{\infty}^{(\epsilon)}} = \hat{x}_{per}\sob{\hat{A}_{\infty}^{(\epsilon)}}$ for all $\epsilon \in \bb{Z}_+$. Note that, for an appropriate $\epsilon \in \bb{N}$ fixed and all $i \in \bb{N}$ sufficiently large, \[\hat{A}_i^{(\epsilon)} := \{g-t\vec{v}_{\pmb{\ell}_{J+1}} \in \hat{A}^{(\epsilon)}_{\infty} : g \in \hat{A}_i, \ t \in \bb{Z}_+\}\] is an $E(\mathcal{S}_{\varphi})$-enveloped set. Furthermore, one has $B_i-k_i\vec{v}_{\pmb{\ell}} \subset \hat{A}_i^{(\epsilon)} \subset H_{B_i}(\pmb{\ell})-k_i\vec{v}_{\pmb{\ell}}$. So let $\epsilon \in \bb{N}$ and $i_0 \in \bb{N}$ be such that $\hat{A}_{i_0}^{(\epsilon)}$ is an $E(\mathcal{S}_{\varphi})$-en\-ve\-lo\-ped set and consider a constant $I_0 \in \bb{N}$ such that \[\vartheta\sob{\hat{A}_{i_0}^{(\epsilon)}} = \vartheta_i\sob{\hat{A}_{i_0}^{(\epsilon)}} \quad \forall \ i \geq I_0.\] From our assumption it follows that $\vartheta\sob{\hat{A}^{(\epsilon)}_{i}} =$ $\hat{x}_{per}\sob{\hat{A}^{(\epsilon)}_{i}}$ for all $i$. Hence, if we consider $i \geq \max\{i_0,I_0\}$, then $\hat{A}_{i}^{(\epsilon)}$ is an $E(\mathcal{S}_{\varphi})$-en\-ve\-lo\-ped set and $\vartheta_i\sob{\hat{A}^{(\epsilon)}_{i_0}} = \hat{x}_{per}\sob{\hat{A}^{(\epsilon)}_{i_0}}$, which yields
\begin{equation}\label{eq_cont_case2}
	\vartheta_i\sob{\hat{A}_{i} \cup \hat{A}^{(\epsilon)}_{i_0}} = \hat{x}_{per}\sob{\hat{A}_{i} \cup \hat{A}^{(\epsilon)}_{i_0}}.
\end{equation} 
Since $\mathcal{S}_{\varphi}$ is $\eta$-generating, by induction we get from (\ref{eq_cont_case2}) that $\vartheta_i\sob{\hat{A}^{(\epsilon)}_{i}} = \hat{x}_{per}\sob{\hat{A}^{(\epsilon)}_{i}}$, which contradicts the maximality of $\hat{A}_{i}$ and concludes the proof.
\end{proof}

The proof of Theorem \ref{secondary_thm} will be done by contradiction. The main idea is  to reach a contradiction by showing the existence of a line \(\ell' \in \nexpl(\eta)\) such that \(-\pmb{\ell}',\pmb{\ell}' \in \nexpd(\eta)\).  

\subsection{Proof of Theorem \ref{secondary_thm}} 

Suppose, by contradiction, that \(\eta\) is not fully periodic. In particular, as $-\pmb{\ell} \not\in \nexpd(\eta)$ or $\pmb{\ell} \not\in \nexpd(\eta)$ for all lines $\ell \subset \bb{R}^2$ through the origin, Proposition \ref{pps_par_antipar_exp} implies that \(\eta\) is non-periodic. 

According to Theorem \ref{EtienneMainThm}, there exists a configuration \(x_{per} \in X_{\eta}\) where, for every line $\ell \subset \bb{R}^2$ through the origin, $-\pmb{\ell} \not\in \nexpd(x_{per})$ whenever $\pmb{\ell} \not\in \nexpd(x_{per})$. Since $-\pmb{\ell} \not\in \nexpd(x_{per})$ or $\pmb{\ell} \not\in \nexpd(x_{per})$ for all lines $\ell \subset \bb{R}^2$ through the origin, we get that all oriented lines through the origin are one-sided expansive directions on \(\overline{Orb\,(x_{per})}\), which due to the Boyle-Lind Theorem means that \(x_{per}\) is fully periodic.

Let \(\eta = \eta_1+\cdots+\eta_m\) be a \(\bb{Z}\)-minimal periodic decomposition (see Theorem \ref{theorKS}), \(h_i \in \bb{Z}^2\) a period for \(\eta_i\), with \(1 \leq i \leq m\), and \(\varphi(X) := (X^{h_1}-1) \cdots (X^{h_m}-1)\). Suppose $\pmb{\ell}_1,\ldots,\pmb{\ell}_{2m} \subset \bb{R}^2$ is an enumeration of the oriented lines through the origin parallels to the edges of $\mathcal{S}_{\varphi}$ where the edge parallel to $\pmb{\ell}_{i+1}$ is a successor of the edge parallel to $\pmb{\ell}_{i}$ and indices are taken modulo $2m$. Renaming the vectors \(h_1, \ldots, h_m\) if necessary, we may assume that \(h_i\) is either parallel or antiparallel to \(\pmb{\ell}_i\) for every \(1 \leq i \leq m\). 

As \(x_{per}\) is fully periodic, then \(x_{per}\) is periodic with period parallel to any \(\pmb{\ell}_i\), with \(1 \leq i \leq m\).

\begin{claim}\label{claim_coincidebutdiff}
Given an \(E(\mathcal{S}_{\varphi})\)-enveloped set $B \subset \bb{Z}^2$, there exists $u \in \bb{Z}^2$ such that
\begin{equation}\label{eq_main_cor_equalbutdiff}
(T^u\eta)\sob{B} = x_{per}\sob{B}, \ \ \text{but} \ (T^u\eta)\sob{St_B}(\ell_m) \neq x_{per}\sob{St_B}(\ell_m),
\end{equation}
where $$St_B(\ell_m) := H_B(\pmb{\ell}_m) \cup H_B(-\pmb{\ell}_m)$$ is the strip of \(B\) along of \(\ell_m\).
\end{claim}        

Indeed, suppose, by contradiction, that, for any $u \in \bb{Z}^2$, $$(T^u\eta)\sob{B} = x_{per}\sob{B} \implies (T^u\eta)\sob{St_B(\ell_m)} = x_{per}\sob{St_B(\ell_m)}.$$ Since $B$ is a non-empty, finite set, there exists $u \in \bb{Z}^2$ such that $(T^u\eta)\sob{B} = x_{per}\sob{B}$. Changing the alphabet if necessary, let $p \in \bb{N}$ be a prime number such that $\mathcal{A} \subset \bb{Z}_p$ and consider the $\bb{Z}_p$-periodic decomposition $\eta = \overline{\eta}_1+\cdots+\overline{\eta}_{m}$, where $(\overline{\eta}_i)_g := (\eta_i)_g \mod p$ for all $g \in \bb{Z}^2$. Since \[\psi(X) := (X^{h_1}-1) \cdots (X^{h_{m-1}}-1) \in \ann_{\bb{Z}_p}(\eta-\bar{\eta}_m),\] Lemma~\ref{lemma_supportgenerating} states that $\mathcal{S}_{\psi}$ is an $\eta-\overline{\eta}_{m}$-gene\-ra\-ting set. Moreover, as \(h_1, \ldots, h_m \in \bb{Z}^2\) are vectors in pairwise distinct directions, then \(\mathcal{S}_{\psi}\) does not have any edge parallel to \(-\pmb{\ell}_m\) or \(\pmb{\ell}_m\). From $$(T^u\eta)\sob{St_B(\ell_m)} = x_{per}\sob{St_B}(\ell_m)$$ we get that \((T^u\eta)\sob{St_B(\ell_m)}\) and so \((T^{u}(\eta-\overline{\eta}_{m}))\sob{St_B}(\ell_m)\) is periodic with period parallel to \(\pmb{\ell}_m\). Therefore, we conclude by applying the $\eta-\overline{\eta}_{m}$-gene\-ra\-ting set \(\mathcal{S}_{\psi}\) that \(T^{u}(\eta-\overline{\eta}_{m})\) and so \(T^u\eta\) is periodic with period parallel to \(\pmb{\ell}_m\), which contradicts the non-periodicity of \(\eta\) and establishes the claim.\medbreak

According to Claim \ref{claim_coincidebutdiff}, for every \(E(\mathcal{S}_{\varphi})\)-enveloped set $B \subset \bb{Z}^2$ there is $u \in \bb{Z}^2$ such that \((T^u\eta)\sob{B} = x_{per}\sob{B}\), but \[(T^u\eta)\sob{H_B(-\pmb{\ell}_m)} \neq x_{per}\sob{H_B(-\pmb{\ell}_m)} \ \ \text{or} \ \ (T^u\eta)\sob{H_B(\pmb{\ell}_m)} \neq x_{per}\sob{H_B(\pmb{\ell}_m)}.\] We will focus on the case where, for each $E(\mathcal{S}_{\varphi})$-enveloped set $B' \subset \bb{Z}^2$, there exist an $E(\mathcal{S}_{\varphi})$-enveloped set $B \supset B'$, with \((\pmb{\ell}_m)_{B'} = (\pmb{\ell}_m)_{B}\), and $u \in \bb{Z}^2$ such that $$(T^u\eta)\sob{B} = x_{per}\sob{B}, \ \ \text{but} \ \ (T^u\eta)\sob{H_{B}(\pmb{\ell}_m)} \neq x_{per}\sob{H_{B}(\pmb{\ell}_m)}.$$ So due to item (i) of Lemma \ref{tech_lemma}, there exist an \((\pmb{\ell}_m,\pmb{\ell}_J)\)-region \(\hat{A}_{\infty}\), a configuration \(\vartheta \in X_{\eta}\) and \(\epsilon \in \bb{Z}_+\) such that
\begin{equation}\label{eq_existence_tau_lem1_demonst}
	\vartheta\sob{\hat{A}_{\infty}^{(\epsilon)}} = \hat{x}_{per}\sob{\hat{A}_{\infty}^{(\epsilon)}}, \ \ \textrm{but} \ \ \vartheta\sob{\hat{A}_{\infty}^{(\epsilon+1)}} \neq \hat{x}_{per}\sob{\hat{A}_{\infty}^{(\epsilon+1)}},
\end{equation}
where $\hat{x}_{per} := T^{k\vec{v}_{\ell}}x_{per}$ for some \(k \in \bb{Z}_+\). To simplify the notation, we will write \(\pmb{\ell}' = \pmb{\ell}_J\).

\begin{claim}\label{claim_notfullyper}
The set $\{T^{t\vec{v}_{\pmb{\ell}'}}\vartheta : t \in \bb{Z}_+\}$ does not have fully periodic accumulation points. 	
\end{claim}

Indeed, suppose, by contradiction, that $\{T^{t\vec{v}_{\pmb{\ell}'}}\vartheta : t \in \bb{Z}_+\}$ has a fully periodic accumulation point. Then, for each square $Q = [-i,i]^2 \cap \bb{Z}^2$, with $i \in \bb{N}$, we may find an integer $t_0 \in \bb{Z}_+$ such that $(T^{t_0\vec{v}_{\pmb{\ell}'}}\vartheta)\sob{Q} = \vartheta\sob{Q+t_0\vec{v}_{\pmb{\ell}'}}$ is fully periodic and thus periodic with period parallel to \(\pmb{\ell}_m\). Since \(\hat{x}_{per}\) is periodic with period parallel to \(\pmb{\ell}_m\), \(\vartheta\sob{\hat{A}_{\infty}^{(\epsilon)}} = \hat{x}_{per}\sob{\hat{A}_{\infty}^{(\epsilon)}}\) and the set $Q \cap \hat{A}^{(\epsilon)}_{\infty}$ can be taken as large as we want, then \[\vartheta\sob{\hat{A}^{(\epsilon)}_{\infty} \cup (\hat{A}^{(\epsilon+1)}_{\infty} \cap (Q+t_0\vec{v}_{\pmb{\ell}'}))} = \hat{x}_{per}\sob{\hat{A}^{(\epsilon)}_{\infty} \cup (\hat{A}^{(\epsilon+1)}_{\infty} \cap (Q+t_0\vec{v}_{\pmb{\ell}'}))}\] for a square \(Q\) large enough and some \(t_0 \in \bb{N}\). Since $\mathcal{S}_{\varphi}$ is an $\eta$-generating set, we may enlarge the set where \(\vartheta\) and \(\hat{x}_{per}\) coincide so that $\vartheta\sob{\hat{A}^{(\epsilon+1)}_{\infty}} = \hat{x}_{per}\sob{\hat{A}^{(\epsilon+1)}_{\infty}}$,  which contradicts (\ref{eq_existence_tau_lem1_demonst}) and proves the claim.\medbreak

To reach a contradiction and concludes the proof, we will show that $-\pmb{\ell}',\pmb{\ell}' \in \nexpd(\eta)$. Since \(\hat{x}_{per}\) is fully periodic, then \(\vartheta\sob{\hat{A}_{\infty}^{(\epsilon)}} = \hat{x}_{per}\sob{\hat{A}_{\infty}^{(\epsilon)}}\) is periodic with period parallel to $\pmb{\ell}'$. Hence, Pro\-po\-si\-tion~\ref{prop_dec_period_half_plane} implies that any accumulation point $y_{per}$ of $\{T^{t\vec{v}_{\pmb{\ell}'}}\vartheta : t \in \bb{Z}_+\}$ is periodic with period parallel to $\pmb{\ell}'$, but, according to Claim \ref{claim_notfullyper}, not fully periodic. Therefore, from Boyle-Lind Theorem and Proposition~\ref{pps_par_antipar_exp} result that $-\pmb{\ell}',\pmb{\ell}' \in \nexpd(y_{per}) \subset \nexpd(\eta)$, which is a contradiction.

The proof in the case described on item (ii) of Lemma \ref{tech_lemma} is similar to the previous one. \hfill $\Box$

\section{Preliminaries and proof of Theorem \ref{main_thm}}
\label{sec4}

\begin{lemma}\label{lem_subrll'egion}
Let $\eta \in \mathcal{A}^{\bb{Z}^2}$\! be a configuration and suppose there is a line \(\ell \in \nexpl(\eta)\) such that $-\pmb{\ell},\pmb{\ell} \in \nexpd(\eta)$.
\begin{enumerate}[(i)]\setlength{\itemsep}{5pt}
	\item Let $\mathcal{R} \subset \bb{Z}^2$ be an $(\pmb{\ell},\pmb{\ell}')$-region and suppose $\mathcal{S} \subset \bb{Z}^2$ is an $\eta$-generating set, with $\mathcal{S} \backslash \pmb{\ell}'_{\mathcal{S}} \subset \mathcal{R}$, such that \[P_{\eta}(\mathcal{S})-P_{\eta}(\mathcal{S} \backslash \pmb{\ell}'_{\mathcal{S}}) \leq |\mathcal{S} \cap \pmb{\ell}'_{\mathcal{S}}|-1.\] If $x \in X_{\eta}$ is an $(\pmb{\ell}',\mathcal{S},+)$-semi-ambiguous configuration and $x\sob{\mathcal{R}}$ is periodic with period parallel to $\pmb{\ell}$, then there exists an $(\pmb{\ell},\pmb{\ell}')$-region $\mathcal{K} \subset \mathcal{R}$ such that $x\sob{\mathcal{K}}$ is fully periodic with a period parallel to $\pmb{\ell}$ and another one parallel to $\pmb{\ell}'$.
	
	\item Let $\mathcal{R} \subset \bb{Z}^2$ be an $(\pmb{\ell}',\pmb{\ell})$-region and suppose $\mathcal{S} \subset \bb{Z}^2$ is an $\eta$-generating set, with $\mathcal{S} \backslash \pmb{\ell}'_{\mathcal{S}} \subset \mathcal{R}$, such that \[P_{\eta}(\mathcal{S})-P_{\eta}(\mathcal{S} \backslash \pmb{\ell}'_{\mathcal{S}}) \leq |\mathcal{S} \cap \pmb{\ell}'_{\mathcal{S}}|-1.\] If $x \in X_{\eta}$ is an $(\pmb{\ell}',\mathcal{S},-)$-semi-ambiguous configuration and $x\sob{\mathcal{R}}$ is periodic with period parallel to $\pmb{\ell}$, then there exists an $(\pmb{\ell}',\pmb{\ell})$-region $\mathcal{K} \subset \mathcal{R}$ such that $x\sob{\mathcal{K}}$ is fully periodic with a period parallel to $\pmb{\ell}$ and another one parallel to $\pmb{\ell}'$.
\end{enumerate}
\end{lemma}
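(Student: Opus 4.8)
The plan is to run, in the direction $\ell'$, the counting argument behind Proposition~\ref{prop_CyrKra}, using the hypothesis that $x\sob{\mathcal{R}}$ is already $\ell$-periodic as the ``scaffolding'' that a periodic half-plane provides in the original Cyr--Kra setting, and then to intersect the $\ell'$-periodic region so obtained with $\mathcal{R}$. I describe part~(i); part~(ii) is the same argument with every orientation reversed, so that ``$+$'' becomes ``$-$'', $\ell'_{\mathcal{S}}$ and $\lle'_{\mathcal{S}}$ exchange roles, and the $(\ell,\ell')$-region becomes an $(\ell',\ell)$-region. First I would normalize. Since $x$ is $(\ell',\mathcal{S},+)$-semi-ambiguous there is a pattern $\gamma$ with $N_{\mathcal{S}}(\ell',\gamma)>1$, and this is impossible if $\mathcal{S}\cap\ell'_{\mathcal{S}}$ were a single ($\eta$-generated) vertex; hence $\mathcal{S}\cap\ell'_{\mathcal{S}}$ is an edge of $\mathcal{S}$ and $c:=|\mathcal{S}\cap\ell'_{\mathcal{S}}|\ge 2$. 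Replacing $\mathcal{S}$ by the $\eta$-generating set $\mathcal{S}+\tau\vec v_{\ell'}$ (with $\tau$ as in \eqref{semi-ambiguous}; the displayed complexity inequality is translation-invariant) and shrinking $\mathcal{R}$ to a sub-$(\ell,\ell')$-region whose edge parallel to $\ell$ has been pushed past the corner, I may assume: $\tau=0$, i.e.\ $N_{\mathcal{S}}(\ell',\beta_t)>1$ for all $t\ge 0$, where $\beta_t:=(T^{t\vec v_{\ell'}}x)\sob{\mathcal{S}\setminus\ell'_{\mathcal{S}}}$; that $\mathcal{R}$ is the wedge cut out by two lines, one parallel to $\ell$ and one parallel to $\ell'$ (no further edges); that $(\mathcal{S}\setminus\ell'_{\mathcal{S}})+t\vec v_{\ell'}\subset\mathcal{R}$ for every $t\ge 0$; and that $\mathcal{R}+\vec v_{\ell'}\subseteq\mathcal{R}$ (since $\vec v_{\ell'}$ is parallel to one bounding line of the wedge and, because $\vec v_{\ell'}\in\mathcal{H}(\ell)$, points into the half-plane determined by the other).

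Next comes the counting. Since every $\gamma\in\pat(\mathcal{S}\setminus\ell'_{\mathcal{S}},\eta)$ has a nonempty fiber of size $N_{\mathcal{S}}(\ell',\gamma)$ in $\pat(\mathcal{S},\eta)$, one has $P_{\eta}(\mathcal{S})=\sum_{\gamma}N_{\mathcal{S}}(\ell',\gamma)$ and $P_{\eta}(\mathcal{S}\setminus\ell'_{\mathcal{S}})=\sum_\gamma 1$, so the hypothesis $P_{\eta}(\mathcal{S})-P_{\eta}(\mathcal{S}\setminus\ell'_{\mathcal{S}})\le c-1$ reads $\sum_\gamma(N_{\mathcal{S}}(\ell',\gamma)-1)\le c-1$; hence at most $c-1$ patterns are ambiguous and $\{\beta_t:t\ge 0\}$ has at most $c-1$ elements. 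I would then pass to an accumulation point $z\in X_{\eta}$ of $(T^{t\vec v_{\ell'}}x)_{t\ge 0}$. Two things survive in the limit. First, the body of $z$ at any position $s\vec v_{\ell'}$ is a limit of bodies $\beta_{s+t_i}$ which for $i$ large lie in the finite ambiguous set, hence is itself ambiguous; so $z$ is $(\ell',\mathcal{S})$-ambiguous in the two-sided sense of \eqref{ambiguous}. Second, because $\mathcal{R}+t\vec v_{\ell'}\subseteq\mathcal{R}$, each $(T^{t\vec v_{\ell'}}x)\sob{\mathcal{R}}$ is a restriction of the $\ell$-periodic configuration $x\sob{\mathcal{R}}$, so $z\sob{\mathcal{R}}$ is $\ell$-periodic, with the same period $p\vec v_{\ell}$.

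Now the heart of the argument: applying the mechanism of Proposition~\ref{prop_CyrKra} along $\ell'$ to the fully ambiguous $z$. Two-sided $(\ell',\mathcal{S})$-ambiguity together with $\sum_\gamma(N_{\mathcal{S}}(\ell',\gamma)-1)\le c-1$ forces the restriction of $z$ to a half-plane $\mathcal{P}$ bounded by a line parallel to $\ell'$ to be $\ell'$-periodic, say of period $q\vec v_{\ell'}$. When $\ell'\in\nexpd(\eta)$ this is Proposition~\ref{prop_CyrKra} verbatim, after checking the width inequality $|\ell'_{\mathcal{S}}\cap\mathcal{S}|\le|\lle'_{\mathcal{S}}\cap\mathcal{S}|$ or else running it with $\lle'$; when $\ell'\notin\nexpd(\eta)$ one instead propagates the $\ell$-periodicity of $z\sob{\mathcal{R}}$ across the bounding line of $\mathcal{R}$ parallel to $\ell'$, using the single-point $\eta$-generating set supplied by Lemma~\ref{lem_dir_exp}. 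The ``$+$'' side in the semi-ambiguity is exactly what makes $\mathcal{P}$ the half-plane whose interior points away from the corner of $\mathcal{R}$, so that $\mathcal{P}\cap\mathcal{R}$ contains an $(\ell,\ell')$-region on which $z$ is both $\ell$-periodic (inherited from $\mathcal{R}$) and $\ell'$-periodic (inherited from $\mathcal{P}$), i.e.\ doubly periodic. Finally, since $z$ is an accumulation point of $(T^{t\vec v_{\ell'}}x)_{t\ge 0}$, on any finite subset of that region the pattern of $z$ agrees, for infinitely many $t$, with the pattern of $x$ on the corresponding translate inside $\mathcal{R}$; fixing one such $t$ and translating back by $t\vec v_{\ell'}$ yields an $(\ell,\ell')$-region $\mathcal{K}\subseteq\mathcal{R}$ with $x\sob{\mathcal{K}}$ both $\ell$-periodic and $\ell'$-periodic, as required.

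The step I expect to be the main obstacle is this ``heart'' step: converting two-sided ambiguity of $z$ plus the tight complexity inequality into genuine $\ell'$-periodicity on an \emph{unbounded} half-plane $\mathcal{P}$, and then guaranteeing that $\mathcal{P}\cap\mathcal{R}$ is again an $(\ell,\ell')$-region rather than a bounded set. It is worth stressing that $\ell$-periodicity of $z$ on $\mathcal{R}$ alone is far from sufficient to force $\ell'$-periodicity anywhere — a strip periodic in one direction can carry a non-eventually-periodic sequence of rows — so the finiteness of the \emph{ambiguous} bodies (that is, the hypothesis $P_{\eta}(\mathcal{U})\le|\mathcal{U}|$ combined with the one-sidedness ``$+$'') is what rules out this pathology, and arranging the bookkeeping so that all three hypotheses $\mathcal{S}\setminus\ell'_{\mathcal{S}}\subset\mathcal{R}$, $x\sob{\mathcal{R}}$ $\ell$-periodic, and ``$+$'' genuinely cooperate is where the real work lies. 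A secondary subtlety is the descent from $z$ back to $x$: one must be careful that the periodic region recovered for $x$ is still unbounded and of the prescribed $(\ell,\ell')$-type.
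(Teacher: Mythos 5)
Your counting step is sound and matches the paper's inequality $|\pat_{\ell'}(\mathcal{S},\tau,x)|\leq|\mathcal{S}\cap\ell'_{\mathcal{S}}|-1$, and the observation that an accumulation point $z$ of $(T^{t\vec v_{\ell'}}x)_{t\geq 0}$ is two-sidedly $(\ell',\mathcal{S})$-ambiguous is correct. But the two steps you yourself flag as delicate are, as written, genuine gaps, and they are where the whole content of the lemma lives. First, the ``heart'' step: you invoke Proposition~\ref{prop_CyrKra} in the direction $\ell'$, but its hypotheses are not available. The lemma does not assume $\ell'\in\nexpd(\eta)$; the width inequality $|\ell'_{\mathcal{S}}\cap\mathcal{S}|\leq|\lle'_{\mathcal{S}}\cap\mathcal{S}|$ need not hold, and your fallback of ``running it with $\lle'$'' fails because the complexity bound $P_\eta(\mathcal{S})-P_\eta(\mathcal{S}\setminus\lle'_{\mathcal{S}})\leq|\lle'_{\mathcal{S}}\cap\mathcal{S}|-1$ is not among the hypotheses (only the bound for $\ell'_{\mathcal{S}}$ is), and the semi-ambiguity you have is one-sided with respect to $\ell'$, not $\lle'$. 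Your other fallback --- propagating the $\ell$-periodicity of $z\sob{\mathcal{R}}$ across the $\ell'$-parallel edge when $\ell'\notin\nexpd(\eta)$ --- enlarges the $\ell$-periodic region but produces no $\ell'$-period at all. Second, and fatally, the descent from $z$ to $x$: agreement of $z$ with $T^{t\vec v_{\ell'}}x$ on arbitrarily large \emph{finite} sets for infinitely many $t$ gives doubly periodic patterns of $x$ on large finite boxes at positions drifting to infinity; it does not produce a single \emph{unbounded} $(\ell,\ell')$-region $\mathcal{K}\subset\mathcal{R}$ on which $x$ itself is doubly periodic, which is what the statement requires. No diagonal argument repairs this, because the witnessing translates do not nest.

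The paper avoids both problems by never leaving $x$. It uses the hypothesis $\ell,\lle\in\nexpd(\eta)$ (which your proposal never exploits) to guarantee, via Lemma~\ref{lem_genset_noedge_expas}, that $\mathcal{S}$ has edges parallel to both $\ell$ and $\lle$, and hence contains a parallelogram $Q$ whose width in the $\ell'$-direction is $|\mathcal{S}\cap\ell'_{\mathcal{S}}|-1$. The bound on $|\pat_{\ell'}(\mathcal{S},\tau,x)|$ then shows that each scalar sequence $t\mapsto(T^{t\vec v_{\ell'}}x)_u$, $u\in Q\setminus\ell'_Q$, has complexity $P_\xi(p')\leq p'$, so the semi-infinite Morse--Hedlund theorem makes $x$ itself \emph{eventually} $\ell'$-periodic on a half-strip $A_Q\subset\mathcal{R}$ in the direction $\ell'$. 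A second, separate argument is then still needed: using recurrence of the patterns $x\sob{\mathcal{T}+rh'}$ on a large $E(\mathcal{S})$-enveloped set $\mathcal{T}$, together with the $\ell$-periodicity of $x\sob{\mathcal{R}}$ and the generating property of (translates of) $\mathcal{S}$, the paper spreads the $\ell'$-period from the half-strip line by line over an $(\ell,\ell')$-region. Your proposal compresses both of these stages into a single appeal to Proposition~\ref{prop_CyrKra} applied to a limit point, which is exactly the move the hypotheses do not license and from which one cannot return to $x$.
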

\begin{proof}
To fix the ideas, we will focus on the situation presented in Figure \ref{llmaxreg}(A). Since the oriented lines $-\pmb{\ell},\pmb{\ell} \in \nexpd(\eta)$, from Lemma \ref{lem_genset_noedge_expas} we get that $\mathcal{S}$ has an edge parallel to $\pmb{\ell}$ and another one parallel to $-\pmb{\ell}$. We will suppose initially that the edge of \(\mathcal{S}\) parallel to \(\pmb{\ell}\) is less or equal than the edge of \(\mathcal{S}\) parallel to \(-\pmb{\ell}\). Let $g'_0, g'_1 \in \mathcal{S}$ be the initial and the final vertices of $\mathcal{S} \cap \pmb{\ell}'_{\mathcal{S}}$ with respect to the orientation of $\pmb{\ell}'$, respectively. Since $\mathcal{S}$ is convex and by assumption $|\mathcal{S} \cap \pmb{\ell}_{\mathcal{S}}| \leq |\mathcal{S} \cap -\pmb{\ell}_{\mathcal{S}}|$, then the convex set $Q \subset \bb{Z}^2$ whose vertices are \[g'_0,g'_0-(|\mathcal{S} \cap \pmb{\ell}_{\mathcal{S}}|-1)\vec{v}_{\pmb{\ell}},g'_1 \ \ \textrm{and} \ \ g'_1-(|\mathcal{S} \cap \pmb{\ell}_{\mathcal{S}}|-1)\vec{v}_{\pmb{\ell}}\] is contained in $\mathcal{S}$ (see Figure \ref{ll'region_RQwandw'}).
\begin{figure}[ht]
	\centering\includegraphics[width=6.4cm]{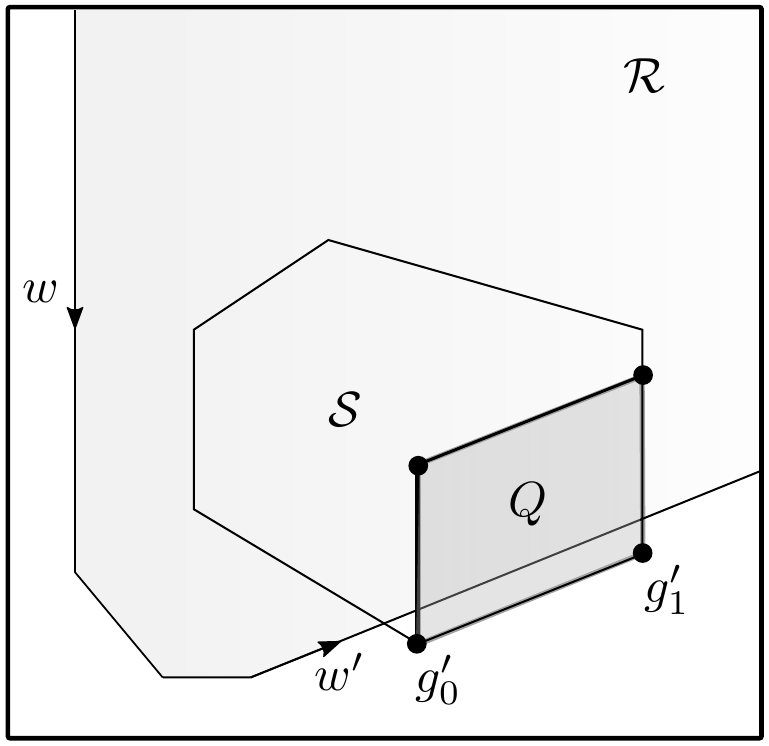}\\
	\caption{The \((\pmb{\ell},\pmb{\ell}')\)-region \(\mathcal{R}\) and the edges \(w \in E(\mathcal{R})\) parallel to \(\pmb{\ell}\) and \(w' \in E(\mathcal{R})\) parallel to \(\pmb{\ell}'\). The sets $\mathcal{S}$ and $Q$ and the points $g'_0$ and $g'_1$.}
	\label{ll'region_RQwandw'}
\end{figure}
Furthermore, for any oriented line $\pmb{\ell}'' \subset \bb{R}^2$ parallel to $\pmb{\ell}'$, with $Q \cap \pmb{\ell}'' \neq \emptyset$, it follows that
\begin{equation}\label{eq_length_edge}
|\mathcal{S} \cap \pmb{\ell}''| \geq |Q \cap \pmb{\ell}''| \geq |Q \cap \pmb{\ell}'_{Q}|-1 = |\mathcal{S} \cap \pmb{\ell}'_{\mathcal{S}}|-1.
\end{equation}

By hypothesis $x$ is an $(\ell',\mathcal{S},+)$-semi-ambiguous configuration. So there is $\tau \in \bb{Z}$, which we may suppose to be positive, satisfying (\ref{semi-ambiguous}).

\begin{claim}
For \(p' := |\mathcal{S} \cap \pmb{\ell}'_{\mathcal{S}}|-1\) and \(H_Q := \left\{g+t\vec{v}_{\pmb{\ell}'} \in \bb{Z}^2 : g \in Q\backslash \pmb{\ell}'_{Q}, \ t \geq \tau+p'\right\}\), $x\sob{H_Q}$ is periodic with period parallel to $\pmb{\ell}'$.
\end{claim}

Indeed, if we define $\pat_{\pmb{\ell}'}\!(\mathcal{S},\tau,x) := \left\{(T^{t\vec{v}_{\pmb{\ell}'}}x)\sob{\mathcal{S} \backslash \pmb{\ell}'_{\mathcal{S}}} : t \geq \tau\right\}$, then \[
\begin{split}
p' \geq P_{\eta}(\mathcal{S})-P_{\eta}(\mathcal{S} \backslash \pmb{\ell}'_{\mathcal{S}}) = \sum_{\gamma \in \pat(\mathcal{S},\eta)} \big(N_{\mathcal{S}}(\pmb{\ell},\gamma)-1\big) & \geq \sum_{\gamma \in \pat_{\pmb{\ell}'}\!(\mathcal{S},\tau,x)} \big(N_{\mathcal{S}}(\pmb{\ell},\gamma)-1\big)\\ & \geq |\pat_{\pmb{\ell}'}\!(\mathcal{S},\tau,x)|.
\end{split}\] 
Given an oriented line $\pmb{\ell}'' \subset \bb{R}^2$ parallel to $\pmb{\ell}'$ satisfying $\pmb{\ell}'' \neq \pmb{\ell}'_Q$ and $Q \cap \pmb{\ell}'' \neq \emptyset$, let\linebreak $u \in Q$ be the initial point of $Q \cap \pmb{\ell}''$ with respect to the orientation of $\pmb{\ell}''$. Since \(Q \subset \mathcal{S}\), from (\ref{eq_length_edge}) we get that \[\mathcal{S} \backslash \pmb{\ell}'_{\mathcal{S}} \supset \left\{u,u+\vec{v}_{\pmb{\ell}'}, \ldots, u+(p'-1)\vec{v}_{\pmb{\ell}'}\right\},\] which yields
\begin{equation}\label{eq_card_period}
p' \geq |\pat_{\pmb{\ell}'}\!(\mathcal{S},\tau,x)| \geq \left|\left\{(T^{t\vec{v}_{\pmb{\ell}'}}x)\sob{\left\{u,u+\vec{v}_{\pmb{\ell}'}, \ldots, u+(p'-1)\vec{v}_{\pmb{\ell}'}\right\}} : t \geq \tau\right\}\right|.
\end{equation}
For $\xi = (\xi_t)_{t \in B} \in \mathcal{A}^{B}$, with \(B = \{\tau,\tau+1,\ldots\}\), defined by $\xi_t = (T^{t\vec{v}_{\pmb{\ell}'}}x)_{u}$ for all \(t \in B\), from (\ref{eq_card_period}) results that $P_{\xi}(p') \leq p'$, or better, that the number of distinct words of length \(p'\) occurring in \(\xi\) is less or equal than \(p'\). Hence, Theorem~\ref{Morse-HedlundThm2} implies that $(\xi_t)_{t \in B+p'}$ is periodic, which means that \(x\sob{\{u+t\vec{v}_{\pmb{\ell}'} : t \geq \tau+p'\}}\) is periodic with period parallel to \(\pmb{\ell}'\). By applying the same reasoning to the others oriented lines \(\pmb{\ell}''\), we obtain that $x\sob{H_Q}$ is periodic with period parallel to $\pmb{\ell}'$, which proves the claim.\medbreak

We remark that \(H_Q\) can be seen as a half strip from \(Q\backslash \pmb{\ell}'_{Q}+(\tau+p')\vec{v}_{\pmb{\ell}'}\) in the direction of \(\pmb{\ell}'\). Furthermore, as \(\tau\) is supposed to be positive, we get that \(H_Q \subset \mathcal{R}\). 

Let $h' \in \bb{Z}^2$ be a period for $x\sob{H_Q}$ parallel to \(\pmb{\ell}'\) and suppose that $\mathcal{T} \subset \mathcal{R}$ is an $E(\mathcal{S})$-enveloped set large enough so that, for any integers $0 \leq r < s$,
\begin{equation}\label{eq_prin_period}
x\sob{\mathcal{T}+r h'} = x\sob{\mathcal{T}+s h'} \ \Longrightarrow \ x\sob{H_{\mathcal{T}}+r h'} = x\sob{H_{\mathcal{T}}+s h'},
\end{equation} 
where \[H_{\mathcal{T}} := \{g-t\vec{v}_{\pmb{\ell}} \in \bb{Z}^2 : g \in \mathcal{T}, \ t \in \bb{Z}_+\}\] is the half-strip from $\mathcal{T}$ in the direction of $-\pmb{\ell}$. Of course this is possible because by assumption $x\sob{\mathcal{R}}$ is periodic with period parallel to $\pmb{\ell}$. Translating $\mathcal{T}$ if necessary, we may suppose that $H_{\mathcal{T}} \cap (H_Q-t\vec{v}_{\pmb{\ell}}) \neq \emptyset$ for all $t \in \bb{N}$ sufficiently large.

To simplify the notation, we will write \(Q' := Q+(\tau+p')\vec{v}_{\pmb{\ell}'}\).

\begin{claim}\label{claim_fullyperiodic}
If $x\sob{\mathcal{T}+r h'} = x\sob{\mathcal{T}+s h'}$ holds for some $0 \leq r < s$, then the restriction of $x$ to the half strip $\mathcal{H}(\pmb{\ell}_{Q'}) \cap \mathcal{R} \cap \mathcal{H}(-\pmb{\ell}_{\mathcal{T}+r h'})$ is periodic of period $(s-r)h'$.
\end{claim}

Indeed, let \(h \in \bb{Z}^2\) be a period for $x\sob{\mathcal{R}}$ parallel to \(\pmb{\ell}\). Since $$x\sob{H_{\mathcal{T}}+rh'} = x\sob{H_{\mathcal{T}}+sh'} = (T^{(s-r)h'}x)\sob{H_{\mathcal{T}}+rh'},$$ then
\begin{equation}\label{eq_ext_period}
x\sob{(H_Q-\iota h) \cup (H_{\mathcal{T}}+r h')} = (T^{(s-r)h'}x)\sob{(H_Q-\iota h) \cup (H_{\mathcal{T}}+r h')} \quad \forall \iota \in \bb{N}.
\end{equation}  
Let $\pmb{\ell}_0 := \pmb{\ell}_{\mathcal{T}+r h'}$ and set $\pmb{\ell}_{i+1} := \pmb{\ell}_{i}^{(-)}$ for all $i \in \bb{Z}_+$. Let $\iota \in \bb{N}$ be large enough so that \((H_Q-i h) \cap (H_{\mathcal{T}}+r h') \neq \emptyset\) for all \(i \geq \iota-|\mathcal{T}|\). Since the convex set \(\pmb{\ell}_1 \cap (H_Q-\iota h)\) has at least \(|\mathcal{S} \cap \pmb{\ell}_{\mathcal{S}}|-1\) elements and $\mathcal{S}$ is an $\eta$-generating set, from (\ref{eq_ext_period}) we can, by induction, enlarge the set where $x$ and $T^{(s-r)h'}x$ coincide by including a subset of $\pmb{\ell}_1 \cap \mathcal{R}$, which can be as large as we want (see Figure \ref{fig8}).
\begin{figure}[ht]
	\centering\includegraphics[width=7.4cm]{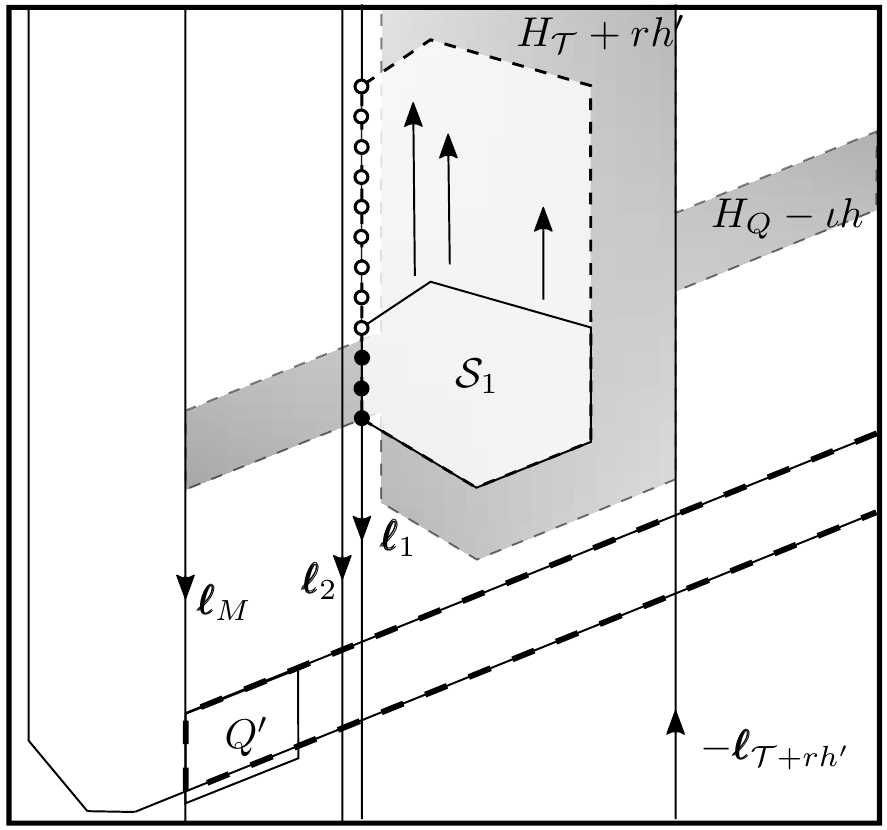}\\
	\caption{The grey region represents the set where the restrictions of $x$ and $T^{(s-r)h'}x$ coincide. The set $\mathcal{S}_1$ denotes the translation of $\mathcal{S}$ where the final points of $\mathcal{S}_1 \cap \pmb{\ell}_1$ and $(H_Q-\iota h) \cap \pmb{\ell}_1$ with respect to the orientation of $\pmb{\ell}_1$ coincide. The white points denote the new set where, after the induction process, the restrictions of $x$ and $T^{(s-r)h'}x$ must coincide.}
	\label{fig8}
\end{figure}
Thus, the periodicity of $x\sob{\pmb{\ell}_1 \cap \mathcal{R}}$ and $(T^{(s-r)h'}x)\sob{\pmb{\ell}_1 \cap \mathcal{R}}$ implies that $x\sob{\pmb{\ell}_1 \cap \mathcal{R}} = (T^{(s-r)h'}x)\sob{\pmb{\ell}_1 \cap \mathcal{R}}$. The same idea applied to the lines $\pmb{\ell}_i$ for $i = 2, \ldots, M$, where $\pmb{\ell}_{M} = \pmb{\ell}_{Q'}$, allows us to conclude that the restriction of $x$ to the half strip $\mathcal{H}(\pmb{\ell}_{Q'}) \cap \mathcal{R} \cap \mathcal{H}(-\pmb{\ell}_{\mathcal{T}+r h'})$ is periodic of period $(s-r)h'$, which proves the claim.\medbreak

Finally, as there exist $0 < t'_0 \leq P_{\eta}(\mathcal{T})+1$ and infinitely many integers $r > 0$ so that (\ref{eq_prin_period}) holds for $r$ and $s = r+t'_0$, Claim \ref{claim_fullyperiodic} implies that $x\sob{\mathcal{H}(\pmb{\ell}_{Q'}) \cap \mathcal{R}}$ is periodic of period $t'_0h'$. 

In the case that $|\mathcal{S} \cap -\pmb{\ell}_{\mathcal{S}}| < |\mathcal{S} \cap \pmb{\ell}_{\mathcal{S}}|$, the vertices of the convex set $Q \subset \bb{Z}^2$ are defined to be \[g'_0,g'_0-(|\mathcal{S} \cap -\pmb{\ell}_{\mathcal{S}}|-1)\vec{v}_{\pmb{\ell}},g'_1 \ \ \textrm{and} \ \ g'_1-(|\mathcal{S} \cap -\pmb{\ell}_{\mathcal{S}}|-1)\vec{v}_{\pmb{\ell}}\] and we consider, instead of the oriented lines $\pmb{\ell}_i$, the oriented lines $-\pmb{\ell}_0 := -\pmb{\ell}_{\mathcal{T}+rh'}$ and $-\pmb{\ell}_{i+1} := -\pmb{\ell}_i^{(-)}$ for all $i \in \bb{Z}_+$. Here, for $0 \leq r < s$ such that $x\sob{\mathcal{T}+r h'} = x\sob{\mathcal{T}+s h'}$, the restriction of $x$ to $\mathcal{H}(\pmb{\ell}_{\mathcal{T}+rh'}) \cap \mathcal{R}$ is periodic of period $(s-r)h'$, which proves (i). 
 
The proof of item (ii) employs similar reasoning. 
\end{proof}

\begin{lemma}\label{lem_no_fullyperiodicaccpoint}
Let $\vartheta \in \mathcal{A}^{\bb{Z}^2}$, with $\mathcal{A} \subset \bb{Z}$, be a non-periodic configuration. Suppose $\vartheta = \vartheta_1 + \cdots + \vartheta_m$ is a $\bb{Z}$-minimal periodic decomposition and $h_i \in \bb{Z}^2$ is a period for $\eta_i$, with \(1 \leq i \leq m\). Then, for each $1 \leq \iota \leq m$, the set $\{T^{th_{\iota}}\vartheta : t \in \bb{Z}\}$ does not have fully periodic accumulation points.
\end{lemma}
\begin{proof}
Given $1 \leq \iota \leq m$, suppose, by con\-tra\-dic\-tion, that $x \in \overline{\{T^{th_{\iota}}\vartheta : t \in \bb{Z}\}}$ is fully periodic and let $(t_n)_{n \in \bb{N}} \subset \bb{Z}$ be a sequence such that $\lim_{n \to +\infty} T^{t_nh_{\iota}}\vartheta = x$. Let $h \in \bb{Z}^2$ be a period for $x$ so that $h$ and $h_{\iota}$ are in distinct direction and consider the Laurent polynomial $\sigma_\iota(X) := \prod_{i \neq \iota} (X^{h_i}-1)$. Note that, for any box $B \subset \bb{Z}^2$ as large as we want, we can find an index $n \in \bb{N}$ such that $(\sigma_{\iota}(X)\vartheta)\sob{B+t_nh_{\iota}}$ is periodic of period $h$. 

We claim that $\sigma_{\iota}(X)\vartheta$ is periodic of period $h$. Indeed, given $g \in \bb{Z}^2$, let $B \subset \bb{Z}^2$ be a box large enough so that, for some $t \in \bb{Z}$, \[g+th_{\iota},g+th_{\iota}+h  \in B,\] and let $n \in \bb{N}$ be such that $(\sigma_{\iota}(X)\vartheta)\sob{B+t_nh_{\iota}}$ is periodic of period $h$. By using that $\sigma_{\iota}(X)\vartheta =$ $\sigma_{\iota}(X)\vartheta_{\iota}$ is periodic of period $h_{\iota}$, it follows that \[(\sigma_{\iota}(X)\vartheta)_g = (\sigma_{\iota}(X)\vartheta)_{g+(t+t_n)h_{\iota}} = (\sigma_{\iota}(X)\vartheta)_{g+(t+t_n)h_{\iota}+h} = (\sigma_{\iota}(X)\vartheta)_{g+h},\] which concludes the claim. 

Set $\varphi(X) := (X^{h_1}-1) \cdots (X^{h_m}-1)$ and $\psi(X) := (X^{h}-1)\sigma_{\iota}(X) \in \ann_{\bb{Z}}(\vartheta)$. Note that \(\mathcal{S}_{\varphi}\) has an edge parallel to \(h_{\iota}\), but $\mathcal{S}_{\psi}$ does not have any edge parallel to $h_{\iota}$, which due to Pro\-po\-si\-tion~\ref{prop_geom_convexset} is a contradiction.
\end{proof}

\subsection{The main steps for the proof of Theorem \ref{main_thm}}
\label{subsec}

The proof of Theorem~\ref{main_thm} will be separated in cases and the main steps are described below: let $\eta \in \mathcal{A}^{\bb{Z}^2}$, with $\mathcal{A} \subset \bb{Z}$, be a non-periodic low convex complexity configuration. Suppose \[\varphi(X) := (X^{h_1}-1) \cdots (X^{h_m}-1) \in \ann_{\bb{Z}}(\eta)\] and let $w_1,\ldots,w_{2m} \in E(\mathcal{S}_{\varphi})$ be an enumeration where \(w_{i+1}\) is a successor edge of \(w_i\) and indices are taken modulo $2m$. Suppose that all non-periodic configurations in $X_{\eta}$ have the same order.

\medbreak\noindent{\it Main lemma of the proof of Theorem \ref{main_thm}.}\hspace{1ex}\ignorespaces
If the antiparallel oriented lines through the origin parallels to \(w_i\) and \(w_{i+m}\) are both one-sided nonexpansive directions on \(X_{\eta}\), then the antiparallel oriented lines through the origin parallels to \(w_{i+m-1}\) and \(w_{i+2m-1}\) are both one-sided nonexpansive directions on \(X_{\eta}\) (See Figure \ref{figmainidea}(A) and Figure~\ref{figmainidea}(B)).\medbreak

To conclude the proof of Theorem \ref{main_thm}, we just need to apply the main lemma successively, starting from a line $\ell \in \nexpl(\eta)$ such that $-\pmb{\ell},\pmb{\ell} \in \nexpd(\eta)$ (see Figu\-re~\ref{figmainidea}). Of course, for a not fully periodic configuration with a non-trivial annihilator such a line always exists (see Theorem \ref{secondary_thm}).

\begin{figure}[!htbp]
	\centering
	\begin{minipage}[b]{0.49\linewidth}
		\centering\includegraphics[width=5cm]{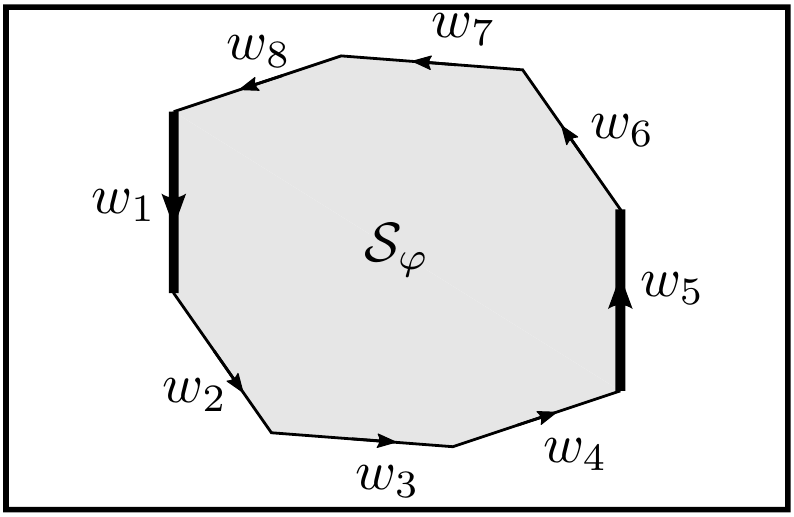}\\
		\small{(A) Suppose that $\pmb{\ell}_1,\pmb{\ell}_5, \in \nexpd(\eta)$.}
	\end{minipage} 
	\hfill
	\begin{minipage}[b]{0.49\linewidth}
		\centering\includegraphics[width=5cm]{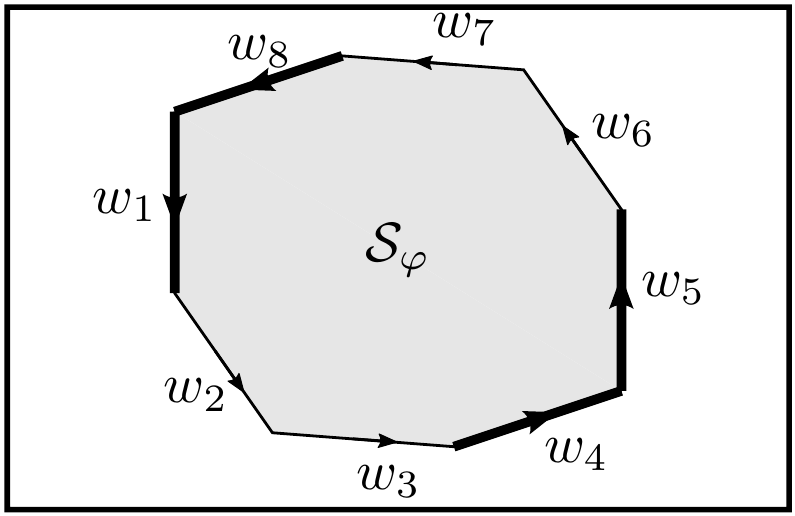}\\
		\small{(B) From (A) we get $\pmb{\ell}_4,\pmb{\ell}_8 \in \nexpd(\eta)$.}
	\end{minipage}
	\hfill
	\vspace{0.4cm}
	\begin{minipage}[b]{0.49\linewidth}
		\centering\includegraphics[width=5cm]{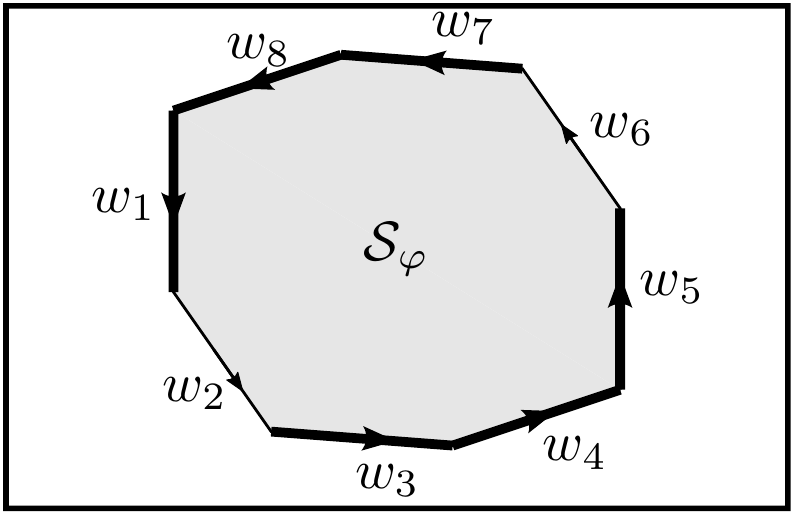}\\
		\small{(C) From (B) we get $\pmb{\ell}_3,\pmb{\ell}_7 \in \nexpd(\eta)$.}
	\end{minipage}
	\hfill
	\begin{minipage}[b]{0.49\linewidth}
		\centering\includegraphics[width=5cm]{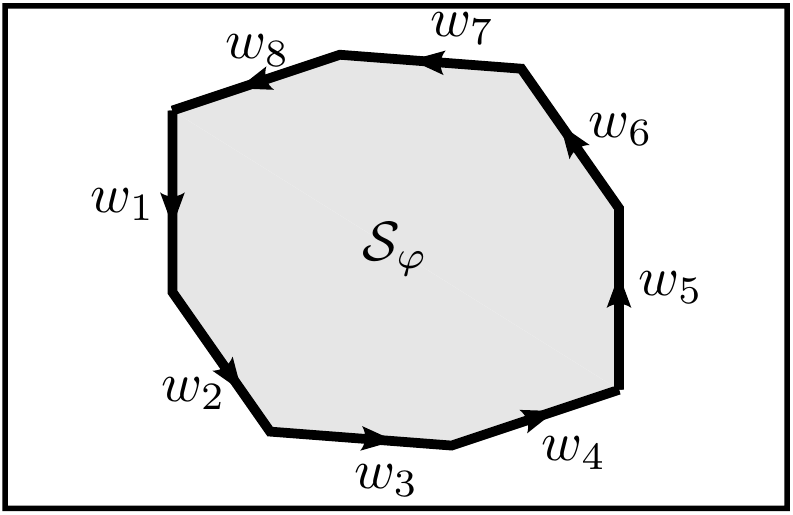}\\
		\small{(D) From (C) we get $\pmb{\ell}_2,\pmb{\ell}_6 \in \nexpd(\eta)$.}
	\end{minipage}
	\caption{Let $\pmb{\ell}_1,\ldots,\pmb{\ell}_{8} \subset \bb{R}^2$ be an enumeration of the oriented lines through the origin where \(\pmb{\ell}_i\) is parallel to \(w_i\) for each \(1 \leq i \leq 8\). We apply the main lemma stated above successively to show that $\pmb{\ell}_1,\ldots,\pmb{\ell}_{8} \in \nexpd(\eta)$. Note that \(\pmb{\ell}_5 = -\pmb{\ell}_1\), \(\pmb{\ell}_8 = -\pmb{\ell}_4\), \(\pmb{\ell}_7 = -\pmb{\ell}_3\) and \(\pmb{\ell}_6 = -\pmb{\ell}_2\).}
	\label{figmainidea}
\end{figure}

Let $\pmb{\ell}_1,\ldots,\pmb{\ell}_{2m} \subset \bb{R}^2$ be an enumeration of the oriented lines through the origin where \(w_i\) is parallel to \(\pmb{\ell}_i\), with \(1 \leq i \leq 2m\). Suppose that \(-\pmb{\ell}_{\iota},\pmb{\ell}_{\iota} \in \nexpd(\eta)\) and let \(x_{per} \in X_{\eta}\) be a periodic configuration with period parallel to \(\pmb{\ell}_{\iota}\). To prove the main lemma, we need to con\-si\-der two cases:

{\it Case 1.} For some \(E(\mathcal{S}_{\varphi})\)-enveloped set \(B \subset \bb{Z}^2\) there exists $u \in \bb{Z}^2$ such that $(T^u\eta)\sob{H_B(\pmb{\ell}_{\iota})} = x_{per}\sob{H_B(\pmb{\ell}_{\iota})}$.

\medbreak

{\it Case 2.} For any $E(\mathcal{S}_{\varphi})$-enveloped set $B \subset \bb{Z}^2$ and all $u \in \bb{Z}^2$ such that $(T^u\eta)\sob{B} = x_{per}\sob{B}$, one has $(T^u\eta)\sob{H_B(\pmb{\ell}_{\iota})} \neq x_{per}\sob{H_B(\pmb{\ell}_{\iota})}$.

\medbreak

In Case 1, we use Lemma \ref{lem_subrll'egion} to construct an \((-\pmb{\ell}_{\iota}, \pmb{\ell}')\)-region whose restriction of \(T^u\eta\) is fully periodic. A certain kind of maximality implies that \(-\pmb{\ell}',\pmb{\ell}' \in \nexpd(\eta)\). To finish, we apply Lemma \ref{lem_no_fullyperiodicaccpoint} to show that the boundary of such an \((-\pmb{\ell}_{\iota}, \pmb{\ell}')\)-region has a rigid structure, what implies that \(\pmb{\ell}'\) is parallel to \(w_{\iota+2m-1} = w_{\iota-1}\). In Case~2 the strategy is similar, but we use Lemma \ref{tech_lemma} to get an \((\pmb{\ell}_{\iota}, \pmb{\ell}')\)-region whose restriction of some non-periodic configuration is fully periodic. Here, the oriented line \(\pmb{\ell}'\) will be parallel to \(w_{\iota+m-1}\).  

\subsection{Proof of Theorem \ref{main_thm}}

If \(\eta\) is periodic, the result follows from Proposition~\ref{pps_par_antipar_exp}. So we may assume \(\eta\) is non-periodic. Let \(\eta = \eta_1+\cdots+\eta_m\) be a \(\bb{Z}\)-minimal periodic decomposition (see Theorem \ref{theorKS}), \(h_i \in \bb{Z}^2\) a period for \(\eta_i\), with \(1 \leq i \leq m\), and set $\varphi(X) := (X^{h_1}-1) \cdots (X^{h_m}-1)$. Suppose $\pmb{\ell}_1,\ldots,\pmb{\ell}_{2m} \subset \bb{R}^2$ is an enumeration of the oriented lines through the origin parallels to the edges of $\mathcal{S}_{\varphi}$ where the edge parallel to $\pmb{\ell}_{i+1}$ is a successor of the edge parallel to $\pmb{\ell}_{i}$ and indices are taken modulo $2m$. Renaming the vectors \(h_1, \ldots, h_m\) if necessary, we may assume that each \(h_i\), with \(1 \leq i \leq m\), is either parallel or antiparallel to \(\pmb{\ell}_i\).

The next lemma is the heart of the proof of Theorem \ref{main_thm} and its proof will be separated in two cases:

\begin{lemma}\label{mainlemmamainthm}
If $-\pmb{\ell}_{\iota},\pmb{\ell}_{\iota} \in \nexpd(\eta)$ for some \(1 \leq \iota \leq 2m\), then $-\pmb{\ell}_{\iota+m-1},\pmb{\ell}_{\iota+m-1} \in \nexpd(\eta)$.
\end{lemma}
\begin{proof} 
To simplify the nota\-tion, we will write $-\pmb{\ell}_{\iota} = -\pmb{\ell}$ and $\pmb{\ell}_{\iota} = \pmb{\ell}$. Let $\mathcal{S} \subset \bb{Z}^2$, with $\mathcal{S} \cap \pmb{\ell}_{\mathcal{S}}$ contained in $\pmb{\ell}^{(-)}$, be an $\eta$-generating set as in Lemma~\ref{lemma_generatingset}, which according to Lemma~\ref{lem_genset_noedge_expas}  has an edge parallel to $\pmb{\ell}$ and another one parallel to $-\pmb{\ell}$. We will suppose initially that \(|\mathcal{S} \cap \pmb{\ell}_{\mathcal{S}}| \leq |\mathcal{S} \cap -\pmb{\ell}_{\mathcal{S}}|\). Since \(\pmb{\ell} \in \nexpd(\eta)\), then there exist configurations $x_{per},y_{per} \in X_{\eta}$ such that $x_{per}\sob{\mathcal{H}(\pmb{\ell})} = y_{per}\sob{\mathcal{H}(\pmb{\ell})}$, but \((x_{per})_g \neq (y_{per})_g\) for some $g \in \pmb{\ell}^{(-)} \cap \bb{Z}^2$. Note that $x_{per}$ and $y_{per}$ are $(\pmb{\ell},\mathcal{S})$-am\-bi\-gu\-ous configurations. Thus, Propositions \ref{prop_CyrKra} and \ref{prop_dec_period_half_plane} imply that $x_{per}$ and $y_{per}$ are periodic with periods parallels to \(\pmb{\ell}\).

\medbreak
\subsection*{Case 1} Let \(B \subset \bb{Z}^2\) be an \(E(\mathcal{S}_{\varphi})\)-enveloped set and suppose there exists $u \in \bb{Z}^2$ such that $(T^u\eta)\sob{H_B(\pmb{\ell})} = x_{per}\sob{H_B(\pmb{\ell})}$.
\medbreak

\begin{claim}
$(T^u\eta)\sob{\mathcal{R}_{\iota-1}}$ is periodic with period parallel to $\pmb{\ell}$, where \[\mathcal{R}_{\iota-1} := \{g+t\vec{v}_{\pmb{\ell}_{\iota-1}} \in \bb{Z}^2 : g \in H_B(\pmb{\ell}), \ t \in \bb{Z}_+\}\] is an $(-\pmb{\ell},\pmb{\ell}_{\iota-1})$-region.
\end{claim} 

Indeed, changing the al\-pha\-bet if necessary, let $p \in \bb{N}$ be a prime number such that $\mathcal{A} \subset \bb{Z}_p$ and consider the $\bb{Z}_p$-periodic decomposition $\eta = \overline{\eta}_1+\cdots+\overline{\eta}_m$, where $(\overline{\eta}_i)_g := (\eta_i)_g \mod p$ for all $g \in \bb{Z}^2$. Let \(1 \leq \iota_0 \leq m\) be such that \(\iota_0 = \iota \mod m\). Note that \(\bar{\eta}_{\iota_0}\) and then $(T^u(\eta-\overline{\eta}_{\iota_0}))\sob{H_B(\pmb{\ell})}$ is periodic with period parallel to \(\pmb{\ell}\). Since \[\varphi_{\iota}(X) := \prod_{i \neq \iota_0} (X^{h_i}-1) \in \ann_{\bb{Z}_p}(\eta-\overline{\eta}_{\iota_0}),\] Lem\-ma~\ref{lemma_supportgenerating} states that $\mathcal{S}_{\varphi_{\iota}}$ is an $\eta-\overline{\eta}_{\iota_0}$-ge\-nerating set. Being $h_1,\ldots,h_m \in \bb{Z}^2$ vectors in pairwise distinct directions, Lemma \ref{lem_periods_annihilator} implies that $\mathcal{S}_{\varphi_{\iota}}$ does not have any edge parallel to $-\pmb{\ell}$ or $\pmb{\ell}$. So we may extend the periodicity from $(T^u(\eta-\overline{\eta}_{\iota_0}))\sob{H_B(\pmb{\ell})}$ to $$(T^u(\eta-\overline{\eta}_{\iota_0}))\sob{H_B(\pmb{\ell}) \cup A_1},$$ where $A_1 := \mathcal{R}_{\iota-1} \cap \pmb{l}_1$ and \(\pmb{l}_1 := \pmb{\ell}^{(-)}_{B}\) (see Figure~\ref{fig10}).     
\begin{figure}[ht]
	\centering\includegraphics[width=7.4cm]{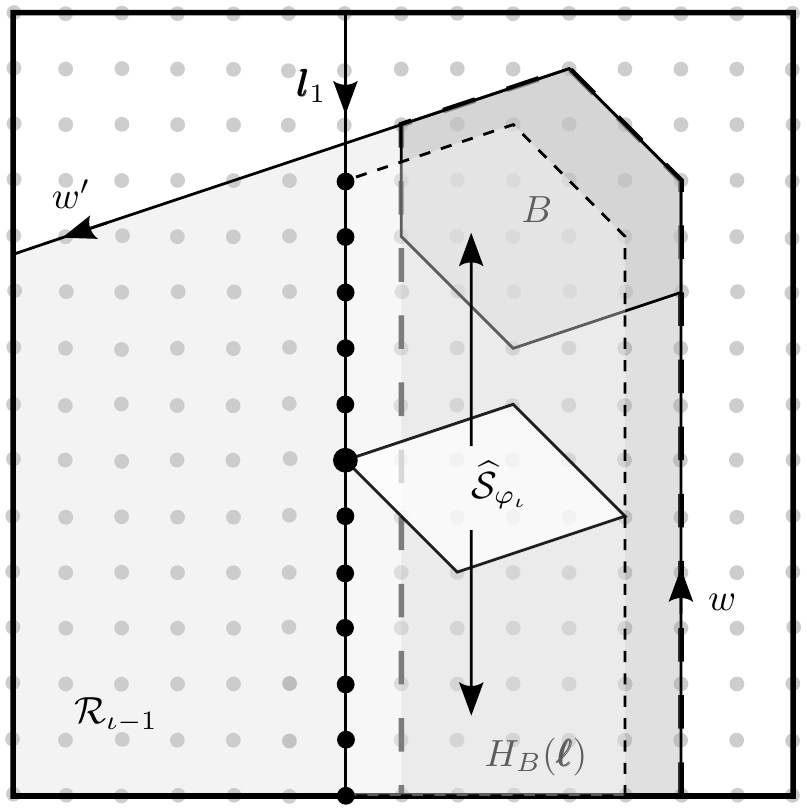}\\
	\caption{The set \(B\), the half strip \(H_B(\pmb{\ell})\), the \((-\pmb{\ell},\pmb{\ell}_{\iota-1})\)-re\-gion \(\mathcal{R}_{\iota-1}\) and the edges \(w \in E(\mathcal{R}_{\iota-1})\) parallel to \(-\pmb{\ell}\) and \(w' \in E(\mathcal{R}_{\iota-1})\) parallel to \(\pmb{\ell}_{\iota-1}\). The set $\widehat{\mathcal{S}}_{\varphi_{\iota}}$ denotes a trans\-lation of $\mathcal{S}_{\varphi_{\iota}}$ and the black points represent the set \(A_1\). Since $\widehat{\mathcal{S}}_{\varphi_{\iota}}$ is an $\eta-\overline{\eta}_{\iota_0}$-ge\-ne\-ra\-ting set, then the knowledge of \(T^u\eta\) on \(H_B(\pmb{\ell})\) determines uniquely \(T^u\eta\) on \(A_1\).}
	\label{fig10}
\end{figure}
As before, we may extend the periodicity from $(T^u(\eta-\overline{\eta}_{\iota_0}))\sob{H_B(\pmb{\ell}) \cup A_1}$ to $$(T^u(\eta-\overline{\eta}_{\iota_0}))\sob{H_B(\pmb{\ell}) \cup A_1 \cup A_2},$$ where $A_2 := \mathcal{R}_{\iota-1} \cap \pmb{l}_2$ and \(\pmb{l}_2 := \pmb{l}^{(-)}_{1}\). Proceeding this way, we get by induction that $(T^u(\eta-\overline{\eta}_{\iota_0}))\sob{\mathcal{R}_{\iota-1}}$ and therefore $(T^u\eta)\sob{\mathcal{R}_{\iota-1}}$ is periodic with period parallel to \(\pmb{\ell}\), which establishes the claim.\medbreak

Let $h \in \bb{Z}^2$ be a period for $(T^u\eta)\sob{\mathcal{R}_{\iota-1}}$ parallel to \(\pmb{\ell}\). For each integer $\iota-m+1 \leq i \leq \iota-2$, we define the $(-\pmb{\ell},\pmb{\ell}_i)$-region \[\mathcal{R}_{i} := \{g+t\vec{v}_{\pmb{\ell}_{i}} \in \bb{Z}^2 : g \in \mathcal{R}_{i+1}, \ t \in \bb{Z}_+\}.\] Note that $(T^u\eta)\sob{\mathcal{H}(\pmb{\ell}_{\iota-m})}$ does not have a period parallel to $\pmb{\ell}_{\iota-m}$ or \(\pmb{\ell} = \pmb{\ell}_{\iota}\), since otherwise Proposition~\ref{prop_dec_period_half_plane} would implies that $T^u\eta$ is periodic. Hence, as \[\mathcal{R}_{\iota-1} \subset \mathcal{R}_{\iota-2} \subset \cdots \subset \mathcal{R}_{\iota-m+1},\] we may consider the smallest integer $\iota-m+1 \leq I \leq \iota-1$ such that $(T^u\eta)\sob{\mathcal{R}_I}$ is periodic of period $h$. 

Let $0 = d_0 < d_1 < \cdots < d_n < \cdots$ denote the sequence where, for each $g \in \mathcal{H}(\pmb{\ell}_I)$, there exists $i \in \bb{Z}_+$ such that $\dist(g,\pmb{\ell}_I) = d_i$. To simplify the notation, we will write $\pmb{\ell}_I = \pmb{\ell}'$. Given $n \in \bb{Z}_+$, we define the $(-\pmb{\ell},\pmb{\ell}')$-region \[\mathcal{R}^{n}_{I} := \{g+t\vec{v}_{\pmb{\ell}_{I-1}} \in \bb{Z}^2 : g \in \mathcal{R}_{I}, \ t \in \bb{Z}_+, \ \dist(g+t\vec{v}_{\pmb{\ell}_{I-1}},\pmb{\ell}'_{\mathcal{R}_I}) \leq d_n\}.\] From the fact that $\bigcup_{n = 0}^{\infty} \mathcal{R}^{n}_{I} = \mathcal{R}_{I-1}$, we may consider the greatest integer $N \in \bb{Z}_+$ such that $(T^u\eta)\sob{\mathcal{R}^{N}_{I}}$ is periodic of period $h$.

\begin{claim}\label{semiambiguous_case1}
\(T^u\eta\) is an \((\pmb{\ell}',\mathcal{T},+)\)-semi-ambiguous configuration for any translation $\mathcal{T}$ of $\mathcal{S}$ where $\mathcal{T} \backslash \pmb{\ell}'_{\mathcal{T}} \subset \mathcal{R}^{N}_{I}$, but $\mathcal{T} \not\subset \mathcal{R}^N_{I}$.
\end{claim}

Indeed, suppose, by contradiction, that there exists an integer $t_0 \in \bb{Z}_+$ such that $N_{\mathcal{T}}(\pmb{\ell}',\gamma) = 1$, where $\gamma =  (T^{t_0\vec{v}_{\pmb{\ell}'}}(T^u\eta))\sob{\mathcal{T} \backslash \pmb{\ell}'_{\mathcal{T}}}$. Being $(T^u\eta)\sob{\mathcal{R}^{N}_{I}}$ periodic of period $h$, we have that \[(T^{t_0\vec{v}_{\pmb{\ell}'}}(T^u\eta))\sob{\mathcal{T} \backslash \pmb{\ell}'_{\mathcal{T}}} = \gamma =  (T^{t_0\vec{v}_{\pmb{\ell}'}+h}(T^u\eta))\sob{\mathcal{T} \backslash \pmb{\ell}'_{\mathcal{T}}}.\] Since $N_{\mathcal{T}}(\pmb{\ell}',\gamma) = 1$, then $$(T^{t_0\vec{v}_{\pmb{\ell}'}}(T^u\eta))\sob{\mathcal{T}} = (T^{t_0\vec{v}_{\pmb{\ell}'}+h}(T^u\eta))\sob{\mathcal{T}}$$ and so
\begin{equation}\label{eq_ext_edge_uni_set}
(T^u\eta)\sob{\mathcal{R}^{N}_I \cup (\mathcal{T}+t_0\vec{v}_{\pmb{\ell}'})} = (T^{h}(T^u\eta))\sob{\mathcal{R}^N_I \cup (\mathcal{T}+t_0\vec{v}_{\pmb{\ell}'})}.
\end{equation}
Since $\mathcal{T}$ is $\eta$-generating, (\ref{eq_ext_edge_uni_set}) holds for all $t_0 \in \bb{Z}_+$ if $\mathcal{T}$ does not have an edge par\-al\-lel to $\pmb{\ell}'$, which means that \[(T^u\eta)\sob{\mathcal{R}^N_I \cup \{g+t\vec{v}_{\pmb{\ell}'} : t \in \bb{Z}_+\}} = (T^{h}(T^u\eta))\sob{\mathcal{R}^N_I \cup \{g+t\vec{v}_{\pmb{\ell}'} : t \in \bb{Z}_+\}},\] where $g \in \mathcal{T} \cap \pmb{\ell}'_{\mathcal{T}}$. Otherwise, (\ref{eq_ext_edge_uni_set}) and the fact that $\mathcal{T}$ is $\eta$-generating allow us to obtain by induction that \[(T^u\eta)\sob{\mathcal{R}^N_I \cup \{g+t\vec{v}_{\pmb{\ell}'} : t \geq t_0\}} = (T^{h}(T^u\eta))\sob{\mathcal{R}^N_I \cup \{g+t\vec{v}_{\pmb{\ell}'} : t \geq t_0\}}\] (see Figure \ref{fig11}(A)). In any case, by using that $\mathcal{S}_{\varphi}$ is $\eta$-generating, we get that $$(T^u\eta)\sob{\mathcal{R}^{N+1}_I} = (T^h(T^u\eta))\sob{\mathcal{R}^{N+1}_I}$$ (see Figure \ref{fig11}(B)), which con\-tra\-dicts the maximality of $N \in \bb{Z}_+$ and proves the claim.\medbreak

\begin{figure}[!htbp]
	\begin{minipage}[t]{0.47\linewidth}
		\begin{center}
			\includegraphics[width=5.4cm]{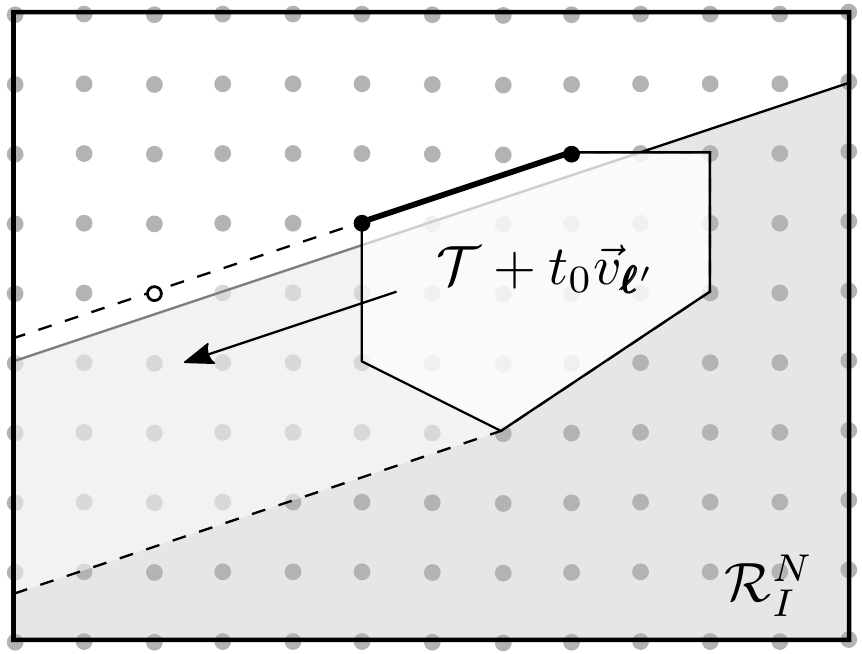}\\
		\end{center}
		\small{(A) Since $\mathcal{T}$ is $\eta$-generating, the knowledge of a configuration on \(\mathcal{R}^N_I \cup (\mathcal{T}+t_0\vec{v}_{\pmb{\ell}'})\) determines unique\-ly such a configuration on \(\{g+t\vec{v}_{\pmb{\ell}'} : t \geq t_0\}\).}
	\end{minipage} 
	\hfill
	\begin{minipage}[t]{0.47\linewidth}
		\begin{center}
			\includegraphics[width=5.4cm]{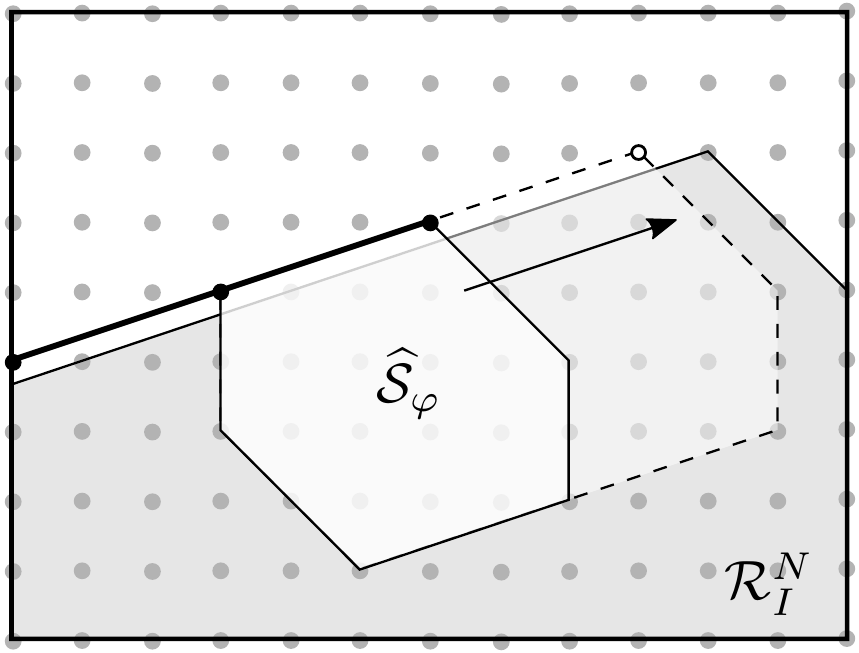}\\
		\end{center}
		\small{(B) Let \(\widehat{\mathcal{S}}_{\varphi}\) denote a translation of \(\mathcal{S}_{\varphi}\). Since $\widehat{\mathcal{S}}_{\varphi}$ is $\eta$-generating, the knowledge of a configuration on $\mathcal{R}^N_I \cup \{g+t\vec{v}_{\pmb{\ell}'} : t \geq t_0\}$ determines uniquely such a configuration on $\mathcal{R}^{N+1}_I$.}
	\end{minipage}
	\caption{The main steps to contradict the maximality of \(N\).}
	\label{fig11}
\end{figure}

Due to Claim~\ref{semiambiguous_case1} and Lemma~\ref{lem_subrll'egion}, there exists an $(-\pmb{\ell},\pmb{\ell}')$-region $\mathcal{K} \subset \mathcal{R}^N_I$ such that $(T^u\eta)\sob{\mathcal{K}}$ is fully periodic with a period parallel to \(\pmb{\ell}\) and another one parallel to $\pmb{\ell}'$.

\begin{claim}\label{notfullyperiodic_case1}
The set $\{T^{t\vec{v}_{\pmb{\ell}'}}(T^u\eta) : t \in \bb{Z}_+\}$ does not have fully periodic accumulation points.
\end{claim}

Indeed, since \(T^u\eta\) is a non-periodic configuration, \(T^u\eta = T^u\eta_1+\cdots+T^u\eta_m\) is a \(\bb{Z}\)-minimal periodic decomposition and \(h_i\) is a period for \(T^u\eta_i\), with \(1 \leq i \leq m\), Lemma~\ref{lem_no_fullyperiodicaccpoint} prevents $\{T^{t\vec{v}_{\pmb{\ell}'}}(T^u\eta) : t \in \bb{Z}_+\}$ from having fully periodic accumulation points, which proves the claim.\medbreak   

\begin{claim}\label{nonexforbothorient_case1}
We have $-\pmb{\ell}',\pmb{\ell}' \in \nexpd(\eta)$.
\end{claim}

Indeed, let $z'_{per}$ be an accumulation point of $\{T^{t\vec{v}_{\pmb{\ell}'}}(T^u\eta) : t \in \bb{Z}_+\}$. As $(T^u\eta)\sob{\mathcal{K}}$ is periodic with period parallel to $\pmb{\ell}'$, then \(z'_{per}\sob{\mathcal{H}(\pmb{\ell}'_{\mathcal{K}})}\) is periodic with period parallel to $\pmb{\ell}'$. Thus, Pro\-po\-si\-tion~\ref{prop_dec_period_half_plane} implies that $z'_{per}$ is periodic with period parallel to $\pmb{\ell}'$, but, according to Claim~\ref{notfullyperiodic_case1}, not fully periodic. Therefore, from Boyle-Lind Theorem and Proposition~\ref{pps_par_antipar_exp} result that $-\pmb{\ell}',\pmb{\ell}' \in \nexpd(z'_{per}) \subset \nexpd(\eta)$, which proves the claim.\medbreak

\begin{claim}\label{Iminimal}
We have $I = \iota-1$.
\end{claim}
Indeed, suppose, by contradiction, that $I < \iota-1$. Since $(T^u\eta)\sob{\mathcal{K}}$ is fully periodic and \(\vec{v}_{\pmb{\ell}_{\iota-1}}\) points inward to the $(-\pmb{\ell},\pmb{\ell}_I)$-region \(\mathcal{K}\), the set $\{T^{t\vec{v}_{\pmb{\ell}_{\iota-1}}}(T^u\eta) : t \in \bb{Z}_+\}$ has a fully periodic accumulation point, which due to Lemma~\ref{lem_no_fullyperiodicaccpoint} is a contradiction and establishes the claim.\medbreak

Finally, from Claims \ref{nonexforbothorient_case1} and \ref{Iminimal} we get that \(-\pmb{\ell}_{\iota-1},\pmb{\ell}_{\iota-1} \in \nexpd(\eta),\) but as $\pmb{\ell}_{\iota-1}$ is antiparallel to $\pmb{\ell}_{\iota+m-1}$, we have \[-\pmb{\ell}_{\iota+m-1},\pmb{\ell}_{\iota+m-1} \in \nexpd(\eta),\] which establishes Lemma \ref{mainlemmamainthm} for Case 1.

\medbreak
\subsection*{Case 2} Suppose that, for any $E(\mathcal{S}_{\varphi})$-enveloped set $B \subset \bb{Z}^2$ and all $u \in \bb{Z}^2$ such that $(T^u\eta)\sob{B} = x_{per}\sob{B}$, one has $(T^u\eta)\sob{H_B(\pmb{\ell})} \neq x_{per}\sob{H_B(\pmb{\ell})}$.
\medbreak

Initially, note that $(x_{per})_g \neq (y_{per})_g$ for some $g \in \pmb{\ell}^{(-)} \cap \bb{Z}^2$ prevents both $x_{per}\sob{\mathcal{H}(\pmb{\ell}^{(-)})}$ and $y_{per}\sob{\mathcal{H}(\pmb{\ell}^{(-)})}$ from being fully periodic. Therefore, we may assume that $x_{per}\sob{\mathcal{H}(\pmb{\ell}^{(-)})}$ is not fully periodic. Due to item (i) of Lemma \ref{tech_lemma}, there exist an \((\pmb{\ell},\pmb{\ell}_J)\)-region \(\hat{A}_{\infty}\), with \(\iota+1 \leq J \leq \iota+m-1\), a configuration \(\vartheta \in X_{\eta}\) and \(\epsilon \in \bb{Z}_+\) such that
\begin{equation}\label{eq_existence_tau_case2_demonst}
	\vartheta\sob{\hat{A}_{\infty}^{(\epsilon)}} = \hat{x}_{per}\sob{\hat{A}_{\infty}^{(\epsilon)}}, \ \ \textrm{but} \ \ \vartheta\sob{\hat{A}_{\infty}^{(\epsilon+1)}} \neq \hat{x}_{per}\sob{\hat{A}_{\infty}^{(\epsilon+1)}},
\end{equation}
where $\hat{x}_{per} := T^{k\vec{v}_{\ell}}x_{per}$ for some \(k \in \bb{Z}_+\). To simplify the notation, we will write \(\pmb{\ell}' = \pmb{\ell}_J\).

\begin{claim}\label{semiambiguous_case2}
\(\vartheta\) is an \((\pmb{\ell}',\mathcal{T},+)\)-semi-ambiguous configuration for any translation $\mathcal{T}$ of $\mathcal{S}$ where $\mathcal{T} \backslash \pmb{\ell}'_{\mathcal{T}} \subset \hat{A}_{\infty}^{(\epsilon)}$, but $\mathcal{T} \not\subset \hat{A}_{\infty}^{(\epsilon)}$.
\end{claim}

Indeed, let \(h \in \bb{Z}^2\) denote a period for \(\vartheta\sob{\hat{A}_{\infty}^{(\epsilon)}}\) parallel to \(-\pmb{\ell}\). Since, due to (\ref{eq_existence_tau_case2_demonst}), \(\vartheta\sob{\hat{A}_{\infty}^{(\epsilon+1)}}\) is not periodic of period \(h\), the proof is identical to that one of Claim~\ref{semiambiguous_case1}.\medbreak

Due to Claim~\ref{semiambiguous_case2} and Lemma~\ref{lem_subrll'egion}, there exists an $(\pmb{\ell},\pmb{\ell}')$-region $\mathcal{K} \subset \hat{A}_{\infty}^{(\epsilon)}$ such that $\vartheta\sob{\mathcal{K}}$ is fully periodic with a period parallel to \(\pmb{\ell}\) and another one parallel to $\pmb{\ell}'$.

\begin{claim}\label{notfullyperiodic_case2}
The set $\{T^{t\vec{v}_{\pmb{\ell}'}}\vartheta : t \in \bb{Z}_+\}$ does not have fully periodic accumulation points.
\end{claim}

Indeed, the proof is identical to that one of Claim \ref{claim_notfullyper} with \(\pmb{\ell}\) instead of \(\pmb{\ell}_m\).\medbreak

\begin{claim}\label{nonexforbothorient_case2}
We have $-\pmb{\ell}',\pmb{\ell}' \in \nexpd(\vartheta)$.
\end{claim}

Indeed, let \(z'_{per}\) be an accumulation point of $\{T^{t\vec{v}_{\pmb{\ell}'}}\vartheta : t \in \bb{Z}_+\}$. Since \(\vartheta\sob{\mathcal{K}}\) is periodic with period parallel to \(\pmb{\ell}'\), then \(z'_{per}\sob{\mathcal{H}(\pmb{\ell}'_{\mathcal{K}})}\) is periodic with period parallel to \(\pmb{\ell}'\). Thus, Proposition \ref{prop_dec_period_half_plane} implies that $z'_{per}$ is periodic with period parallel to \(\pmb{\ell}'\), but, according to Claim \ref{notfullyperiodic_case2}, not fully periodic. Therefore, from Boyle-Lind Theorem and Proposition~\ref{pps_par_antipar_exp} result that $-\pmb{\ell}',\pmb{\ell}' \in \nexpd(z'_{per}) \subset \nexpd(\vartheta)$, which proves the claim.\medbreak

\begin{claim}\label{nonperiodic_case2}
The configuration \(\vartheta\) is non-periodic.
\end{claim}

Indeed, as $\vartheta\sob{\hat{A}^{(\epsilon)}_{\infty}} = \hat{x}_{per}\sob{\hat{A}^{(\epsilon)}_{\infty}}$ is periodic with period parallel to $\pmb{\ell}$ and the support line of \(\hat{A}^{(\epsilon)}_{\infty}\) determined by \(\pmb{\ell}\) coincides with \(\pmb{\ell}^{(-)}\), the fact that $x_{per}\sob{\mathcal{H}(\pmb{\ell}^{(-)})}$ and so $\hat{x}_{per}\sob{\mathcal{H}(\pmb{\ell}^{(-)})}$ is not fully periodic prevents the set $\{T^{-t\vec{v}_{\pmb{\ell}}}\vartheta : t \in \bb{Z}_+\}$ from having a\linebreak fully periodic accumulation point. Let \(z_{per}\) be an accumulation point of $\{T^{-t\vec{v}_{\pmb{\ell}}}\vartheta : t \in \bb{Z}_+\}$. As \(\vartheta\sob{\hat{A}^{(\epsilon)}_{\infty}}\) is periodic with period parallel to \(\pmb{\ell}\), then \(z_{per}\sob{\mathcal{H}(\pmb{\ell}^{(-)})}\) is periodic with period parallel to \(\pmb{\ell}\). Thus, Proposition \ref{prop_dec_period_half_plane} implies that $z_{per}$ is periodic with period parallel to \(\pmb{\ell}\), but not fully periodic, which means that $\ell \in \nexpl(z_{per}) \subset \nexpl(\vartheta)$. Therefore, from Claim \ref{nonexforbothorient_case2} we conclude that $\ell,\ell' \in \nexpl(\vartheta)$, which proves the claim.\medbreak

\begin{claim}\label{Jmaximal}
We have $J = \iota+m-1$.
\end{claim}

Indeed, suppose, by contradiction, that $J < \iota+m-1$. Since \(\vartheta \in X_{\eta}\), then $(X^{h_1}-1) \cdots (X^{h_m}-1)$ annihilates \(\vartheta\). So let \(\vartheta = \vartheta_1+\cdots+\vartheta_m\) be a \(\bb{Z}\)-periodic decomposition, where \(h_i \in \bb{Z}^2\) is a period some \(\eta_i\), with \(1 \leq i \leq m\) (see Theorem~\ref{theorKS}). Since, by assumption \(\eta\) is non-periodic and by Claim~\ref{nonperiodic_case2} \(\vartheta\) is non-periodic and all non-periodic configurations in \(X_{\eta}\) have the same order, then \(\vartheta = \vartheta_1+\cdots+\vartheta_m\) is a \(\bb{Z}\)-minimal periodic decomposition. Thus, as $\vartheta\sob{\mathcal{K}}$ is fully periodic and \(\vec{v}_{\pmb{\ell}_{\iota+m-1}}\) points inward to the $(\pmb{\ell},\pmb{\ell}_J)$-region \(\mathcal{K}\), the set $\{T^{t\vec{v}_{\pmb{\ell}_{\iota+m-1}}}\vartheta : t \in \bb{Z}_+\}$ has a fully periodic accumulation point, which due to Lemma~\ref{lem_no_fullyperiodicaccpoint} is a contradiction and establishes the claim.\medbreak

Finally, from Claims \ref{nonexforbothorient_case2} and \ref{Jmaximal} we get that \[-\pmb{\ell}_{\iota+m-1},\pmb{\ell}_{\iota+m-1} \in \nexpd(\vartheta) \subset \nexpd(\eta),\] which establishes Lemma \ref{mainlemmamainthm} for Case 2.

The case $|\mathcal{S} \cap -\pmb{\ell}_{\mathcal{S}}| < |\mathcal{S} \cap \pmb{\ell}_{\mathcal{S}}|$ follows from the previous one by considering the enumeration $\pmb{\hat{\ell}}_1,\ldots,\pmb{\hat{\ell}}_{2m}$ where \(\pmb{\hat{\ell}}_{\iota} = -\pmb{\ell}_{\iota}\). 
\end{proof}

Finally, by applying the previous lemma successively we conclude the proof of Theorem \ref{main_thm}. Indeed, by assumption \(\eta\) is non-periodic and so not fully periodic. Then, according to Theorem \ref{secondary_thm}, there exists a line \(\ell \in \nexpl(\eta)\) such that \(-\pmb{\ell},\pmb{\ell} \in \nexpd(\eta)\). Due to Lemma~\ref{lem_periods_annihilator}, we know that \(\ell\) contains some vector \(h_i\), which means that \(\pmb{\ell} = \pmb{\ell}_{\iota}\) for some \(1 \leq \iota \leq 2m\). 

By Lemma \ref{mainlemmamainthm} applied successively, we have that \[\pmb{\ell}_{\iota+m-1}, \pmb{\ell}_{\iota+2m-2}, \pmb{\ell}_{\iota+3m-3}, \ldots, \pmb{\ell}_{\iota+(m-1)m-m+1} \in \nexpd(\eta)\] and \[-\pmb{\ell}_{\iota+m-1}, -\pmb{\ell}_{\iota+2m-2}, -\pmb{\ell}_{\iota+3m-3}, \ldots, -\pmb{\ell}_{\iota+(m-1)m-m+1} \in \nexpd(\eta),\] where indices are taken modulo $2m$. However, since $\pmb{\ell}_{\iota+jm-j} =  \pmb{\ell}_{\iota+m-j}$ if $j$ is odd and $-\pmb{\ell}_{\iota+jm-j} = \pmb{\ell}_{\iota+m-j}$ if $j$ is even, then \[\pmb{\ell}_{\iota+1}, \pmb{\ell}_{\iota+2}, \ldots, \pmb{\ell}_{\iota+m-2}, \pmb{\ell}_{\iota+m-1} \in \nexpd(\eta).\] Similarly, since $-\pmb{\ell}_{\iota+jm-j} = -\pmb{\ell}_{\iota+m-j}$ if $j$ is odd and $\pmb{\ell}_{\iota+jm-j} = -\pmb{\ell}_{\iota+m-j}$ if $j$ is even, then \[-\pmb{\ell}_{\iota+1}, -\pmb{\ell}_{\iota+2}, \ldots, -\pmb{\ell}_{\iota+m-2}, -\pmb{\ell}_{\iota+m-1} \in \nexpd(\eta),\] which establishes Theorem~\ref{main_thm}.
\hfill $\Box$\medbreak

\vspace{5pt}

The next result is an immediate corollary from Theorems~\ref{main_thm} and \ref{main_theor_szabados}. 

\begin{corollary}\label{SzabadosConjem=3}
Let $\eta \in \mathcal{A}^{\bb{Z}^2}$, with $\mathcal{A} \subset \bb{Z}$, be a low convex complexity configuration. If $\eta = \eta_1+\eta_2+\eta_3$ is a $\bb{Z}$-minimal periodic decomposition, then
\begin{enumerate}[(i)]\setlength{\itemsep}{6pt}
	\item Szabados's conjecture holds;
	\item $\ell \in \nexpl(\eta)$ if and only if the antiparallel oriented lines $-\pmb{\ell},\pmb{\ell} \in \nexpd(\eta)$.
\end{enumerate} 
\end{corollary}


\begin{thebibliography}{9}
	
	\bibitem{boyle} Boyle, M., Lind, D.: Expansive Subdynamics. Trans. Amer. Math. Soc. {\bf 349}(1),  55-102 (1997). 
	
	\bibitem{cassaigne1} Cassaigne, J.: Subword Complexity and Periodicity in Two or More Dimensions. Developments in Language Theory. Foundations, Applications and perspectives $(DLT'99)$, Aachen, Germany, World Scientific, Singapore, 14-21 (2000).
	
	\bibitem{julien06} Cassaigne, J.: A Counter-example to a Conjecture of Lagarias and Pleasants (2006).
	
	\bibitem{colle} Colle, C. F., Garibaldi, E.: An Alphabetical Approach to Nivat's Conjecture. Nonlinearity {\bf 33}, 3620-3652 (2020).
	
	\bibitem{van1} Cyr, V., Kra, B.: Complexity of Short Rectangles and Periodicity. European J. of Combin. {\bf 52}, 146-173 (2016).
	
	\bibitem{van} Cyr, V., Kra, B.: Nonexpansive $\mathbb{Z}^2$-Subdynamics and $\textrm{Nivat's}$ Conjecture. Trans. Amer. Math. Soc. {\bf 367}, 6487-6537 (2015).
	
	\bibitem{durand} Durand, F., Rigo, M.: Multidimensional Extension of the $\textrm{MorseHedlund}$ Theorem. European J. of Combin. {\bf 34}, 391-409 (2013).
	
	\bibitem{epifanio} Epifanio, C., Koskas, M., Mignosi, F.: On a Conjecture on Bidimensional Words. Theor. Comput. Sci. {\bf 299}, 123-150 (2003).
	
	
	\bibitem{frankskra} Franks, J, Kra, B.: Polygonal $Z^2$-subshifts.  Proc. Lond. Math. Soc. {\bf 121}, 252-286 (2020).
	
	\bibitem{Moutot} Kari, J., Moutot, E.: Nivat's Conjecture and Pattern Complexity in Algebraic Subshifts. Theor. Comput. Sci. {\bf 777}, 379-386 (2019). 
	
	\bibitem{Moutot19} Kari, J., Moutot, E.: Decidability and Periodicity of Low Complexity Tilings. 37th International Symposium on Theoretical Aspects of Computer Science (STACS 2020) {\bf 154}, 14:1--14:12 (2020).
	
	\bibitem{Moutot21} Kari, J., Moutot, E.: Decidability and Periodicity of Low Complexity Tilings. Theory of Computing Systems {\bf }, \url{https://doi.org/10.1007/s00224-021-10063-8} (2021).
	
	\bibitem{kari} Kari, J., Szabados, M.: An Algebraic Geometric Approach to $\textrm{Nivat'}$s Conjecture. In Automata, Languages, and Programming - 42nd International Colloquium, ICALP 2015, Kyoto, Japan, Proceedings, Part II, 273–285 (2015).
	
	\bibitem{KariSzabados} Kari, J., Szabados, M.: An Algebraic Geometric Approach to $\textrm{Nivat'}$s Conjecture. Information and Computation {\bf 271}, 104-481 (2020). 
	
	\bibitem{hedlund} Morse, M., Hedlund, G. A.: Symbolic Dynamics. Amer. J. Math. {\bf 60}, 815-866 (1938).
	
	
	\bibitem{nivat} Nivat, M.: Invited Talk at ICALP. Bologna (1997).
	
	\bibitem{quas} Quas, A., Zamboni, L.: Periodicity and Local Complexity. Theor. Comput. Sci. {\bf 319}, 229-240 (2004).
	
	\bibitem{tijdeman} Sander, J., Tijdeman, R.: The Rectangle Complexity of Functions on TwoDimensional Lattices. Theor. Comp. Sci. {\bf 270}, 857-863 (2002).

	\bibitem{sander} Sander, J., Tijdeman, R.: The Complexity Function on Lattices. Theor. Comput. Sci. {\bf 246}, 195-225 (2000).
	
	\bibitem{Szabados} Szabados, M.: Nivat's Conjecture holds for Sums of two Periodic Configurations. SOFSEM 2018: Theory and Practice of Computer Science, 539-551 (2018).
	
	\bibitem{Szabadosthesis} Szabados, M.:. An algebraic approach to Nivat's Conjecture. Ph.D. thesis, University of Turku (2018).
\end{thebibliography}
\end{document}